\documentclass[12pt]{amsart}
\usepackage{latexsym, amsmath, amssymb, amsthm}
\usepackage{bm, mathrsfs, euscript}
\usepackage{pstricks,pst-node,pst-text,pst-tree}
\usepackage[colorlinks=true,linkcolor=blue,citecolor=red]{hyperref}
\usepackage[shortalphabetic]{amsrefs}
\usepackage[centering, includeheadfoot, includehead, 
  hmargin=1.1in, vmargin=1in]{geometry}
\newcommand{\define}[1]{{\bfseries\itshape #1}}
\newtheorem{lemma}{Lemma}[section]
\newtheorem{theorem}[lemma]{Theorem}
\newtheorem{corollary}[lemma]{Corollary}
\newtheorem{proposition}[lemma]{Proposition}
\theoremstyle{definition}
\newtheorem{example}[lemma]{Example}
\newtheorem{remark}[lemma]{Remark}
\newtheorem*{acknowledgements}{Acknowledgements}
\renewcommand{\theequation}%
{\arabic{section}.\arabic{lemma}.\arabic{equation}}
\renewcommand{\thefigure}%
{\arabic{section}.\arabic{lemma}.\arabic{equation}}
\renewcommand{\geq}{\geqslant}
\renewcommand{\leq}{\leqslant}
\newcommand{\kk}{\ensuremath{\Bbbk}} 
\renewcommand{\AA}{\ensuremath{\mathbb{A}}} 
\newcommand{\NN}{\ensuremath{\mathbb{N}}} 
\newcommand{\PP}{\ensuremath{\mathbb{P}}} 
\newcommand{\ZZ}{\ensuremath{\mathbb{Z}}} 
\newcommand{\lex}{\ensuremath{>_{\!\! +}^{\,}}} 
\newcommand{\xel}{\ensuremath{>_{\!\! -}^{\,}}} 
\newcommand{\order}{\ensuremath{>_{\! I}^{\,}}} 
\DeclareMathOperator{\Hilb}{Hilb}
\DeclareMathOperator{\bHilb}{\textbf{Hilb}}
\DeclareMathOperator{\Hom}{Hom}

\DeclareMathOperator{\initial}{in}
\DeclareMathOperator{\codim}{codim}
\DeclareMathOperator{\rank}{rank}
\DeclareMathOperator{\Spec}{Spec}


\begin{document}

\title[Smooth and irreducible multigraded Hilbert schemes]{Smooth and
  Irreducible Multigraded Hilbert Schemes}

\author[D.~Maclagan]{Diane Maclagan}
\address{Mathematics Institute\\ Zeeman Building\\ University of
  Warwick\\ Coventry\\ CV4 7AL\\ United Kingdom}
\email{D.Maclagan@warwick.ac.uk}

\author[G.G.~Smith]{Gregory G. Smith} 
\address{Department of Mathematics \& Statistics \\ Queen's
  University \\ Kingston \\ ON \\ K7L 3N6\\ Canada}
\email{ggsmith@mast.queensu.ca}


\subjclass[2000]{Primary 14C05; Secondary 13A02, 05E99}
\keywords{Hilbert schemes, multigraded rings, combinatorial
commutative algebra}

\begin{abstract}
  The multigraded Hilbert scheme parametrizes all homogeneous ideals
  in a polynomial ring graded by an abelian group with a fixed Hilbert
  function.  We prove that any multigraded Hilbert scheme is smooth
  and irreducible when the polynomial ring is $\ZZ[x,y]$, which
  establishes a conjecture of Haiman and Sturmfels.
\end{abstract}

\maketitle

\section{Introduction}
\label{s:intro}

Hilbert schemes are the fundamental parameter spaces in algebraic
geometry.  Multigraded Hilbert schemes, introduced in
\cite{HaimanSturmfels}, consolidate many types of Hilbert schemes
including Hilbert schemes of points in affine space, toric Hilbert
schemes, $G$-Hilbert schemes for abelian $G$, and the original
Grothendieck Hilbert scheme.  The collection of all multigraded
Hilbert schemes contains many well-documented pathologies.  In
contrast, this paper identifies a surprisingly large subcollection of
multigraded Hilbert schemes that are both smooth and irreducible.

To be more explicit, let $S$ be a polynomial ring over $\ZZ$ that is
graded by an abelian group $A$.  A homogeneous ideal $I \subseteq S$
is admissible if, for all $a \in A$, the $\ZZ$-module $(S/I)_a =
S_a/I_a$ is a locally free with constant finite rank on $\Spec(\ZZ)$.
The Hilbert function $h_{S/I} \colon A \to \NN$ is defined by
$h_{S/I}(a) := \rank_\ZZ (S/I)_a$.  Given $h \colon A \to \NN$,
Theorem~1.1 of \cite{HaimanSturmfels} shows that there is a
quasiprojective scheme $\Hilb_S^h$ parametrizing all admissible
$S$-ideals with Hilbert function $h$.  Our main result is:

\begin{theorem}
  \label{t:main}
  If $S = \ZZ[x,y]$ is graded by an abelian group $A$, then for any
  function $h \colon A \to \NN$ the multigraded Hilbert scheme
  $\Hilb_S^h$ is smooth and irreducible.
\end{theorem}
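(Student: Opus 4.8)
The plan is to prove smoothness first and then deduce irreducibility from smoothness together with connectedness, using the elementary fact that a scheme which is smooth over $\ZZ$ (hence regular, hence with integral local rings) and connected is automatically irreducible. The first step is to normalize the grading. Since the $A$-grading on $S=\ZZ[x,y]$ is completely determined by the pair $(\deg x,\deg y)$, only the subgroup of $A$ generated by these two elements is relevant, and replacing $A$ by that subgroup --- necessarily a quotient $\ZZ^2/L$ of $\ZZ^2$ --- changes neither the functor being represented nor $\Hilb_S^h$. I would then organize the argument by $\rank L\in\{0,1,2\}$. When $\rank L=0$ the grading is the fine grading: each $S_a$ has rank at most one, every admissible homogeneous ideal is a monomial ideal, there is at most one with a given Hilbert function, and a one-line computation of $\Hom_S(M,S/M)_0$ shows its tangent space vanishes, so $\Hilb_S^h$ is empty or a reduced point. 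The cases $\rank L=1$ (which includes the usual grading $\deg x=\deg y$) and $\rank L=2$ ($A$ finite) are the substantive ones.

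For smoothness I would use the affine open cover of $\Hilb_S^h$ coming from the Haiman--Sturmfels construction: for each monomial ideal $M$ with Hilbert function $h$ there is a chart parametrizing those admissible $I$ whose standard monomials (the monomials not in $M$) restrict to a $\ZZ$-basis of $S/I$ in every degree, and such an $I$ is recorded by the coefficients expressing each nonstandard monomial as a combination of standard ones, subject to the relations which say that the resulting family of subspaces $V_a\subseteq S_a$ is closed under multiplication by $x$ and by $y$ (equivalently, that the two multiplication operators on $S/I$ commute). The key claim --- and the only place the hypothesis of exactly two variables is really used --- is that every such chart is isomorphic to an affine space $\AA^N_\ZZ$. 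I expect this to come out of the exceptionally simple shape of the minimal free resolution of a monomial ideal in $\ZZ[x,y]$: it has length one, and the entries of its syzygy matrix are monomials determined by the consecutive corners of the staircase of $M$. Ordering the generators of $M$ and the standard monomials along the staircase, one should be able to propagate the multiplication maps $\cdot x$ and $\cdot y$ across the staircase, the relations letting one solve, with no denominators, for all but a distinguished free subset of the coefficients in terms of the remaining ones; the chart is then the polynomial ring over $\ZZ$ in that free subset. Granting this, $\Hilb_S^h$ is covered by smooth affines, so it is smooth over $\ZZ$.

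For connectedness I would proceed in two moves. First, a Gröbner (weight) degeneration carries an arbitrary admissible $I$, flatly and inside $\Hilb_S^h$, to its initial monomial ideal, which is automatically $A$-homogeneous and has the same Hilbert function $h$; hence every point of $\Hilb_S^h$ lies in the component of some monomial ideal. Second, I would connect any two monomial ideals of $\ZZ[x,y]$ with Hilbert function $h$ by a chain of elementary moves, where sliding one box of the staircase past a neighbour is realized by an explicit one-parameter flat family of admissible ideals, all still with Hilbert function $h$; in two variables such a move always exists and always preserves $h$, which is again where the hypothesis enters. Together these show $\Hilb_S^h$ is connected, and combined with smoothness this gives irreducibility, proving Theorem~\ref{t:main}.

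The main obstacle is the middle step: showing that every Haiman--Sturmfels chart for $\ZZ[x,y]$ is literally an affine space over $\ZZ$. The reductions on $A$, the degeneration argument, and the passage from ``smooth and connected'' to ``irreducible'' are essentially formal, but the chart computation is where the two-variable geometry has to be exploited in earnest --- one must set up the combinatorics of staircases, the induced orderings, and the multiplication coefficients so that the defining relations can be solved recursively and integrally, uniformly over $\Spec\ZZ$ including at every prime $p$. That bookkeeping is the technical heart of the proof.
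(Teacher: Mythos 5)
There is a genuine gap at what you yourself identify as the heart of your argument: the claim that every Haiman--Sturmfels chart of $\Hilb_S^h$ around a monomial ideal is isomorphic to an affine space over $\ZZ$ is asserted, not proved, and nothing in your sketch makes it plausible that the defining relations of a chart (which are quadratic in the coefficients recording how nonstandard monomials are rewritten in terms of standard ones, coming from commutativity of multiplication by $x$ and $y$) can be solved recursively, integrally, for all but a free subset of the variables. This is a far stronger statement than what the paper proves: the paper never shows any chart is an affine space, and only establishes smoothness at the single lex-most point by explicitly constructing a $d$-parameter family (Proposition~\ref{p:lexmostsmooth}), then propagates nonsingularity to the other monomial points using the fact that the tangent space dimension equals the number of significant arrows at every point of the edge-ideal curves (Theorem~\ref{t:tangentBasis}) together with semicontinuity. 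Even formulating your charts is delicate in the nonpositive gradings the theorem allows (e.g.\ $\deg x=1$, $\deg y=-1$, or torsion degrees), where a single degree contains infinitely many monomials, so ``the coefficients expressing each nonstandard monomial'' is an infinite list and one needs the supportive-set finiteness of Haiman--Sturmfels before the chart is even a finite-type object. Your proposed case division by $\rank L$ does not remove this difficulty.

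The connectedness step has a second, independent gap. ``Sliding one box of the staircase past a neighbour'' is not shown to be realizable by an $A$-homogeneous flat family with the same Hilbert function, and in general a one-box move is not: the degenerations the paper actually uses are the binomial edge ideals $I_\alpha(t)$ of \eqref{eq:edge}, whose two initial ideals differ by moving a whole translate of part of the staircase, not a single box, and whose existence requires a positive significant arrow. Showing that any two monomial ideals with Hilbert function $h$ are linked by a chain of such moves is exactly the nontrivial combinatorial content of \S\ref{s:connected} --- the partial order $\succcurlyeq$ and the existence and uniqueness of the lex-most ideal (Proposition~\ref{p:lexmost}) --- none of which is supplied or replaced by your ``such a move always exists'' assertion. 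Finally, when $h$ has infinite support the staircases are infinite and your box-moving picture does not literally apply; the paper first reduces to finite colength via the factorization Theorem~\ref{t:factoring}, a reduction your outline omits. The formal endgame (smooth over $\ZZ$ plus connected implies irreducible) is fine, but the two substantive steps it rests on are not established.
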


\noindent
This theorem proves the conjecture in
\cite{HaimanSturmfels}*{Example~1.3} and
\cite{MillerSturmfels}*{Conjecture~18.46}.  Since $\Spec(\ZZ)$ is the
terminal object in the category of schemes, the theorem also extends,
via base change, to the category of $B$-schemes where $B$ is any
irreducible scheme.

The hypothesis that $S$ has two variables is essential in
Theorem~\ref{t:main}.  Example~1.4 of \cite{HaimanSturmfels}
demonstrates that multigraded Hilbert schemes can be reducible when
$S$ has three variables.  Even if one restricts to the standard
$\ZZ$-grading, Theorem~1.2 of \cite{CEVV} shows that irreducibility
fails; this also shows that the corank two result for toric Hilbert
schemes \cite{MaclaganThomas}*{Theorem~1.1} does not extend to all
multigraded Hilbert schemes.  Remarkably, especially when compared
with the connectedness of the Grothendieck Hilbert scheme
\cite{HartshorneThesis}*{Corollary~5.9}, Theorem~1 of \cite{Santos}
shows that multigraded Hilbert schemes can be disconnected.  This
evidence indicates that irreducibility of $\Hilb_S^h$ is rather
exceptional.

Similarly, one does not expect a general multigraded Hilbert scheme
$\Hilb_S^h$ to be smooth.  Indeed, the philosophy in
\cite{Vakil}*{\S1.2} suggests that most multigraded Hilbert schemes
contain complicated singularities.  For example, Theorem~1.1 of
\cite{Vakil} establishes that every singularity type of finite type
over $\Spec(\ZZ)$ appears on some $\Hilb_S^h$ when $S$ has at least
five variables.  With this in mind, Theorem~\ref{t:main} provides a
surprisingly comprehensive, but certainly not exhaustive, class of
smooth and irreducible multigraded Hilbert schemes.

We were particularly inspired by \cite{Evain}, although each basic
step in the proof of Theorem~\ref{t:main} has a counterpart in at
least one of the following papers: \cites{HartshorneThesis, Fogarty,
  Iarrobino, Reeves, Pardue, HaimanCatalan, Mall, Huibregtse,
MaclaganThomas, PeevaStillmanConnected, Fumasoli}.  The basic steps in
the proof are:
\begin{itemize}
\item[\textbf{(i)}] We prove that either $\Hilb_S^h \cong \PP^m \times
  \Hilb_S^{h'}$ or $\Hilb_S^h \cong \AA^m \times \Hilb_S^{h'}$ where
  $\Hilb_S^{h'}$ parametrizes ideals with codimension greater than
  one.
\item[\textbf{(ii)}] We identify a distinguished point on $\Hilb_S^h$
  and connect each point to this distinguished point by a rational
  curve.
\item[\textbf{(iii)}] We establish that the dimension of the tangent
  space is constant along these rational curves.
\item[\textbf{(iv)}] We show that the distinguished point on
  $\Hilb_S^h$ is nonsingular.
\end{itemize}
In all four steps, the combinatorial structure of the arguments allows
us to work over an arbitrary field $\kk$, so we are able lift our
results to multigraded Hilbert schemes over $\ZZ$.

\vspace*{1.2em}

The first step, which appears in \S2, shows that the multigraded
Hilbert scheme $\Hilb_S^h$ parametrizing codimension-one ideals
naturally splits into a product of a multigraded Hilbert scheme
parametrizing equidimensional ideals of codimension one and a
multigraded Hilbert scheme parametrizing ideals of higher codimension.
This is tantamount to proving that there exists a functorial
homogeneous factorization of the ideals with Hilbert function $h
\colon A \to \NN$.  Among the papers listed above, only
\cite{Fogarty}*{\S1} solves an analogous problem.  Nevertheless, our
factorization is striking because the primary decomposition of an
ideal needed not be homogeneous when the grading group $A$ has
torsion; see \cite{MillerSturmfels}*{Example~8.10}.  We establish this
decomposition when $S$ is a polynomial ring over $\kk$ with an
arbitrary number of variables.  In the two variable case it plays a
crucial role by reducing the proof of Theorem~\ref{t:main} to the
study of schemes $\Hilb_S^h$ parametrizing ideals with finite
colength.  As a result, the remaining three steps assume that $S =
\kk[x,y]$ and $h \colon A \to \NN$ has finite support.

In \S3, we distinguish a point on $\Hilb_S^h$ by imposing a partial
order on the set of all monomial ideals with Hilbert function $h
\colon A \to \NN$.  The distinguished point corresponds to the maximum
element in this poset, which we call the lex-most ideal.  In the
standard $\ZZ$-grading, the lex-most ideal coincides with the
lex-segment ideal and corresponds to the lexicographic point on the
Hilbert scheme.  The larger class of lex-most ideals is required
because lex-segment ideals do not necessarily exist for a general
$A$-grading; see Example~\ref{e:nonlex}.  In contrast with the
standard-graded case, a lex-most ideal may not have extremal Betti
numbers among all ideals with a given Hilbert function; see
Example~\ref{e:nonExtremalBetti}.  The uniqueness of the lex-most
ideal is the most novel aspect of the second step.

To complete the second step, we exhibit a chain of irreducible
rational curves connecting each point on $\Hilb_S^h$ to the
distinguished point.  Each curve comes from the Gr\"obner degenerations
of a binomial ideal.  The binomial ideals, which are edge ideals in
the sense of \cite{AltmannSturmfels}, arise from certain tangent
directions.  To designate a tangent direction, we use a combinatorial
model for the tangent space to $\Hilb_S^h$ at a point corresponding to
a monomial ideal.  Our model extends the ``cleft-couples'' in
\cite{Evain}*{\S2} and generalizes the ``arrows'' in
\cite{HaimanCatalan}*{\S2}.  Unlike \cite{Mall} and
\cite{PeevaStillmanConnected}, we cannot restrict to Borel-fixed
ideals because such ideals do not exist for arbitrary gradings.  This
approach has the advantage of proving that $\Hilb_S^h$ is rationally
chain connected.

The third step, found in \S4, identifies the tangent space to
$\Hilb_S^h$ at each point along these rational curves with a linear
subvariety of affine space.  Finding the dimension of the tangent
space is thereby equivalent to computing the rank of an explicit
system of linear equations.  Despite the conceptual simplicity, the
inevitable combinatorial analysis is rather intricate.  If we were
working over an algebraically closed field of characteristic zero,
then we could bypass this step by combining \cite{Iversen} and
\cite{Fogarty}*{Theorem~2.4}.  Dealing with an explicit system of
equations remarkably yields a higher level of generality.

For the fourth and final step, we demonstrate that the point on
$\Hilb_S^h$ corresponding the the lex-most ideal is nonsingular.  This
superficially resembles the smoothness of the lexicographic point in
the original Grothendieck Hilbert scheme; see
~\cite{ReevesStillman}*{Theorem~1.4}.  From the previous step we know
the dimension of the tangent space to $\Hilb_S^h$ at the distinguished
point.  To show that $\Hilb_S^h$ has the correct dimension at this
point, it suffices to embed an affine space of the correct dimension
into a neighborhood of the distinguished point.  Following
\cite{Evain}*{Proposition~10}, we achieve this in \S5 by building an
appropriate ideal that has the lex-most ideal as an initial ideal.
The last section of the paper also contains the proof of
Theorem~\ref{t:main}.

Earlier work on the geometry of multigraded Hilbert schemes
$\Hilb_S^h$ restricted either the possible grading groups $A$ or the
possible Hilbert functions $h \colon \NN \to A$.  In contrast,
Theorem~\ref{t:main} limits only the number of variables.  Indeed, our
set-up deliberately includes gradings, called
nonpositive~\cite{MillerSturmfels}*{Definition 8.7}, of $S$ for which
the grading group $A$ has torsion or $\rank_\ZZ S_a = \infty$ for some
$a \in A$.  Unsurprisingly, the nonpositive gradings are the primary
source of technical challenges.  In fact, all four steps would be
substantially easier if one excluded these cases.

Our success within this general framework leads to new questions: Can
one characterize a larger collection of connected multigraded Hilbert
schemes?  When the polynomial ring $S$ has more than two variables,
does there exist a unique lex-most ideal?  Do the maximal elements in
the poset of monomial ideals with a given Hilbert function correspond
to a nonsingular points?

\begin{acknowledgements}
  We thank Mark Haiman, Mike Roth, Jason Starr, Bernd Sturmfels and
  Mauricio Velasco for useful conversations.  The computer software
  \emph{Macaulay~2}~\cite{M2} was indispensable for generating
  examples.  The first author was partially supported by NSF grant
  DMS-0500386 and the Warwick North American Travel Fund; the second
  author was partially supported by NSERC.
\end{acknowledgements}

\subsection*{Conventions}
Throughout the paper, $\kk$ is a field and $\NN$ is the set of
nonnegative integers.  We write $\delta_{i,j}$ for the Kronecker
delta: $\delta_{i,j} = 1$ if $i = j$ and $0$ otherwise.  The
lexicographic order on $\kk[x,y]$ with $x > y$ is denoted by $\lex$
and the lexicographic order on $\kk[x,y]$ with $x < y$ is denoted by
$\xel$.  For an ideal $I \subseteq \kk[x,y]$, $\initial_{\lex}(I)$ and
$\initial_{\xel}(I)$ are the initial ideals of $I$ with respect to
$\lex$ and $\xel$.

\section{Factoring Multigraded Hilbert schemes}
\label{s:factoring}

We show in this section that the scheme $\Hilb_S^h$ naturally splits
into a product of a multigraded Hilbert scheme parametrizing
equidimensional codimension-one ideals and a multigraded Hilbert
scheme parametrizing ideals with codimension greater than one.  Let
$\kk$ be a field, let $A$ be an abelian group, and let $S :=
\kk[\bm{x}] = \kk[x_1, \dotsc, x_N]$ be an $A$-graded polynomial ring
with $N \geq 2$.  Unlike the other sections, we do not assume that $N
= 2$ in this section of the paper.  We begin with a description of the
multigraded Hilbert schemes parametrizing principal ideals.

\begin{lemma}
  \label{l:principal}
  Let $f \in S$ be a homogeneous polynomial of degree $d \in A$ such
  that the ideal $I := \langle f \rangle$ is admissible.  If $h \colon
  A \rightarrow \NN$ is the Hilbert function of $S/I$ and $m := h(d)$,
  then
  \[
  \Hilb_S^h \cong 
  \begin{cases}
    \PP^m & \text{if $\dim_{\kk} S_0 < \infty$,} \\
    \AA^m & \text{if $\dim_{\kk} S_0 = \infty$.}
  \end{cases}
  \]
\end{lemma}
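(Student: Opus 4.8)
The plan is to set up a bijection between admissible ideals $J \subseteq S$ with Hilbert function $h$ and points of the indicated projective or affine space, and to check it is an isomorphism of functors (equivalently, works in families over an arbitrary base). First I would understand the combinatorics of $h$: since $I = \langle f \rangle$ is admissible, multiplication by $f$ gives a degree-$d$ isomorphism $S_a \xrightarrow{\sim} (fS)_{a+d}$ of free $\ZZ$-modules (or $\kk$-vector spaces) for every $a$, so $h(a) = \dim_\kk S_a - \dim_\kk S_{a-d}$ whenever this is finite, and in particular $h(d) = \dim_\kk S_d - \dim_\kk S_0 = m$. The key structural claim is that every admissible ideal $J$ with Hilbert function $h$ is again principal, generated by a homogeneous degree-$d$ polynomial $g$ with the same "leading behaviour'' as $f$; concretely, $J_d$ is a rank-one free summand of $S_d$, pick a generator $g$, and show $\langle g\rangle$ already has Hilbert function $h$, hence equals $J$ since $\langle g\rangle \subseteq J$ and both quotients are locally free of the same (finite) rank in every degree. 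This reduces the parameter problem to: choose the line $\kk g \subseteq S_d$, subject to $g$ lying in the "right'' stratum.

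Next I would pin down that stratum. Write $S_d = W \oplus (fS_0 \cdot \text{nothing})$... more precisely, the condition that $\langle g \rangle$ has Hilbert function $h$ forces, in each degree $a$, that $gS_{a-d}$ is a rank-$(h_S(a-d))$ summand, and a short argument shows this holds iff $g = f + (\text{lower terms})$ is a unit multiple of $f$ away from the degree-$0$ part — equivalently $g \in f \cdot S_0^{\times} + (\text{something})$. The cleanest formulation: the set of valid $g$ up to scalar is the affine space $f + f \cdot \mathfrak{m}_0$ quotiented appropriately, where $\mathfrak{m}_0 \subseteq S_0$ is the set of non-units; but since $S_0$ is itself a polynomial subring (spanned by monomials of degree $0 \in A$), $S_0^{\times} = \kk^{\times}$ exactly when $\dim_\kk S_0 < \infty$ (forcing $S_0 = \kk$), and otherwise $S_0$ contains a variable, hence nonconstant units are absent but the scaling ambiguity disappears in a different way. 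This is the point where the two cases of the Lemma split, and I expect it to be the main obstacle: carefully arguing that when $\dim_\kk S_0 = \infty$ one cannot rescale $f$ by a degree-$0$ unit (so the family of ideals is cut out by the coefficients of a genuine affine chart, giving $\AA^m$), whereas when $S_0 = \kk$ the overall scaling is a $\kk^\times$-action and one lands in $\PP^m$.

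Finally I would promote the bijection to an isomorphism of schemes by exhibiting it functorially: over any ring $R$, an admissible family is a principal ideal generated by $g \in (S \otimes R)_d$ that is "$f$ up to a unit and lower-order corrections,'' and the corrections are parametrized by $m$ free coordinates (the $m = h(d)$ is exactly $\dim_\kk S_d - \dim_\kk S_0$, the number of monomials in $S_d$ not of the form $f \cdot (\text{degree-}0\text{ monomial})$). Matching this with the universal family on $\PP^m$ (respectively $\AA^m$) and invoking the universal property of $\Hilb_S^h$ from \cite{HaimanSturmfels}*{Theorem~1.1} gives the isomorphism. I would also record that $\Hilb_S^h$ is nonempty precisely because $\langle f \rangle$ itself is such an ideal, so the scheme is at least the stated dimension, and the tangent-space computation (or the explicit chart) shows it is smooth of exactly that dimension, completing the identification.
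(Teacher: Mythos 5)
Your first case ($\dim_{\kk} S_0 < \infty$, so that $S_0 = \kk$) is essentially the paper's argument and is fine in outline: there $J_d$ really is one-dimensional, any nonzero $g \in J_d$ is a nonzerodivisor, $\langle g \rangle$ has Hilbert function $h$, hence $J = \langle g \rangle$, and $\Hilb_S^h$ becomes the space of lines in $S_d$, i.e.\ $\PP^m$; the scheme-theoretic identification is then handled, as in the paper, by noting that $\{d\}$ is very supportive in the sense of Haiman--Sturmfels.

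The case $\dim_{\kk} S_0 = \infty$ is where the real content lies, and there your sketch has genuine gaps. First, $J_d$ is \emph{not} a rank-one summand of $S_d$: with $\deg(x)=1$, $\deg(y)=-1$ and $J = \langle x^4y^2 \rangle$, the piece $J_2 = x^4y^2\,\kk[xy]$ is infinite-dimensional over $\kk$; at best $J_d$ is a cyclic $S_0$-module, and even that needs the structure of $S_0$. Second, you assert without proof that $S_0$ is a polynomial subring; since the lemma lives in the $N$-variable setting of \S2 this is exactly the crux of the paper's proof: admissibility of $\langle f \rangle$ forces all but finitely many degree-zero monomials to be divisible by $\initial(f)$, so $S_0$ has Krull dimension one, and normality of the semigroup ring then yields $S_0 = \kk[\bm{x}^{\bm{v}}]$ and $S_d = \bm{x}^{\bm{w}}\kk[\bm{x}^{\bm{v}}]$ --- none of which your argument supplies. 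Third, your description of the valid generators is incorrect: the locus is not ``$f$ times a unit plus something'' nor $f + f\cdot\mathfrak{m}_0$; for instance $\langle f(1+\bm{x}^{\bm{v}}) \rangle$ has a different Hilbert function from $\langle f \rangle$ (in the example above, $\langle x^4y^2(1+xy) \rangle$ has value $3$ rather than $2$ in degree $2$). The correct family is $\langle \bm{x}^{\bm{u}} + c_1 \bm{x}^{\bm{u}-\bm{v}} + \dotsb + c_m \bm{x}^{\bm{u}-m\bm{v}} \rangle$, with corrections going \emph{down} by multiples of $\bm{v}$, not up into $f \cdot \mathfrak{m}_0$. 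Relatedly, your explanation of the $\PP^m$ versus $\AA^m$ dichotomy via degree-zero units is off: $S_0^{\times} = \kk^{\times}$ in both cases; the reason one gets $\AA^m$ is that the Hilbert function forces the coefficient of the top monomial $\bm{x}^{\bm{u}}$ to be nonzero, so after rescaling the generator is monic and one lands in a single affine chart. Finally, passing from this pointwise classification to an isomorphism of schemes still requires an argument over an arbitrary base (the paper again gets this from $\{d\}$ being very supportive), which your sketch presupposes rather than proves.
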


\begin{proof}
  To begin, assume that $\dim_{\kk} S_0 < \infty$.  By
  \cite{MillerSturmfels}*{Theorem~8.6}, we have $\dim_{\kk} S_a <
  \infty$ for all $a \in A$, and $S_0 = \kk$.  Thus the Hilbert
  function $h_S \colon A \rightarrow \mathbb N$ given by $h_S (a) =
  \dim_{\kk} S_a$ is well-defined.  Multiplication by $f$ produces the
  short exact sequence
  \[
  0 \to S(-d) \to S \to S/I \to 0 \, ,
  \] 
  which shows that $h(a) = h_S(a) - h_S(a-d)$.  Since $S_0 = \kk$, it
  follows that $h(d) = h_S(d)-1$, so $\dim_\kk(J_d) = 1$ for any ideal
  $J$ with Hilbert function $h \colon A \to \NN$.  Applying this
  analysis to an element $g \in J_d$, we conclude that $J = \langle g
  \rangle$, so all ideals with Hilbert function $h \colon A \to \NN$
  are principal and generated in degree $d$.  Hence, $\Hilb_S^h$
  parametrizes the one-dimensional subspaces of $S_d$; in the language
  of \cite{HaimanSturmfels}*{\S 3}, the set $\{d\}$ is very
  supportive.  Therefore, we have $\Hilb_S^h \cong \PP^m$.

  Secondly, assume that $\dim_{\kk} S_0 = \infty$.  The hypothesis
  that $I$ is admissible places significant restrictions on $S_0$.
  Let $\bm{x}^{\bm{u}}$ be the initial term of $f$ with respect to
  some monomial order on $S$.  By \cite{Eisenbud}*{Theorem~15.3}, the
  monomials not divisible by $\bm{x}^{\bm{u}}$ form a $\kk$-basis for
  $S/I$.  Since $I = \langle f \rangle$ is admissible, all but
  finitely many monomials in $S_0$ are divisible by $\bm{x}^{\bm{u}}$.
  It follows that $S_0$ has a homogeneous system of parameters
  consisting of a single element, so the Krull dimension of $S_0$ is
  $1$.  The ring $S_0$ is a normal semigroup ring by
  \cite{MillerSturmfels}*{page 150}, so we deduce that $S_0 =
  \kk[\bm{x}^{\bm{v}}]$ for some monomial $\bm{x}^{\bm{v}} \in S$.

  We next examine the $S$-module structure of the graded component
  $S_d$.  Let $r \in \NN$ be the largest nonnegative integer with
  $\bm{u} - r \bm{v} \in \NN^N$ and set $\bm{w} := \bm{u} - r \bm{v}$.
  Let $\bm{x}^{\bm{w}'}$ be another monomial of degree $d$.  Since
  $\dim_{\kk} (S/I)_{d} < \infty$, all but finitely many monomials in
  $S_d$ are divisible by $\bm{x}^{\bm{u}}$.  Hence, the monomial
  $\bm{x}^{\bm{w}' + s \bm{v}}$ is divisible by $\bm{x}^{\bm{u}}$ for
  all $s \gg 0$, so for such $s$ we have $\bm{w}'' := (\bm{w}' + s
  \bm{v}) - (\bm{w} + r \bm{v}) = (\bm{w}'-\bm{w})+ (s-r)\bm{v} \in
  \NN^N$ with $\deg(\bm{x}^{\bm{w}''}) = 0$.  Thus $\bm{w}''=\ell
  \bm{v}$ for some $\ell \in \NN$ and so $\bm{w}' - \bm{w}$ is a
  multiple of $\bm{v}$.  By the construction of $\bm{w}$ this multiple
  must be nonnegative, so $S_d$ has $\kk$-basis $\{\bm{x}^{\bm{w} + s
    \bm{v}} : s \in \NN \}$.

  Finally, any monomial ideal with Hilbert function $h$ must contain
  $\bm{x}^{\bm{w} + m \bm{v}}$.  Since $\langle \bm{x}^{\bm{u}}
  \rangle$ has Hilbert function $h$, we have $r = m$ and there is only
  one monomial ideal with this Hilbert function; in the language of
  \cite{HaimanSturmfels}*{\S 3} the set $\{d\}$ is very supportive.
  Therefore, $\Hilb_S^h$ parametrizes the ideals of the form $\langle
  \bm{x}^{\bm{u}} + c_1 \bm{x}^{\bm{u} - \bm{v}} + \dotsb + c_m
  \bm{x}^{\bm{u} - m \bm{v}} \rangle$ where $c_j \in \kk$, so
  $\Hilb_S^h \cong \AA^m$.
\end{proof}

The next lemma contains the necessary algebraic preliminaries for
factoring multigraded Hilbert schemes.  The proof is complicated by
our need to work over polynomial rings with coefficients in an
arbitrary Noetherian $\kk$-algebra.

\begin{lemma}
  \label{l:alg}
  Let $K$ be a Noetherian $\kk$-algebra, let $R := K \otimes_\kk S$ be
  the $A$-graded polynomial ring with coefficients in $K$, and let $I$
  be a $R$-ideal.  If $J$ is the intersection of the codimension-one
  primary components of $I$ and $Q := (I:J)$, then $J$ and $Q$ are
  homogeneous, $J$ is a locally principal $K$-module, and $I = JQ$.
  Moreover, if $\Spec(K)$ is connected and $I$ is admissible, then
  both $J$ and $Q$ are admissible ideals.
\end{lemma}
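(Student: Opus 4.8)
The plan is to localize on $\Spec(K)$ and analyze the codimension-one part of $I$ fibrewise, where the polynomial ring is a unique factorization domain. Each conclusion is local on $\Spec(K)$: if $x=\sum_{a}x_a$ is the $A$-homogeneous decomposition of an element of $J$, then $\{s\in K:sx_a\in J\text{ for all }a\}$ is an ideal of $K$ which, if every $J_{\mathfrak p}$ is homogeneous in $R_{\mathfrak p}$, lies in no prime of $K$ and hence equals $K$; the same works for $Q=(I:J)$ because the colon of finitely generated ideals commutes with localization. That $J$ is locally principal and that $I=JQ$ are local by inspection, and admissibility is checked degree by degree. So I may assume $K$ is local with residue field $\kappa$, recovering the general statement at the end by spreading out from a prime to a Zariski-open of $\Spec(K)$ using that $R=K\otimes_{\kk}S$ is Noetherian.

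\textbf{Homogeneity, local principality, and the factorization.} Over a field $\kk'$ the ring $\kk'[\bm x]$ is a UFD, so the intersection of the codimension-one primary components of a homogeneous ideal is the smallest principal ideal $\langle f\rangle$ containing it; since a greatest common divisor of two $A$-homogeneous elements in a graded UFD can be taken $A$-homogeneous, $f$ is homogeneous and a nonzerodivisor — this is what forces $J$ to be homogeneous even though the individual primary components of $I$ need not be. Over the local base $K$ I would transfer this: using that $R/I$ is locally free (so $I\otimes_K\kappa\hookrightarrow\kappa[\bm x]$) to control the closed fibre and applying Nakayama's lemma in each of the finitely many graded degrees below a generating set of $J$, I obtain a homogeneous nonzerodivisor $f\in R$ with $J=fR$. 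Then $Q=(I:J)=(I:f)$; since $I\subseteq J=fR$, every element of $I$ is uniquely divisible by $f$, so $f\cdot(I:f)=I$, i.e.\ $JQ=I$, and multiplication by $f$ gives a degree-$\deg(f)$ isomorphism $R/Q\xrightarrow{\ \sim\ }J/I$.

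\textbf{Admissibility.} Suppose $\Spec(K)$ is connected and $I$ is admissible, so each $(R/I)_a$ is locally free of finite constant rank. The identity $I=JQ=fQ$ gives in each degree a short exact sequence $0\to(R/Q)_{a-\deg f}\to(R/I)_a\to(R/J)_a\to0$ of finitely generated $K$-modules with locally free middle term, so $(R/J)_a$ and $(R/Q)_a$ have finite rank. The presentation $0\to R(-\deg f)\xrightarrow{\ f\ }R\to R/J\to0$ gives $\operatorname{Tor}_1^K\!\bigl((R/J)_a,\kappa\bigr)=\ker\!\bigl(\kappa[\bm x]_{a-\deg f}\xrightarrow{\,\bar f\,}\kappa[\bm x]_a\bigr)$, which vanishes once $\bar f\neq0$ in $\kappa[\bm x]$; and $\bar f=0$ would make $(R/J)\otimes_K\kappa=\kappa[\bm x]$ a quotient of the degreewise finite-dimensional $(R/I)\otimes_K\kappa$, forcing $I=0$ and $J=R$ — a contradiction. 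Hence $R/J$ is $K$-flat, and then so is $R/Q$ (locally $J/I$ is a shift of $R/Q$, a submodule of the flat module $R/I$ with flat quotient $R/J$). Therefore each $(R/J)_a$ and $(R/Q)_a$ is finitely generated and flat, hence locally free, so its rank is a locally constant — hence, by connectedness, constant — function on $\Spec(K)$: $J$ and $Q$ are admissible.

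\textbf{Main obstacle.} I expect the two genuinely delicate points to be (i) the homogeneity of $J$, false for the individual primary components once $A$ has torsion and recovered only via the intrinsic ``smallest principal overideal'' description together with the behaviour of greatest common divisors in torsion-graded unique factorization domains, and (ii) the passage from a field to an arbitrary Noetherian base $K$, which may be non-reduced and for which $\dim_{\kk}S_a=\infty$ under the nonpositive gradings: here no constant-rank criterion is available, and one must instead propagate $K$-flatness through the exact sequences above. The rest is bookkeeping with colon ideals and graded pieces.
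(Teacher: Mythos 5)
Your admissibility argument and the factorization $I=JQ$ (cancel the nonzerodivisor $f$, use the two short exact sequences, flatness, and connectedness for constant rank) track the paper's proof closely and are fine in substance. The genuine gap is in the homogeneity of $J$, which is the actual content of the lemma. Your entire case for it rests on the assertion that ``a greatest common divisor of two $A$-homogeneous elements in a graded UFD can be taken $A$-homogeneous.'' When $A$ has torsion this is precisely the nontrivial point: irreducible factors of homogeneous elements need not be homogeneous (e.g.\ $x^2-1=(x-1)(x+1)$ for $\deg x=1\in\ZZ/2\ZZ$), so the claim does not follow from any factorization-by-homogeneous-pieces argument, and you give no proof of it — your ``Main obstacle'' paragraph names the difficulty but supplies no mechanism to resolve it. The paper's resolution is its most substantive step: split $A$ into free and torsion parts, handle the free part by Bourbaki, and for $A=\ZZ/m\ZZ$ pass to $R'=\kk'\otimes_\kk R$ with $\kk'$ containing an $m$-th root of unity, use that the automorphism $\phi(x_i)=\omega^{\deg x_i}x_i$ permutes the codimension-one primary components of $I'$ so $\phi(J')=J'$, run the $\omega^a f-\phi(f)$ reduction, and descend via the intrinsic description $J=\{f\in R:\codim(I:f)\geq 2\}=R\cap J'$. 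Crucially this is carried out directly over $R$ for an arbitrary Noetherian $K$, with no fibres and no admissibility hypothesis.

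Your route back from the closed fibre to the local base is also not sound as stated, independently of the gcd claim. To get a homogeneous $f\in R$ with $J=fR$ you would lift a homogeneous generator of the codimension-one part of $I\otimes_K\kappa$; but a homogeneous lift to $R$ is not known to lie in $J$, and a lift inside $J$ is not known to be homogeneous, unless one already knows $J$ is homogeneous — which is what you are trying to prove. You also need the unproved compatibility that $J\otimes_K\kappa$ is the codimension-one part of $I\otimes_K\kappa$, and you invoke local freeness of $R/I$ (admissibility) for this first half of the lemma, where the statement makes no such hypothesis; note too that under the nonpositive gradings the graded pieces of $R$ are not finitely generated over $K$, so ``Nakayama in each graded degree'' needs care. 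In short, the first three conclusions must be established over $R$ itself (as the paper does), not assembled from the fibres; as written, the homogeneity statement — the heart of the lemma — is unproven.
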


\begin{remark}
  \label{r:gcd}
  The empty intersection of ideals equals $R$ by convention, and $N
  \geq 2$, so $J \neq 0$.  If $K$ is a unique factorization domain,
  then $J$ is simply generated by a greatest common divisor of any
  generating set for $I$.  This follows from observation that in a
  unique factorization domain a primary ideal whose radical has
  codimension one is principal.
\end{remark}

\begin{proof}[Proof of Lemma~\ref{l:alg}]
  We first show that $J$ is homogeneous.  Since we may assume that
  $\deg \colon \NN^N \to A$ is surjective, the structure theorem for
  finitely generated abelian groups implies that $A \cong \ZZ^r \oplus
  \ZZ/m_1 \ZZ \oplus \dotsb \oplus \ZZ/m_s \ZZ$.  It suffices to show
  that $J$ is homogeneous with respect to each summand of $A$.  The
  codimension-one primary components of $I$ are homogeneous with
  respect a torsion-free grading by \cite{Bourbaki}*{IV \S3.3
    Proposition 5}, so $J$ is also homogeneous with respect to a
  torsion-free grading.  The case $A = \ZZ/m\ZZ$ remains.  Consider an
  integral extension $\kk'$ of the field $\kk$ containing an
  $m^{\text{th}}$ root of unity $\omega$, and set $R' := \kk'
  \otimes_\kk R$.  Let $I' := \kk' \otimes_\kk I$ and let $J'$ be the
  codimension-one equidimensional component of $I'$.  From the
  intrinsic descriptions $J = \{ f \in R : \codim (I : f) \geq 2 \}$
  and $J' = \{f \in R' : \codim (I' : f) \geq 2 \}$, we see that $J =
  R \cap J'$.  Thus, it is enough to show that $J'$ is homogeneous
  with respect to a $(\ZZ/m\ZZ)$-grading.

  To accomplish this, fix generators for $J'$.  For a generator $f \in
  R'$, we write $f = \sum_{a \in A} f_a$ where each $f_a$ is
  homogeneous of degree $a \in A$.  We may assume that the generating
  set for $J'$ has been chosen so that $f_a$ does not lie in $J'$ if
  $f \neq f_a$ and $f_a \neq 0$.  Consider the automorphism $\phi
  \colon R' \to R'$ defined by $\phi(x_i) = \omega^{\deg(x_i)} x_i$
  for $1 \leq i \leq N$.  Since $\phi$ permutes the set of
  codimension-one primary components of $I'$, we have $\phi(J') = J'$.
  If $f_a \neq 0$, then $\omega^a f - \phi(f) = \sum_{a' \in A}
  (\omega^a - \omega^{a'}) f_{a'} \in J'$ has fewer homogeneous parts.
  Iterating this procedure, it follows that one of the nonzero $f_a$
  lies in $J'$ which means that $f$ is itself homogeneous.  Therefore,
  $J'$ has a homogeneous set of generators and $J$ is homogeneous.

  Next, consider $\mathfrak{p} \in \Spec(K)$ and let $k(\mathfrak{p})
  := K_{\mathfrak{p}}/\mathfrak{p} K_{\mathfrak{p}}$ be the residue
  field at $\mathfrak{p}$.  It follows from Remark~\ref{r:gcd} that $J
  \otimes_K k(\mathfrak{p})$ is generated by the greatest common
  divisor of a generating set for $I \otimes_K k(\mathfrak{p})$.
  Since the ideal $\mathfrak{p} R_{\mathfrak{p}}$ lies in the Jacobson
  radical of $R_{\mathfrak{p}} := R \otimes_K K_{\mathfrak{p}}$,
  Nakayama's Lemma implies that $J_{\mathfrak{p}} := J \otimes_K
  K_{\mathfrak{p}}$ is generated by a single element $f$, so the ideal
  $J$ is a locally principal $K$-module and $f \not\in \mathfrak{p}
  R_{\mathfrak{p}}$.

  To complete the first part, we examine $Q := (I:J)$.  Since $I$ and
  $J$ are homogeneous, the ideal $Q$ is as well.  To see that $I = J
  Q$, it suffices to regard these ideals as $K$-modules and work
  locally.  Suppose that $\mathfrak{p} \in \Spec(K)$ and
  $I_{\mathfrak{p}} := I \otimes_K K_{\mathfrak{p}} = \langle f_1,
  \dotsc, f_\ell \rangle$.  Since $I_{\mathfrak{p}} \subseteq
  J_{\mathfrak{p}} = \langle f \rangle$, we must have $f_i = f f'_i$
  for some $f'_i \in R_{\mathfrak{p}}$.  If $g \in Q_{\mathfrak{p}} =
  (I_{\mathfrak{p}}:J_{\mathfrak{p}})$ then $fg = \sum g_i f_i$ for
  some $g_i \in R_{\mathfrak{p}}$, so $f(g-\sum g_i f'_i) = 0$.
  Because $f$ either generates a codimension-one ideal or is a unit,
  it is not a zerodivisor, so $h \in \langle f'_1, \dots, f'_\ell
  \rangle$.  We conclude that $Q_{\mathfrak{p}}= \langle
  f'_1,\dots,f'_\ell \rangle$ and $I_{\mathfrak{p}} = J_{\mathfrak{p}}
  Q_{\mathfrak{p}}$.
 
  It remains to show that $J$ and $Q$ are admissible ideals.  Let $d
  := \deg(f) \in A$. Since the homogeneous generator $f$ of
  $J_{\mathfrak{p}}$ is not zerodivisor, there is a short exact
  sequence of $K_{\mathfrak{p}}$-modules $0 \to
  (R_{\mathfrak{p}})_{a-d} \to (R_{\mathfrak{p}})_a \to
  (R_{\mathfrak{p}}/ J_{\mathfrak{p}})_a \to 0$ for each $a \in A$.
  Since $f \not \in \mathfrak{p}R_{\mathfrak{p}}$, this sequence shows
  that $\operatorname{Tor}_{R_{\mathfrak{p}}}^1\bigl( k(\mathfrak{p})
  , (R_{\mathfrak{p}}/J_{\mathfrak{p}})_a \bigr) = 0$.  The surjection
  $(R_{\mathfrak{p}}/I_{\mathfrak{p}})_a \rightarrow
  (R_{\mathfrak{p}}/J_{\mathfrak{p}})_a$ of $K_{\mathfrak{p}}$-modules
  establishes that $(R_{\mathfrak{p}}/J_{\mathfrak{p}})_a$ is finitely
  presented.  Hence, Corollary~2 to \cite{Bourbaki}*{II \S 3.2
    Proposition 5} implies that
  $(R_{\mathfrak{p}}/J_{\mathfrak{p}})_a$ is free as a
  $K_{\mathfrak{p}}$-module for all $a \in A$.  Multiplication by $f$
  also produces the short exact sequence
  \begin{equation}
    \label{eq:add}
    0 \to (R_{\mathfrak{p}}/Q_{\mathfrak{p}})_{a-d}
    \to  (R_{\mathfrak{p}}/I_{\mathfrak{p}})_a \to
    (R_{\mathfrak{p}}/J_{\mathfrak{p}})_a \to 0 \,.
  \end{equation}  
  The admissibility of $I$ guarantees that
  $(R_{\mathfrak{p}}/I_{\mathfrak{p}})_a$ is a finite rank free
  $K_{\mathfrak{p}}$-module for all $a \in A$.  The sequence
  \eqref{eq:add} splits, so $(R_{\mathfrak{p}}/Q_{\mathfrak{p}})_{a}$
  is free $K_{\mathfrak{p}}$-module of finite rank and
  $(R_{\mathfrak{p}}/J_{\mathfrak{p}})_{a}$ has finite rank.  Since
  rank is upper semicontinuous, $(R/I)_a$ has constant rank on
  $\Spec(K)$, and $\Spec(K)$ is connected, we conclude that $(R/Q)_a$
  and $(R/J)_a$ have constant rank on $\Spec(K)$ for all $a \in A$.
\end{proof}

Before factoring multigraded Hilbert schemes, we record a geometric
observation.

\begin{lemma} 
  \label{l:codim}
  Given a function $h \colon A \to \NN$, there is a constant $c =
  c(h)$ such that, for each Noetherian $\kk$-algebra $K$, every
  admissible ideal $I \subseteq S \otimes_{\kk} K$ with Hilbert
  function $h$ has codimension $c$.
\end{lemma}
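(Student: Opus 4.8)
The plan is to reduce the statement to the case where $K$ is a field, and then to a combinatorial fact about monomial ideals. Granting the field case---say with constant $c = c(h)$---I would argue as follows for a general Noetherian $\kk$-algebra $K$. Admissibility makes the $A$-graded ring $(S \otimes_\kk K)/I$ a \emph{flat} $K$-module, since it is the direct sum of the locally free $K$-modules $\bigl((S \otimes_\kk K)/I\bigr)_a$; being also a finitely generated $K$-algebra, it yields a flat finite-type morphism $\Spec\bigl((S \otimes_\kk K)/I\bigr) \to \Spec(K)$. Its fibre over $\mathfrak{p} \in \Spec(K)$ is cut out by $I \otimes_K k(\mathfrak{p})$, and because passing to $k(\mathfrak{p})$-points preserves the ranks of the locally free modules $\bigl((S \otimes_\kk K)/I\bigr)_a$, this is an admissible ideal of $S \otimes_\kk k(\mathfrak{p})$ with the same Hilbert function $h$, hence of codimension $c$. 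The dimension formula for flat morphisms then expresses $\codim(I)$ through the codimensions of these fibres: every minimal prime of $I$ lies over a minimal prime $\mathfrak{p}$ of $K$ and induces a minimal prime of $I \otimes_K k(\mathfrak{p})$, so $\codim(I) = \min_{\mathfrak{p}} \codim\bigl(I \otimes_K k(\mathfrak{p})\bigr) = c$. It therefore suffices to treat fields.

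So fix a field $K$ containing $\kk$ and an admissible ideal $I \subseteq T := S \otimes_\kk K$ with Hilbert function $h$, and choose a term order on the monomials of $T$. Passing to $\initial(I)$ changes neither the Hilbert function---the standard monomials of $I$ and $\initial(I)$ coincide and are homogeneous---nor the Krull dimension of $T/I$, by the usual Gr\"obner degeneration \cite{Eisenbud}. So we may assume $I = M$ is a monomial ideal. Writing $\operatorname{Std}(M) \subseteq \NN^N$ for the exponent vectors of the standard monomials and $\NN^\sigma := \{u \in \NN^N : \operatorname{supp}(u) \subseteq \sigma\}$ for $\sigma \subseteq \{1, \dots, N\}$, one has $\dim(T/M) = \max\{|\sigma| : \NN^\sigma \subseteq \operatorname{Std}(M)\}$, since the minimal primes of a monomial ideal are among the coordinate primes containing it. As $\operatorname{Std}(M)$, the degree map $\deg \colon \NN^N \to A$, and the identity $h(a) = \#\bigl(\operatorname{Std}(M) \cap \deg^{-1}(a)\bigr)$ do not involve $K$, the lemma reduces to the assertion that \emph{$\dim(T/M)$ is determined by the function $h$.}

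To prove this I would fix a Stanley decomposition $\operatorname{Std}(M) = \bigsqcup_j \bigl(w_j + \NN^{\tau_j}\bigr)$, a finite disjoint union of translated coordinate submonoids, with $d := \dim(T/M) = \max_j |\tau_j|$. Because the union is disjoint, the ensuing identity of formal sums $\sum_a h(a)\,[a] = \sum_j [\deg w_j]\cdot\sum_{u \in \NN^{\tau_j}} [\deg u]$ admits no cancellation, so each inner sum must already have finite coefficients; equivalently, $\deg$ restricts to a positive grading on each $\NN^{\tau_j}$. When $\dim_\kk S_0 < \infty$ this positivity is automatic, and after coarsening $A$ along a group homomorphism $A \to \ZZ$ that is positive on the degrees of the variables, the displayed identity turns into the Hilbert series of a finitely generated graded module over a polynomial ring---a rational function in one variable whose order of pole at $1$ is $d$---so $d$, and hence $c(h) := N - d$, is recovered from $h$. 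The residual case $\dim_\kk S_0 = \infty$ is the delicate one: no positive coarsening of $A$ exists, so one must work directly with the Stanley decomposition, exploiting that the top pieces $\NN^{\tau_j}$ with $|\tau_j| = d$ embed via $\deg$ into positive submonoids of $A$ and that the degree-zero subring $S_0$---a normal affine semigroup ring, as in the proof of Lemma~\ref{l:principal}---constrains how such pieces can interlock. Showing that $d$ is the same for every monomial ideal with Hilbert function $h$, above all in this nonpositive case, is the main obstacle; granted it, the first paragraph yields the constant $c(h)$ over every Noetherian $\kk$-algebra $K$.
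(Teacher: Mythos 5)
Your two reductions are fine and agree in substance with the paper's. The passage to a field is the same content packaged differently: where you invoke flatness of $(S\otimes_\kk K)/I$ over $K$ (each graded piece being locally free of finite rank), going-down, and the fibre-dimension formula for the flat ring $S\otimes_\kk K$, the paper proves the same two facts by hand, showing directly from admissibility that $P\cap K$ is a minimal prime of $K$ for every associated prime $P$ of $I$ and then computing $\codim(I)$ after localizing at such a $\mathfrak{p}$; the fibre ideal is admissible with the same Hilbert function in either formulation. The further reduction to a monomial ideal via a Gr\"obner degeneration is also exactly the paper's step.

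The genuine gap is the step you yourself call ``the main obstacle'': proving that the dimension of a monomial ideal $M$ is determined by its finite-valued $A$-graded Hilbert function when the grading is nonpositive, i.e.\ when $\dim_\kk S_0=\infty$ and no coarsening $A\to\ZZ$ positive on the degrees of the variables exists. Your Hilbert-series argument settles only the positive case, and the observation that $\deg$ has finite fibres on each Stanley piece does not by itself rule out two monomial ideals with the same $h$ having different dimensions; yet the nonpositive case is precisely the hard content of the lemma (the introduction singles out nonpositive gradings as the source of the technical difficulty throughout the paper). The paper closes this as follows: it first passes to the torsion-free quotient $\ZZ^r$ of $A$ together with the induced Hilbert function (still finite-valued because the torsion subgroup is finite) --- a reduction your write-up also omits --- and then, for each $a\in\ZZ^r$, studies the ray function $\overline{h}_a(n):=h(na)$; combining Theorem~1 of \cite{Sturmfels} on vector partition functions with a Stanley decomposition of $M$, it shows $\overline{h}_a$ agrees with a quasipolynomial of some degree $d_a$ for $n\gg 0$ and that these degrees recover $\dim(S/M)$ from $h$ alone, for an arbitrary (in particular nonpositive) grading. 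Some argument of this kind is what your proposal is missing; without it the constant $c(h)$ is not established in exactly the cases the lemma is needed for.
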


\begin{proof}
  Let $K$ be a Noetherian $\kk$-algebra and let $I \subseteq S
  \otimes_\kk K$ be an admissible ideal with Hilbert function $h
  \colon A \to \NN$.  By restricting to the torsion-free component of
  $A$ and the induced Hilbert function, it is enough to prove the
  result when $A = \ZZ^r$.  Suppose that $P \in
  \operatorname{Ass}(I)$.  We first claim that $ \mathfrak{p} := P
  \cap K$ is a minimal prime ideal in $K$.  Since $P$ is an associated
  prime of $I$, there exists $f \in R := S \otimes_\kk K$ such that $P
  = (I : f)$, so $lf \in I$ for all $l \in \mathfrak{p}$.  Since
  $(R_{\mathfrak{p}} / I_{\mathfrak{p}})_a$ is a free
  $K_{\mathfrak{p}}$-module for all $a \in \ZZ^r$, we have either $f/1
  = 0$ or $l/1 = 0$ in $R_{\mathfrak{p}} / I_{\mathfrak{p}}$.  The
  first possibility would contradict $\mathfrak{p} = (I:f) \cap K$, so
  there is $l' \in K \setminus \mathfrak{p}$ with $l'l = 0 \in K$.
  Hence all primes in $\Spec(K)$ contained in $\mathfrak{p}$ must
  contain $l$.  Because $l$ was an arbitrary element of
  $\mathfrak{p}$, we deduce that $\mathfrak{p}$ is a minimal prime.

  The codimension of $I$ in $R$ is the minimum of the codimensions of
  prime ideals in $R$ containing $I$.  If $P$ is a minimal prime ideal
  containing $I$, then $P \in \operatorname{Ass}(I)$.  Since
  $\mathfrak{p} = P \cap K$ is minimal in $\Spec(K)$, all prime ideals
  in $R$ contained in $P$ also intersect $K$ in $\mathfrak{p}$, so
  $\codim(P,R) = \codim( P_{\mathfrak{p}}, R_{\mathfrak{p}}) = \codim(
  P_{\mathfrak{p}}/\mathfrak{p} R_{\mathfrak{p}},
  k(\mathfrak{p})[\bm{x}])$ where $k(\mathfrak{p}) :=
  K_{\mathfrak{p}}/\mathfrak{p}K_{\mathfrak{p}}$ is the residue field
  at $\mathfrak{p}$.  Applying this to a prime ideal $P$ satisfying
  $\codim(I,R) = \codim(P,R)$, we see that $\codim(I,R) =
  \codim(I_{\mathfrak{p}}, R_{\mathfrak{p}}) = \codim(
  I_{\mathfrak{p}}/\mathfrak{p} R_{\mathfrak{p}},
  k(\mathfrak{p})[\mathbf{x}])$.  Since $I_{\mathfrak{p}}/\mathfrak{p}
  R_{\mathfrak{p}}$ is an admissible ideal in
  $k(\mathfrak{p})[\bm{x}]$ with Hilbert function $h \colon A \to
  \NN$, the proof reduces to the case in which $K$ is a field.

  In this case, we have $\codim(I,R) = \dim R - \dim I = N - \dim
  \initial(I)$ for any monomial initial ideal $\initial(I)$ of $I$.
  Therefore, it suffices to observe that the dimension of a monomial
  ideal $M$ is determined by its Hilbert function with respect to a
  $\ZZ^r$-grading.  For any $a \in \ZZ^r$, consider the function
  $\overline{h}_a: \NN \to \NN$ defined by $\overline{h}_a(n) :=
  h(na)$.  By combining Theorem~1 in \cite{Sturmfels} with an
  appropriate Stanley decomposition of $M$ (cf. \cite{MS}*{\S3}), we
  see that the function $\overline{h}_a$ agrees with a quasipolynomial
  of degree $d_a$ for $n \gg 0$ and the dimension of $M$ is $r +
  \max\{d_a : a \in \ZZ^r \}$.
\end{proof}

The following theorem is the key result in this section.

\begin{theorem}
  \label{t:factoring}
  Let $H$ be a connected component of $\Hilb_S^h$.  There exists a
  Hilbert function $h' \colon A \to \NN$ such that $H$ is isomorphic
  to $X \times H'$, where $X$ is either $\PP^m$ or $\AA^m$ for some $m
  \in \NN$, $H'$ is a connected component of $\Hilb_S^{h'}$, and $H'$
  parametrizes admissible ideals with codimension greater than one.
\end{theorem}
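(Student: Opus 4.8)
The plan is to promote the pointwise algebraic factorization of Lemma~\ref{l:alg} to a factorization of schemes, working with the universal family over $H$ and the functorial description of multigraded Hilbert schemes. Let $\mathcal{I}$ be the restriction to $H$ of the universal admissible ideal on $\Hilb_S^h$, and set $S_H := S \otimes_\kk \mathcal{O}_H$. Cover $H$ by connected affine open subschemes $\Spec K_\alpha$ with each $K_\alpha$ a Noetherian $\kk$-algebra, and apply Lemma~\ref{l:alg} to $\mathcal{I}|_{\Spec K_\alpha} \subseteq S \otimes_\kk K_\alpha$: this produces homogeneous ideals $\mathcal{J}_\alpha$ (locally principal over $K_\alpha$, locally generated by a nonzerodivisor of some degree $d_\alpha \in A$) and $\mathcal{Q}_\alpha := (\mathcal{I}_\alpha : \mathcal{J}_\alpha)$, both admissible because $\Spec K_\alpha$ is connected, with $\mathcal{I}_\alpha = \mathcal{J}_\alpha \mathcal{Q}_\alpha$. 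Since $\mathcal{J}_\alpha = \{f : \codim(\mathcal{I}_\alpha : f) \geq 2\}$ and $\mathcal{Q}_\alpha$ are intrinsic, they agree on overlaps and glue to homogeneous ideals $\mathcal{J}, \mathcal{Q} \subseteq S_H$ with $\mathcal{I} = \mathcal{J}\mathcal{Q}$. By Lemma~\ref{l:alg} the graded pieces of $S_H/\mathcal{J}$ and $S_H/\mathcal{Q}$ are locally free of locally constant rank, and $d_\alpha$ is locally constant; as $H$ is connected these are globally constant, so there are well-defined Hilbert functions $h''$ of $\mathcal{J}$ and $h'$ of $\mathcal{Q}$ and a single degree $d \in A$, and the multiplication-by-a-local-generator sequence $0 \to (S_H/\mathcal{Q})(-d) \to S_H/\mathcal{I} \to S_H/\mathcal{J} \to 0$ gives $h(a) = h''(a) + h'(a-d)$ for all $a \in A$.

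Next I identify $X$ and build the two morphisms. Since $H$ is nonempty and of finite type over $\kk$, pick a geometric point; there $\mathcal{J}$ specializes to a genuine principal ideal $\langle f\rangle$ of degree $d$ (Remark~\ref{r:gcd}), which is admissible with Hilbert function $h''$, so Lemma~\ref{l:principal} yields $X := \Hilb_S^{h''} \cong \PP^m$ or $\AA^m$ with $m = h''(d)$. The family $(\mathcal{J}, \mathcal{Q})$ over $H$ is admissible with Hilbert functions $h''$ and $h'$, so by the functorial description of Hilbert schemes it defines a morphism $\Phi \colon H \to X \times \Hilb_S^{h'}$; as $X$ is connected, $\Phi(H) \subseteq X \times H'$ for a single connected component $H'$ of $\Hilb_S^{h'}$. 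Conversely, multiplying the universal locally principal ideal on $X$ (locally generated by a nonzerodivisor of degree $d$) by the universal ideal on $H'$ and using the same exact sequence shows the product is admissible with Hilbert function $h$; this defines $\Psi \colon X \times H' \to \Hilb_S^h$, whose image lies in $H$ because $X \times H'$ is connected and meets $H$.

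Finally I check $\Phi$ and $\Psi$ are mutually inverse and that $H'$ parametrizes ideals of codimension $>1$. The identity $\Psi \circ \Phi = \mathrm{id}_H$ is just $\mathcal{I} = \mathcal{J}\mathcal{Q}$ from Lemma~\ref{l:alg}. For $\Phi \circ \Psi = \mathrm{id}_{X \times H'}$ one must show that refactoring a product $\langle f\rangle\mathcal{Q}$, with $f$ a nonzerodivisor and $\codim \mathcal{Q} \geq 2$, recovers $\langle f\rangle$ and $\mathcal{Q}$: writing the codimension-one part as a locally principal $\langle g\rangle$ with $f = gf'$, the residual $(\langle f\rangle\mathcal{Q} : \langle g\rangle) = f'\mathcal{Q}$ has codimension $\geq 2$, which forces $f'$ to be a unit (otherwise $\langle f'\rangle$ contributes a codimension-one minimal prime to $f'\mathcal{Q}$), so $\langle g\rangle = \langle f\rangle$ and the residual is $(\langle f\rangle\mathcal{Q} : \langle f\rangle) = \mathcal{Q}$. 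Thus $H \cong X \times H'$. That $H'$ parametrizes admissible ideals of codimension $>1$ follows from Lemma~\ref{l:codim}: the codimension is the constant $c(h')$, and at a geometric point of $H'$ the corresponding ideal is a residual $\mathcal{Q}$, which has only associated primes of codimension $\geq 2$, so $c(h') \geq 2$. The main obstacles are the bookkeeping in the first step — extracting a single Hilbert function $h'$ and degree $d$ over the whole component forces one to combine the constant-rank statement of Lemma~\ref{l:alg} on connected charts with the connectedness of $H$, and under nonpositive gradings even "the degree of a local generator" is delicate, exactly as in the proof of Lemma~\ref{l:alg} — together with verifying $\Phi \circ \Psi = \mathrm{id}$ over an arbitrary, possibly non-reduced, Noetherian base rather than merely at geometric points, where the observation that a codimension-$\geq 2$ ideal cannot absorb a codimension-one factor is essential.
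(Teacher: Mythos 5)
Your proposal is correct and follows essentially the same route as the paper: factor the universal family over $H$ via Lemma~\ref{l:alg}, use connectedness of $H$ and Lemma~\ref{l:principal} to identify $X = \Hilb_S^{h''}$, define mutually inverse maps $\Phi$ (pulling back $\mathcal{J},\mathcal{Q}$) and $\Psi$ (multiplication), and invoke Lemma~\ref{l:codim} for the codimension statement. The only differences are presentational -- you glue over connected affine charts while the paper phrases everything via functors of points, and your unit argument for $\Phi\circ\Psi=\mathrm{id}$ is a mild rewording of the paper's ``nonunit common divisor'' contradiction.
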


To establish this decomposition, we use the associated functors of
points; see \cite{EisenbudHarris}*{\S VI}.  Let $\mathbf{h}_Z$ be the
functor of points determined by a scheme $Z$.  For a $\kk$-algebra
$K$, we have $\mathbf{h}_Z(K) := \Hom(\Spec(K), Z)$.  From this point
of view, a morphism of schemes $Z \to Z'$ is equivalent to a natural
transformation $\mathbf{h}_Z \to \mathbf{h}_{Z'}$ of functors.  Since
the schemes in Theorem~\ref{t:factoring} are all locally Noetherian
over $\kk$, we may assume that their associated functors of points map
from the category of Noetherian $\kk$-algebras to the category of
sets.

By definition \cite{HaimanSturmfels}*{\S1}, the scheme $\Hilb_S^h$
represents the Hilbert functor $\bHilb_S^h$.  Recall that a
homogeneous ideal $I$ in $K \otimes_\kk S$ is admissible if, for all
$a \in A$, the $K$-module $(K \otimes_\kk S_a)/I_a$ is a locally free
of constant rank on $\Spec(K)$.  For a $\kk$\nobreakdash-algebra $K$,
$\bHilb_S^h(K)$ is the set of all admissible ideals $I$ in $K
\otimes_\kk S$ with Hilbert function $h \colon A \to \NN$.

\begin{proof}[Proof of Theorem~\ref{t:factoring}]
  Consider the ideal sheaf $\mathscr{I}$ on $H \times \AA^N$ which
  defines the universal admissible family over $H$ with Hilbert
  function $h \colon A \to \NN$.  If $\mathscr{I}$ is zero, then the
  theorem is trivially true, so we may assume that $\mathscr{I} \neq
  0$.  Let $\mathscr{J}$ be the intersection of the codimension-one
  primary components of $\mathscr{I}$.  Since $H$ is connected,
  Lemma~\ref{l:alg} shows that $\mathscr{J}$ and $\mathscr{Q} :=
  (\mathscr{I} : \mathscr{J})$ are admissible.  Let $h' \colon A \to
  \NN$ and $h'' \colon A \to \NN$ be the Hilbert functions associated
  to $\mathscr{Q}$ and $\mathscr{J}$ respectively.  Lemma~\ref{l:alg}
  also shows that $\mathscr{J}$ is locally principal over $H$, so
  $h''$ is the Hilbert function of some principal $S$-ideal.  The
  degree of the local generator for $\mathscr{J}$ is constant, because
  $H$ is connected.  By combining these observations with
  Lemma~\ref{l:principal}, we see that $X := \Hilb_S^{h''}$ is
  isomorphic to either $\PP^m$ or $\AA^m$ for an appropriate $m \in
  \NN$.

  We next define a natural transformation $\Phi \colon \mathbf{h}_H
  \to \mathbf{h}_X \times \bHilb_S^{h'}$.  Let $K$ be a Noetherian
  $\kk$-algebra and set $R := K \otimes_\kk S$.  Given an $R$-ideal
  $I$ corresponding to a $K$-valued point of $H$, there is a map
  $\Spec(K) \to H$ such that $I$ is the pull-back of $\mathscr{I}$.
  Using this map to pullback $\mathscr{J}$ and $\mathscr{Q}$, we
  obtain ideals $J \in \mathbf{h}_X(K)$ and $Q \in \bHilb_S^{h'}(K)$.
  Set $\Phi(I) := (J,Q)$.  Let $H'$ be the connected component of
  $\Hilb_S^{h'}$ containing the image of $\Phi$, so $\Phi \colon
  \mathbf{h}_H \to \mathbf{h}_X \times \mathbf{h}_{H'}$.

  To construct the inverse of $\Phi$, consider $K$-valued points of
  $X$ and $H'$ corresponding to $R$\nobreakdash-ideals $J'$ and $Q'$
  respectively.  Our choice of Hilbert functions $h', h'' : A \to \NN$
  together with Lemma~\ref{l:codim} show that $Q'$ has codimension
  greater than one and $J'$ has codimension at most one.  The proof of
  Lemma~\ref{l:principal} establishes that $J'$ is a locally principal
  $K$-module, so $J'_{\mathfrak{p}} = \langle f' \rangle$ where $f'$
  is a homogeneous nonzerodivisor of degree $d \in A$.  Set $I' := J'
  Q'$.  We claim that $(I':J') = Q'$.  It suffices to regard these
  ideals as $K$-modules and work locally.  Suppose that
  $Q'_{\mathfrak{p}} = \langle f_1, \dotsc, f_\ell \rangle$, so that
  $I'_{\mathfrak{p}} = \langle f' f_1, \dotsc, f' f_\ell \rangle$.  If
  $g \in (I'_{\mathfrak{p}} : f' )$ then $g f' = \sum g_i f' f_i$ for
  some $g_i \in R_{\mathfrak{p}}$, so $f' ( g - \sum g_i f_i) = 0$ and
  thus $g \in Q'_{\mathfrak{p}}$.  The other inclusion is immediate,
  so we have $(I' : J') = Q'$.  Thus multiplication by $f'$ gives the
  short exact sequence $0 \to \left(
    R_{\mathfrak{p}}/Q'_{\mathfrak{p}} \right)_{a-d} \to \left(
    R_{\mathfrak{p}}/I'_{\mathfrak{p}} \right)_a \to \left(
    R_{\mathfrak{p}}/J'_{\mathfrak{p}} \right)_a \to 0$.  It follows
  that $I'$ is admissible with Hilbert function $h \colon A \to \NN$.
  The map $(J',Q') \mapsto J'Q'$ then defines a natural transformation
  $\Psi \colon \mathbf{h}_X \times \mathbf{h}_{H'} \rightarrow
  \bHilb_S^h$.  If $I$ is a $K$-valued point of $H$, then
  Lemma~\ref{l:alg} implies that $I = JQ$, where $\Phi(I) = (J,Q)$, so
  $I$ lies in the image of $\Psi$.  Therefore, the unique connected
  component of $\Hilb^h_S$ containing the image of $\Psi$ is $H$, and
  $\Psi \colon \mathbf{h}_X \times \mathbf{h}_{H'} \rightarrow
  \mathbf{h}_H$.

  To finish the proof, we observe that $\Phi$ and $\Psi$ are mutually
  inverse.  In the last paragraph we showed that $\Psi \circ \Phi$ is
  the identity on $H$, so it suffices to check that if $J'$ and $Q'$
  correspond to $K$-valued points of $X$ and $H'$, then $J'$ is the
  codimension-one equidimensional part of $J' Q'$.  The fact that $Q'
  = (J' Q' : J')$ then follows as above.  Again it suffices to work
  locally on $\Spec(K)$.  Since $(I_{\mathfrak{p}}':f') =
  Q'_{\mathfrak{p}}$ has codimension greater than one by
  Lemma~\ref{l:codim}, $f'$ lies in the codimension-one part of
  $I'_{\mathfrak{p}}$.  If $f'$ did not generate $I'_{\mathfrak{p}}$
  there would be a nonunit common divisor of every generator of
  $Q'_{\mathfrak{p}}$, which would contradict
  $\codim(Q'_{\mathfrak{p}}, R_{\mathfrak{p}}) > 1$.  Hence, $\Phi
  \circ \Psi$ is the identity on $\mathbf{h}_X \times
  \mathbf{h}_{H'}$.
\end{proof}

\begin{example}
  Suppose that $A = \ZZ$, $S = \kk[x,y]$, $\deg(x) = 1$ and $\deg(y) =
  -1$.  Let $h \colon A \to \NN$ be the Hilbert function of the ideal
  $I = \langle x^4y^3, x^3y^4, x^2y^5 \rangle = \langle x^2y^3 \rangle
  \cdot \langle x^2, xy, y^2 \rangle$.  Since Theorem~\ref{t:main}
  establishes that $\Hilb_S^h$ is irreducible, it follows from
  Theorem~\ref{t:factoring} that $\Hilb_S^h \cong \Hilb_S^{h''} \times
  \Hilb_S^{h'} \cong \AA^2 \times \Hilb_S^{h'}$ where $h'' \colon A
  \to \NN$ is the Hilbert function of the ideal $J = \langle x^2y^3
  \rangle$ and $h' \colon A \to \NN$ is the Hilbert function of the
  ideal $Q = \langle x^2, xy, y^2 \rangle$.  Since $S_{-1}$ has
  $\kk$-basis $\{ y, xy^2, x^2y^3, \dotsc \}$, $\Hilb_S^{h''}$
  parametrizes all ideals $\langle x^2y^3 + c_1 xy^2 +c_2y \rangle$
  with $c_1,c_2 \in \kk$.
\end{example}

\section{Rationally Chain Connected}
\label{s:connected}

In this section, we prove that $\Hilb_S^h$ is rationally chain
connected when $\kk$ is a field, $S = \kk[x,y]$, and $|h| := \sum_{a
  \in A} h(a) < \infty$.  Indeed, we show that there exists a
distinguished monomial ideal in $S$, called the lex-most ideal, and a
finite chain of irreducible rational curves on $\Hilb_S^h$ connecting
any point to the point corresponding to this lex-most ideal.  The key to
exhibiting these curves is a combinatorial model for the tangent space
to $\Hilb_S^h$ at a point corresponding to a monomial ideal.

Consider a monomial ideal $M$ in $S$ with Hilbert function $h \colon A
\to \NN$ and let the monomials $x^{p_0}y^{q_0}$, $x^{p_1}y^{q_1}$,
$\dotsc$, $x^{p_n}y^{q_n}$ be the minimal generators of $M$ where $p_0
> \dotsb > p_n \geq 0$ and $0 \leq q_0 < \dotsb < q_n$.  The ideal $M$
has finite colength if and only if $p_n = 0 = q_0$.  An \define{arrow}
associated to $M$ is a triple $(i,u,v) \in \NN^3$ where $0 \leq i \leq
n$, the monomial $x^{p_i}y^{q_i}$ is a minimal generator of $M$, and
$x^uy^v$ is a standard monomial for $M$ with the same degree as
$x^{p_i}y^{q_i}$.  Because $x^uy^v \not\in M$, we must have either $u
< p_i$ or $v < q_i$.  We visualize an arrow $(i,u,v)$ as the vector
$\left[
  \begin{smallmatrix} 
    u - p_i \\
    v - q_i 
  \end{smallmatrix} \right]$ originating at the $(p_i,q_i)$-cell and
terminating at the $(u,v)$-cell; see Figure~\ref{f:arrows}.

\begin{remark}
  Despite similar nomenclature, our definition of an arrow is
  different from \cite{HaimanCatalan}*{Proposition~2.4},
  \cite{Huibregtse}*{\S2} and \cite{MillerSturmfels}*{\S18.2}.  In
  these sources, an `arrow' refers to an equivalence class of vectors;
  the equivalence relation arises from certain horizontal and vertical
  translations.  By fixing the tails of our arrows at minimal
  generators of $M$, we are choosing elements in each equivalence
  class.  The `significant arrows' defined below are in bijection with
  the nonzero equivalence classes.  This strategy follows
  \cite{Evain}*{\S2}.
\end{remark}

Arrows are classified by their direction and position of their head
relative to $M$.  To indicate the direction, we say that an arrow
$(i,u,v)$ is \define{positive} if $u > p_i$, \define{nonnegative} if
$u \geq p_i$, \define{nonpositive} if $v \geq q_i$, or \define{utterly
insignificant} if both $u < p_i$ and $v < q_i$.  The second aspect of
our classification is determined by the monomial $x^uy^v$ which we
regard as the head of the arrow $(i,u,v)$.  A nonnegative arrow
$(i,u,v)$ is \define{significant} if $i > 0$ and $x^{u+p_{i-1}-p_i}y^v
\in M$.  We denote by $T_{\geq 0}(M)$ the set of all nonnegative
significant arrows of $M$.  The subset of $T_{\geq 0}(M)$ consisting
of all positive significant arrows plays a central role and is denoted
by $T_+(M)$.  Similarly, we call a nonpositive arrow
\define{significant} if $i < n$ and $x^uy^{v-q_i+q_{i+1}} \in M$, and
denote by $T_{\leq 0}(M)$ the set of all nonpositive significant
arrows of $M$.  An arrow is simply \define{significant} if it belongs
to $T(M) := T_{\geq 0}(M) \cup T_{\leq 0}(M)$, and
\define{insignificant} otherwise.  As the notation suggests, the
significant arrows index a basis for the tangent space to $\Hilb_S^h$
at the point corresponding to $M$; see \S\ref{s:tangents}.

\begin{remark}
  \label{r:nonpositiveGrading}
  By definition, every utterly insignificant arrow is insignificant.
  If $(i,u,v)$ is an utterly insignificant arrow, then we have
  $\deg(x^{p_i-u}y^{q_i-v}) = 0 \in A$, so $\dim_\kk S_0 = \infty$.
  If $(i,u,v)$ is an arrow with either $u = p_i$ or $v = q_i$, then
  either $v < q_i$ and $\deg(y^{q_i-v}) = 0 \in A$ or $u < p_i$ and
  $\deg(x^{p_i-u}) = 0 \in A$.  In either case one variable has
  torsion degree, which also implies that $\dim_\kk S_0 = \infty$.
\end{remark}

Next, we associate an irreducible rational curve on $\Hilb_S^h$ to each
positive significant arrow $\alpha := (k, \ell+p_k, m+q_k) \in
T_{+}(M)$.  To describe this curve, we define the
$\alpha$-\define{edge ideal} to be
\begin{equation}
  \label{eq:edge}
  I_{\alpha}(t) := \langle x^{p_i}y^{q_i} : 0 \leq i < k \rangle +
  \langle x^{p_i}y^{q_i} - t x^{\ell+p_i}y^{m+q_i} : k \leq i \leq n
  \rangle
\end{equation}
where $t \in \kk$.  By construction, the $S$-ideal $I_{\alpha}(t)$ is
homogeneous with respect to the $A$\nobreakdash-grading and $M =
I_\alpha(0)$.  We occasionally regard $I_{\alpha}(t)$ as a family of
ideals over the base $\AA^1 = \Spec(\kk[t])$.

\begin{example}
  \label{e:arrows}
  If $A = 0$ and $M = \langle x^4, x^2y, y^2 \rangle$, then 
  \begin{align*}
    T_{\geq 0}(M) &= \{ (1,3,0), (1,2,0), (2,3,0), (2,2,0), (2,1,1),
    (2,0,1) \} \\
    T_{\leq 0}(M) &= \{ (0,3,0), (0,2,0), (0,1,1), (0,0,1), (1,1,1),
    (1,0,1) \} \\
    T_+(M) &= \{ (1,3,0), (2,3,0), (2,2,0), (2,1,1) \} \, .
  \end{align*}
  The insignificant arrows are $(0,0,0)$, $(0,1,0)$, $(1,0,0)$,
  $(1,1,0)$, $(2,0,0)$ and $(2,1,0)$.  If $\alpha = (1,3,0) \in
  T_+(M)$ then $k = 1$, $\ell = 1$, $m = -1$ and $I_\alpha(t) =
  \langle x^4, x^2y-tx^3, y^2-txy \rangle$.  The arrows $(1,3,0) \in
  T_+(M)$, $(0,0,1) \in T_{\leq 0}(M)$ and $(2,0,1) \in T_{\geq
    0}(M)$ are pictured in Figure~\ref{f:arrows}. \hfill $\diamond$

  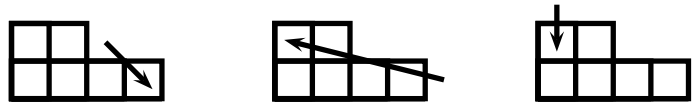
\begin{figure}[!ht]
    \psset{unit=0.5cm}
    \begin{pspicture}(0,0)(18,2.6)
      \psline[linewidth=2pt]{-}%
      (0,0)(4,0)(4,1)(0,1)(0,0)(4,0)
      \psline[linewidth=2pt]{-}%
      (0,0)(3,0)(3,1)(0,1)(0,0)(3,0)
      \psline[linewidth=2pt]{-}%
      (0,0)(2,0)(2,2)(0,2)(0,0)(2,0)
      \psline[linewidth=2pt]{-}%
      (0,0)(1,0)(1,2)(0,2)(0,0)(1,0)
      \psline[linewidth=2pt]{->}(2.5,1.5)(3.75,0.25)
      \psline[linewidth=2pt]{-}%
      (7,0)(11,0)(11,1)(7,1)(7,0)(11,0)
      \psline[linewidth=2pt]{-}%
      (7,0)(10,0)(10,1)(7,1)(7,0)(10,0)
      \psline[linewidth=2pt]{-}%
      (7,0)(9,0)(9,2)(7,2)(7,0)(9,0)
      \psline[linewidth=2pt]{-}%
      (7,0)(8,0)(8,2)(7,2)(7,0)(8,0)
      \psline[linewidth=2pt]{->}(11.5,0.5)(7.25,1.5625)
      \psline[linewidth=2pt]{-}%
      (14,0)(18,0)(18,1)(14,1)(14,0)(18,0)
      \psline[linewidth=2pt]{-}%
      (14,0)(17,0)(17,1)(14,1)(14,0)(17,0)
      \psline[linewidth=2pt]{-}%
      (14,0)(16,0)(16,2)(14,2)(14,0)(16,0)
      \psline[linewidth=2pt]{-}%
      (14,0)(15,0)(15,2)(14,2)(14,0)(15,0)
      \psline[linewidth=2pt]{->}(14.5,2.5)(14.5,1.25)
    \end{pspicture}
    \caption{     
      \label{f:arrows} 
      Three significant arrows for $\langle x^4, x^2y, y^2 \rangle$.}
  \end{figure}
\end{example}

The next result justifies our choice of generators for
$I_{\alpha}(t)$.  We write $\xel$ for the lexicographic monomial order
on $S = \kk[x,y]$ with $x < y$, and $\delta_{i,j}$ is the Kronecker
delta.

\begin{lemma}
  \label{l:gb}
  If $\alpha = (k, \ell+p_k, m+q_k) \in T_+(M)$, then the defining
  generators of $I_\alpha(t)$ form a minimal Gr\"obner basis with
  respect $\xel$ and $M = \initial_{\xel} \bigl( I_\alpha(t) \big)$.
  Moreover, there is an index $\sigma$ such that $0 \leq \sigma < k$,
  $\ell+p_{k-1} \geq p_\sigma$, $m+q_k \geq q_\sigma$, and the
  syzygies of $I_\alpha(t)$ are generated by $y^{-q_{i-1}+q_i}
  \mathbf{e}_{i-1} - x^{p_{i-1}-p_i} \mathbf{e}_i -
  \delta_{i,k}tx^{\ell+p_{k-1}-p_\sigma}y^{m+q_k-q_\sigma}
  \mathbf{e}_\sigma$ for $1 \leq i \leq n$ where $\mathbf{e}_0,
  \dotsc, \mathbf{e}_n$ is the standard basis for the $A$-graded free
  $S$-module $\bigoplus_{i=0}^{n} S\big( - \deg(x^{p_i}y^{q_i})
  \bigr)$.
\end{lemma}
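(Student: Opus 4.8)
The plan is to verify Buchberger's criterion for the listed generators and then obtain the syzygies as the lifts of the reduced $S$-polynomials. Write $g_i := x^{p_i}y^{q_i}$ for $0 \leq i < k$ and $g_i := x^{p_i}y^{q_i} - t\,x^{\ell+p_i}y^{m+q_i}$ for $k \leq i \leq n$, so $I_\alpha(t) = \langle g_0, \dotsc, g_n\rangle$. The first task is to record the sign information hidden in the hypothesis $\alpha = (k,\ell+p_k,m+q_k) \in T_+(M)$: since $\alpha$ is positive, $\ell > 0$; since $x^{\ell+p_k}y^{m+q_k}$ is a standard monomial of $M$ whereas $x^{p_k}y^{q_k}$ is a minimal generator and $\ell + p_k > p_k$, we must have $m + q_k < q_k$, that is $m < 0$, and also $m + q_k \geq 0$ because $x^{\ell+p_k}y^{m+q_k}$ is an honest monomial. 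As $\xel$ ranks $y$ above $x$ and $q_i > m + q_i$, this shows $\initial_{\xel}(g_i) = x^{p_i}y^{q_i}$ for every $i$ and every $t$. These leading terms are exactly the minimal generators of $M$, so none divides another; hence, once $\{g_0, \dotsc, g_n\}$ is shown to be a Gr\"obner basis, it is automatically minimal and $M = \initial_{\xel}(I_\alpha(t))$.

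Next I would check Buchberger's criterion. For an ideal in two variables whose leading terms are these minimal generators, the chain criterion lets us restrict to the $n$ consecutive $S$-polynomials $S_i := y^{q_i - q_{i-1}}g_{i-1} - x^{p_{i-1}-p_i}g_i$, $1 \leq i \leq n$: for $i < j$ not consecutive, the least common multiple $x^{p_i}y^{q_j}$ of $\initial_{\xel}(g_i)$ and $\initial_{\xel}(g_j)$ is divisible by $\initial_{\xel}(g_l) = x^{p_l}y^{q_l}$ for every intermediate $l$, so that pair is redundant. A one-line monomial calculation gives $S_i = 0$ when $i \neq k$ and $S_k = t\,x^{\ell+p_{k-1}}y^{m+q_k}$. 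Here the \emph{significance} of $\alpha$ is decisive: since $k > 0$ and $\alpha$ is significant, the monomial $x^{(\ell+p_k)+p_{k-1}-p_k}y^{m+q_k} = x^{\ell+p_{k-1}}y^{m+q_k}$ lies in $M$, hence is divisible by some minimal generator $x^{p_\sigma}y^{q_\sigma}$; fix such a $\sigma$. Then $\ell + p_{k-1} \geq p_\sigma$ and $m + q_k \geq q_\sigma$, and since $m < 0$ we get $q_\sigma \leq m + q_k < q_k$, which forces $\sigma < k$ because the $q_i$ strictly increase. In particular $g_\sigma = x^{p_\sigma}y^{q_\sigma}$, so $S_k = t\,x^{\ell+p_{k-1}-p_\sigma}y^{m+q_k-q_\sigma}\,g_\sigma$ reduces to $0$ in a single step. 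Thus every $S$-polynomial reduces to $0$, and $\{g_0, \dotsc, g_n\}$ is the asserted minimal Gr\"obner basis with $M = \initial_{\xel}(I_\alpha(t))$.

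For the syzygies I would invoke Schreyer's theorem: the first syzygy module of a Gr\"obner basis is generated by the lifts of the reduced $S$-polynomials, and the same chain-criterion argument leaves only the consecutive pairs. For $i \neq k$ this lift is $y^{q_i-q_{i-1}}\mathbf{e}_{i-1} - x^{p_{i-1}-p_i}\mathbf{e}_i$ (since $S_i = 0$), while the one-step reduction $S_k = t\,x^{\ell+p_{k-1}-p_\sigma}y^{m+q_k-q_\sigma}g_\sigma$ lifts the relation at $i = k$ to $y^{q_k-q_{k-1}}\mathbf{e}_{k-1} - x^{p_{k-1}-p_k}\mathbf{e}_k - t\,x^{\ell+p_{k-1}-p_\sigma}y^{m+q_k-q_\sigma}\mathbf{e}_\sigma$. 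These are precisely the generators in the statement; at $t = 0$ they specialize to the Hilbert--Burch syzygies of $M$, as they must. (Alternatively, one can arrange these $n$ vectors as the columns of an $(n+1)\times n$ matrix $\Psi(t)$, check that its maximal minors recover $g_0, \dotsc, g_n$ up to sign and a common unit, and quote the Hilbert--Burch theorem.)

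I do not expect a genuine obstacle here: the content is the bookkeeping behind the chain criterion in two variables, the verification that $\sigma < k$, and the observation that the exponents $\ell + p_{k-1} - p_\sigma$ and $m + q_k - q_\sigma$ are nonnegative --- the last being nothing more than the divisibility $x^{p_\sigma}y^{q_\sigma} \mid x^{\ell+p_{k-1}}y^{m+q_k}$ already used to produce $\sigma$. The only step demanding a little care is treating $t$ uniformly, both as a scalar and as an indeterminate, which is harmless since $\initial_{\xel}(g_i)$ is the same monomial in either case and the $S$-polynomial reductions above are independent of this distinction.
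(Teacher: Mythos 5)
Your proposal is correct and follows essentially the same route as the paper: reduce Buchberger's criterion to adjacent pairs (the paper cites the description of minimal syzygies of a monomial ideal in two variables, you use the chain criterion), observe that the only nonzero S-polynomial is $t\,x^{\ell+p_{k-1}}y^{m+q_k}$, use the significance of $\alpha$ (with $\ell>0$, $m<0$) to find $\sigma<k$ with $x^{p_\sigma}y^{q_\sigma}$ dividing it, and then obtain the stated syzygies from Schreyer's theorem (Eisenbud, Theorem~15.10). Your explicit verifications of the sign conditions and of $\sigma<k$ are fine and match the paper's argument.
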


\begin{proof}
  Since the minimal generators of $M$ are the initial terms with
  respect to $\xel$ of the defining generators of $I_\alpha(t)$, it
  suffices to show that these generators form a Gr\"{o}bner basis.  By
  Buchberger's criterion \cite{Eisenbud}*{Exercise~15.19}, we need
  only prove that certain S-polynomials reduce to zero modulo the
  generators of $I_\alpha(t)$, namely those for pairs of generators
  corresponding to the minimal syzygies of $M$.  For any monomial
  ideal in the ring $S = \kk[x,y]$, Proposition~3.1 of
  \cite{MillerSturmfels} shows that the minimal syzygies correspond to
  adjacent pairs of minimal generators.  The S-polynomial between any
  pair of monomials is always zero.  For any pair of adjacent binomial
  generators in $I_\alpha(t)$, the S-polynomial is
  \[
  y^{-q_{i-1}+q_i}(x^{p_{i-1}}y^{q_{i-1}} -
  tx^{\ell+p_{i-1}}y^{m+q_{i-1}}) - x^{p_{i-1}-p_i}(x^{p_i}y^{q_i} -
  tx^{\ell+p_i}y^{m+q_i}) = 0 \, ,
  \]
  where $k < i \leq n$.  Hence, the final S-polynomial to examine is
  \[
  y^{-q_{k-1}+q_k}(x^{p_{k-1}}y^{q_{k-1}}) -
  x^{p_{k-1}-p_k}(x^{p_k}y^{q_k} - tx^{\ell+p_k} y^{m+q_k}) =
  tx^{\ell+p_{k-1}}y^{m+q_k} \, .
  \]
  Since $\alpha \in T_+(M)$, we have $\ell > 0$ and $m < 0$, so the
  monomials $x^{p_i}y^{q_i}$ for $i \geq k$ cannot divide
  $x^{\ell+p_{k-1}}y^{m+q_k}$.  However, $\alpha \in T_+(M)$ implies
  that $x^{\ell+p_{k-1}}y^{m+q_k} \in M$, so
  $x^{\ell+p_{k-1}}y^{m+q_k}$ is divisible by at least one of the
  monomials $x^{p_{\sigma}}y^{q_{\sigma}}$ for $\sigma < k$.
  Therefore, the final S-polynomial reduces to zero modulo the
  generators of $I_\alpha(t)$.  The assertion about the syzygies of
  $I_\alpha(t)$ then follows from \cite{Eisenbud}*{Theorem~15.10}.
\end{proof}

\begin{example}
  \label{e:arrowsSyz}
  If $A = 0$, $M = \langle x^4, x^2y, y^2 \rangle$ and $\alpha =
  (1,3,0) \in T_+(M)$ as in Example~\ref{e:arrows}, then the syzygies
  of the $\alpha$-edge ideal $I_{\alpha}(t)$ are generated by
  $y\mathbf{e}_0 - x^2 \mathbf{e}_1 - xt \mathbf{e}_0$ and $y
  \mathbf{e}_1 - x^2 \mathbf{e}_2$; here $\sigma = 0$. \hfill
  $\diamond$
\end{example}

\begin{example}
  \label{e:syz}
  If $A = 0$, $M = \langle x^7, x^6y, x^5y^2, x^4y^3, x^2y^4, y^6
  \rangle$, and $\alpha = (4,3,2) \in T_+(M)$, then the syzygies of
  the $\alpha$-edge ideal $I_{\alpha}(t) := \langle x^7, x^6y, x^5y^2,
  x^4y^3, x^2y^4-tx^3y^2, y^6-txy^4 \rangle$ are generated by
  $y\mathbf{e}_0-x\mathbf{e}_1$, $y\mathbf{e}_1-x\mathbf{e}_2$,
  $y\mathbf{e}_2-x\mathbf{e}_3$, $y\mathbf{e}_3-x^2\mathbf{e}_4
  -t\mathbf{e}_2$ and $y^2\mathbf{e}_4-x^2\mathbf{e}_5$; the index
  $\sigma$ is $2$. \hfill $\diamond$
\end{example}

Following \cite{Yameogo}*{Th\'eor\`eme~3.2} (also see
\cite{Evain}*{Definition~17}), we introduce a partial order on the set
of all monomial ideals with a given Hilbert function.  Given two
monomial ideals $M$ and $M'$ with the same Hilbert function, we say
$M' \succcurlyeq M$ if, for all monomials $x^ry^s \in S$, the number
of standard monomials for $M'$ with degree equal to $\deg(x^ry^s)$
lexicographically less than or equal to $x^ry^s$ is at least the
number of standard monomials for $M$ with degree equal to
$\deg(x^ry^s)$ lexicographically less than or equal to $x^ry^s$.  The
reflexivity, antisymmetry and transitivity of $\succcurlyeq$ follow
from the properties of the canonical order on $\NN$.  Given a Hilbert
function $h \colon A \to \NN$, let $\mathcal{P}_h$ denote the poset of
all monomial ideals with Hilbert function $h$.  If $M' \neq M$ and $M'
\succcurlyeq M$, then we write simply $M' \succ M$.

\begin{remark}
  Following \cite{MillerSturmfels}*{\S3.1}, we identify a monomial
  ideal $M$ in $S = \kk[x,y]$ with its staircase diagram.  When the
  Hilbert function $h \colon A \to \NN$ of $M$ satisfies $|h| <
  \infty$, the staircase diagram of $M$ is a Young diagram (in the
  French tradition).  Hence, the rows of the diagram correspond to the
  parts of a partition of $|h|$.  When $A = 0$, and $|h| < \infty$,
  the partial order $\succ$ is the dominance order applied to 
  the conjugate partitions.
\end{remark}

\begin{example}
  \label{e:poset}
  Suppose that $A = \ZZ$ and $\deg(x) = 1 = \deg(y)$.  Among the
  eleven monomial ideals of colength six in $S$, there are exactly six
  monomial ideals with Hilbert function given by $h(0) = 1$, $h(1) =
  2$, $h(2) = 2$, $h(3) = 1$ and $h(a) = 0$ for all $a \geq 3$.
  Figure~\ref{f:poset} illustrates the Hasse diagram for the poset
  $\mathcal{P}_h$. \hfill $\diamond$

  \begin{figure}[!ht]
    \psset{unit=0.25cm}
    \begin{pspicture}(0,0)(15,21)
      \psline[linewidth=2pt]{-}%
      (5,0)(9,0)(9,1)(7,1)(7,2)(5,2)(5,0)(8,0)(8,1)(5,1)(5,2)%
      (6,2)(6,0)(7,0)(7,1)
      \psline[linewidth=2pt]{-}%
      (0,5.5)(3,5.5)(3,7.5)(0,7.5)(0,5.5)(1,5.5)(1,7.5)(2,7.5)%
      (2,5.5)(3,5.5)(3,6.5)(0,6.5)
      \psline[linewidth=2pt]{-}%
      (11,5)(15,5)(15,6)(12,6)(12,8)(11,8)(11,5)(14,5)(14,6)(11,6)%
      (11,7)(12,7)(12,5)(13,5)(13,6)
      \psline[linewidth=2pt]{-}%
      (0,11)(3,11)(3,12)(1,12)(1,15)(0,15)(0,11)(2,11)(2,12)(0,12)%
      (0,13)(1,13)(1,11)(1,14)(0,14)
      \psline[linewidth=2pt]{-}%
      (11.5,11.5)(13.5,11.5)(13.5,14.5)(11.5,14.5)(11.5,11.5)%
      (12.5,11.5)(12.5,14.5)(11.5,14.5)(11.5,13.5)(13.5,13.5)%
      (13.5,12.5)(11.5,12.5)
      \psline[linewidth=2pt]{-}%
      (6,17)(8,17)(8,19)(7,19)(7,21)(6,21)(6,17)(7,17)%
      (7,19)(6,19)(6,18)(8,18)(6,18)(6,20)(7,20)
      \psline[linewidth=2pt]{-}(7,16.5)(2,15)
      \psline[linewidth=2pt]{-}(7,16.5)(12.5,15)
      \psline[linewidth=2pt]{-}(2,10.5)(2,8)
      \psline[linewidth=2pt]{-}(2,10.5)(12.5,8.5)
      \psline[linewidth=2pt]{-}(12.5,11)(12.5,8.5)
      \psline[linewidth=2pt]{-}(12.5,11)(2,8)
      \psline[linewidth=2pt]{-}(2,5)(7,2.5)
      \psline[linewidth=2pt]{-}(12.5,4.5)(7,2.5)
    \end{pspicture}
    \caption{
      \label{f:poset}
      Hasse diagram for the poset in Example~\ref{e:poset}.
    }
  \end{figure}
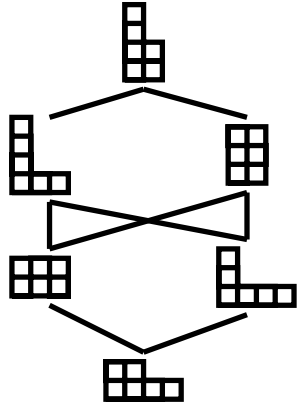
\end{example}

The next lemma records a well-known geometric interpretation for
Gr\"obner bases.  We write $\lex$ for the lexicographic monomial order
on $S = \kk[x,y]$ with $x > y$.

\begin{lemma}
  \label{l:curve}
  Given an $S$-ideal $I$ corresponding to a point on $\Hilb_S^h$, the
  Gr\"obner degenerations of $I$ with respect to $\lex$ and $\xel$
  describe an irreducible rational curve on $\Hilb_S^h$ containing the
  points corresponding to $I$, $\initial_{\xel}(I)$ and
  $\initial_{\lex}(I)$.
\end{lemma}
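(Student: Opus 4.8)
The plan is to realize $\initial_\lex(I)$ and $\initial_\xel(I)$ as the two torus-fixed limits of a single one-parameter Gr\"obner degeneration of $I$, and to read off the asserted curve as the image of the resulting morphism from $\PP^1$. Since $[I]$ lies on $\Hilb_S^h$ with $|h| < \infty$, we have $\dim_\kk S/I < \infty$, so only finitely many monomials are relevant to $I$ and there is a weight vector $w = (w_1,w_2) \in \ZZ^2$ with $\initial_w(I) = \initial_\lex(I)$: any $w$ in the interior of the appropriate chamber of the Gr\"obner fan of $I$ works, for instance $w = (M,1)$ for $M$ large. First I would invoke the standard structure of a Gr\"obner degeneration (see, e.g., \cite{Eisenbud}*{\S15.8} or \cite{MillerSturmfels}*{\S2--3}): completing a generating set of $I$ to a $w$-Gr\"obner basis and homogenizing degree by degree with respect to $w$ produces an ideal $\widetilde{I} \subseteq S[t]$ such that $S[t]/\widetilde{I}$ is flat over $\kk[t]$, the fiber over $t = 0$ is $\initial_w(I) = \initial_\lex(I)$, and the fiber over any $t = c \neq 0$ is the image of $I$ under the automorphism $x \mapsto c^{w_1}x$, $y \mapsto c^{w_2}y$; in particular the fiber over $t = 1$ is $I$. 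Each Gr\"obner basis element is $A$-homogeneous and the homogenization respects the $A$-grading, so $\widetilde{I}$ is $A$-homogeneous and every fiber is admissible with Hilbert function $h$. Thus $\widetilde{I}$ defines a morphism $\AA^1 \to \Hilb_S^h$ sending $0 \mapsto [\initial_\lex(I)]$ and $1 \mapsto [I]$.

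Next I would package this as a torus action and extend to $\PP^1$. The weight $w$ defines a $\kk^*$-action on $S$ by $\lambda\cdot(x,y) = (\lambda^{w_1}x,\lambda^{w_2}y)$; since this merely rescales each monomial it acts by $A$-graded automorphisms, so it induces a $\kk^*$-action on $\Hilb_S^h$ and the orbit map $\kk^* \to \Hilb_S^h$, $\lambda \mapsto \lambda\cdot[I]$, is a morphism. The degeneration of the previous paragraph extends this orbit map across $\lambda = 0$ with value $[\initial_w(I)]$, and running the same construction with $-w$ extends it across $\lambda = \infty$ with value $[\initial_{-w}(I)]$. Because $\Hilb_S^h$ is separated these extensions agree on $\kk^*$ and glue to a morphism $\gamma\colon\PP^1 \to \Hilb_S^h$ with $\gamma(1) = [I]$, $\gamma(\infty) = [\initial_w(I)] = [\initial_\lex(I)]$, and $\gamma(0) = [\initial_{-w}(I)]$. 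It then remains only to arrange that $[\initial_{-w}(I)]$ is $[\initial_\xel(I)]$ — or, failing that, that $\initial_\lex(I) = \initial_\xel(I)$, in which case the single morphism $\AA^1 \to \Hilb_S^h$ already suffices. Either way the image of $\gamma$ (or of the $\AA^1$) is irreducible, being the image of $\PP^1$ (or $\AA^1$) under a morphism, and rational, being dominated by $\PP^1$; so it is an irreducible rational curve — degenerating to a point exactly when $I$ is itself a monomial ideal — and it contains the points corresponding to $I$, $\initial_\xel(I)$ and $\initial_\lex(I)$.

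The hard part will be the middle step: showing that $w$ can be chosen so that $\initial_{-w}(I) = \initial_\xel(I)$, and identifying when instead the two initial ideals simply coincide. This is where having only two variables is used. Every monomial comparison entering the computation of an initial ideal is between monomials $x^py^q$ and $x^{p'}y^{q'}$ of a common $A$-degree, so $(p-p')\deg(x) = (q'-q)\deg(y)$ in $A$; a case analysis on this relation — whether $\deg(x)$ and $\deg(y)$ are $\ZZ$-linearly independent, proportional with the same or opposite sign, or torsion (cf.\ Remark~\ref{r:nonpositiveGrading}) — shows that on the finitely many relevant pairs the orders $\lex$ and $\xel$ are either mutually opposite or identical, so that $w$ may be chosen with $\initial_w(I) = \initial_\lex(I)$ and $\initial_{-w}(I) = \initial_\xel(I)$ whenever $\initial_\lex(I) \neq \initial_\xel(I)$. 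I expect the genuine subtlety to be concentrated precisely in the torsion and nonpositive cases, where a graded component $S_a$ can contain many monomials and the naive choice $w = (1,0)$ need not realize $\lex$; the flatness of the degeneration, by contrast, is the content of the cited Gr\"obner-basis results and works verbatim over an arbitrary field and hence over $\ZZ$.
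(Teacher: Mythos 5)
Your construction is the same as the paper's: a single weight vector $w$, the flat family of \cite{Eisenbud}*{Theorem~15.17} over $\PP^1$ obtained by gluing the two $\AA^1$-degenerations along the $\kk^*$-orbit, and the fine moduli property.  You are also right that the whole difficulty is the existence of $w$ with $\initial_w(I) = \initial_{\lex}(I)$ and $\initial_{-w}(I) = \initial_{\xel}(I)$ (the paper compresses exactly this point into its citation of \cite{Eisenbud}*{Proposition~15.16}, which by itself only produces a weight realizing one order).  But the dichotomy you propose to settle it --- that on the relevant pairs $\lex$ and $\xel$ are either mutually opposite or identical --- is false precisely in the nonpositive cases you flag.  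Take $A = 0$, so $|h|$ is just the colength, and $I := \langle y - x + x^2,\, x^3 - x^2 \rangle$, the union of a tangent vector at the origin along $x=y$ and the reduced point $(1,0)$.  The reduced Gr\"obner bases are $\{\,y - x + x^2,\; x^3 - x^2\,\}$ for $\xel$ and $\{\,x^2 - x + y,\; xy,\; y^2\,\}$ for $\lex$, so $\initial_{\xel}(I) = \langle y, x^3\rangle \neq \langle x^2, xy, y^2\rangle = \initial_{\lex}(I)$.  Now if $\initial_w(I)$ equals a monomial initial ideal $\initial_{>}(I)$, then $\initial_w(g)$ must be the leading term of $g$ for every $g$ in the reduced Gr\"obner basis, because $\initial_w(g)$ lies in the monomial ideal $\initial_{>}(I)$ while all trailing terms of $g$ are standard.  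Applied to $x^3 - x^2$ this forces $w_1 > 0$ on the $\xel$ side, and applied to $x^2 - x + y$ it forces $w_1 > 0$ on the $\lex$ side; hence no weight works antipodally, and your fallback $\initial_{\lex}(I) = \initial_{\xel}(I)$ fails as well.  Concretely, the comparison $(x^3, x^2)$ is one on which the two orders agree while $(x^2, y)$ is one on which they are opposite, and both occur for this single ideal, contradicting the claimed dichotomy.  So the step you defer as ``the hard part'' is a genuine gap, and it cannot be filled along the lines you sketch.

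It is worth seeing how the paper survives the same issue: the full strength of the lemma is only ever invoked for the edge ideals $I_{\alpha}(t)$, which are homogeneous for the auxiliary $(\ZZ^2/\ZZ[\begin{smallmatrix} \ell \\ m \end{smallmatrix}])$-grading, so every comparison inside the ideal is along $\pm(\ell,m)$ and an antipodal weight does exist (this is exactly the two-initial-ideals argument in the proof of Proposition~\ref{p:edgeideal}); in every other application only the single degeneration from $I$ to $\initial_{\xel}(I)$ is used, for which one $\AA^1$-family suffices.  A correct completion of your write-up should therefore either prove the antipodal-weight statement under such an extra homogeneity hypothesis, or weaken the conclusion for a general ideal to a chain of (at most two) irreducible rational curves through $[I]$, $[\initial_{\xel}(I)]$ and $[\initial_{\lex}(I)]$.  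The remaining parts of your argument --- flatness of the $w$-homogenization, preservation of the $A$-grading and admissibility, and the gluing over $\PP^1$ via separatedness --- are fine and agree with the paper.
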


\begin{proof}
  Proposition~15.16 in \cite{Eisenbud} gives a weight vector $w \in
  \ZZ^2$ such that $\initial_{w}(I) = \initial_{\xel}(I)$ and
  $\initial_{-w}(I) = \initial_{\lex}(I)$.  Applying Theorem~15.17 in
  \cite{Eisenbud}, we obtain a flat family of admissible ideals over
  $\PP^1$ in which the fibres over $0$, $1$ and $\infty$ are
  $\initial_{\xel}(I)$, $I$ and $\initial_{\lex}(I)$ respectively.
  Since $\Hilb_S^h$ is a fine moduli space, this family gives a map
  from $\PP^1$ to $\Hilb_S^h$ whose image contains the points
  corresponding to $I$, $\initial_{\xel}(I)$ and $\initial_{\lex}(I)$.
\end{proof}

We now apply Lemma~\ref{l:curve} to describe the irreducible rational
curve on $\Hilb_S^h$ associated to the positive significant arrow
$\alpha \in T_{+}(M)$.

\begin{proposition} 
  \label{p:edgeideal}
  Let $M$ be a monomial ideal in $S$.  If $\alpha \in T_+(M)$ and $t
  \neq 0$, then $I_\alpha(t)$ has exactly two initial ideals, namely
  $M = \initial_{\xel}\bigl( I_\alpha(t) \bigr)$ and $M' :=
  \initial_{\lex}\bigl( I_\alpha(t) \bigr)$.  Moreover, we have $M'
  \succ M$ and, on $\Hilb_S^h$, the points corresponding to $M$ and
  $M'$ lie on an irreducible rational curve.
\end{proposition}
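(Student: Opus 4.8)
The plan is to treat the three assertions in turn: Lemma~\ref{l:gb} already settles the $\xel$-side, a Gr\"obner-fan count gives ``exactly two'' initial ideals, a matroid/greedy argument gives $M'\succ M$, and Lemma~\ref{l:curve} produces the curve.

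First I would deal with the initial ideals. Write $I:=I_\alpha(t)$ with $t\neq 0$; Lemma~\ref{l:gb} gives $\initial_\xel(I)=M$, and $\ell>0$, $m<0$ (as recorded there). The key observation is that $I$ is homogeneous with respect to the \emph{positive} $\ZZ$-grading defined by $\deg(x)=-m$ and $\deg(y)=\ell$, since the two monomials of each binomial generator $x^{p_i}y^{q_i}-t\,x^{\ell+p_i}y^{m+q_i}$ both have degree $-m\,p_i+\ell\,q_i$ for this grading. Consequently the Gr\"obner region of $I$ is all of $\mathbb{R}^2$, so the Gr\"obner fan of $I$ is a complete fan in $\mathbb{R}^2$ whose lineality space contains the line $\mathbb{R}(-m,\ell)$. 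Now $I$ is not a monomial ideal: otherwise $I$ would equal its initial ideal $M$, but the generator $x^{p_k}y^{q_k}-t\,x^{\ell+p_k}y^{m+q_k}$ involves the monomial $x^{\ell+p_k}y^{m+q_k}$, which is a standard monomial of $M$ (it is the head of the arrow $\alpha$), so this generator cannot lie in $M$. Hence the lineality space is exactly $\mathbb{R}(-m,\ell)$, the quotient fan is a nontrivial complete fan in the one-dimensional space $\mathbb{R}^2/\mathbb{R}(-m,\ell)$, and such a fan has exactly two maximal cones. Therefore $I$ has exactly two monomial initial ideals; one is $M$, and the other is $M':=\initial_\lex(I)$, which differs from $M$ because $x^{\ell+p_k}y^{m+q_k}$ is the $\lex$-leading term of a defining generator of $I$, hence lies in $M'$, while it is standard for $M$.

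Next I would prove $M'\succcurlyeq M$; together with $M\neq M'$ this gives $M'\succ M$. Fix $a\in A$ and work inside $S_a$. For a subspace $V\subseteq S_a$ and any term order, the degree-$a$ standard monomials of $V$ are produced by the greedy rule: list the degree-$a$ monomials in increasing order of the term order and keep each one whose image in $S_a/V$ is linearly independent of the images of those already kept. For $\lex$ this processing order is by increasing $x$-exponent, so by the matroid exchange property, the kept set intersected with the first $j$ monomials (in increasing $\lex$ order) has the maximum size among subsets of those $j$ monomials that are independent modulo $V$; the standard-monomial set of any other term order is also independent modulo $V$, so it meets those same $j$ monomials in a subset of size at most this maximum. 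Taking $V=I_a$, the other term order $\xel$, and letting $j$ vary, this says exactly that for every monomial $x^ry^s$ of degree $a$ the number of $\lex$-standard monomials of $I$ of degree $a$ that are $\lex$-at most $x^ry^s$ is at least the corresponding count for $\xel$; quantifying over $a$ gives $M'=\initial_\lex(I)\succcurlyeq\initial_\xel(I)=M$. (Nothing changes in $A$-degrees where $S_a$ is infinite-dimensional, since the argument only uses finite-dimensionality of $S_a/V$.)

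Finally, $I=I_\alpha(t)$ is $A$-homogeneous and admissible with Hilbert function $h$, because $\dim_\kk(S/I)_a=\dim_\kk(S/\initial_\xel I)_a=\dim_\kk(S/M)_a=h(a)<\infty$ for all $a$; likewise $M'$ has Hilbert function $h$. Thus $I$ determines a $\kk$-point of $\Hilb_S^h$, and Lemma~\ref{l:curve} applied to $I$ yields an irreducible rational curve on $\Hilb_S^h$ through the points of $I$, of $\initial_\xel(I)=M$, and of $\initial_\lex(I)=M'$, as required. I expect the main obstacle to be the ``exactly two'' step: one must be sure the Gr\"obner region of a positively graded ideal in two variables really is all of $\mathbb{R}^2$ and that the lineality of its Gr\"obner fan is precisely $\mathbb{R}(-m,\ell)$ once the ideal is nonmonomial. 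A more hands-on alternative is to mimic the Buchberger computation in Lemma~\ref{l:gb} to show the defining generators of $I_\alpha(t)$ are also a Gr\"obner basis with respect to $\lex$; this would additionally give the explicit description $M'=\langle x^{p_i}y^{q_i}:0\leq i<k\rangle+\langle x^{\ell+p_i}y^{m+q_i}:k\leq i\leq n\rangle$.
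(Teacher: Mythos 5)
Your main argument is correct, and its most interesting step diverges from the paper's. For the count of initial ideals the paper makes the same underlying observation as you do---$I_\alpha(t)$ is homogeneous for the $\ZZ^2/\ZZ\bigl[\begin{smallmatrix}\ell\\ m\end{smallmatrix}\bigr]$-grading---but argues directly that a polynomial homogeneous for this grading has only two possible initial terms, namely those picked out by $\lex$ and $\xel$; your Gr\"obner-fan packaging (complete fan by positivity of $(-m,\ell)$, lineality equal to the homogeneity line because $I_\alpha(t)$ is not a monomial ideal) is the same idea with more machinery, and your check that $x^{\ell+p_k}y^{m+q_k}\in M'\setminus M$ replaces the paper's terser deduction of $M\neq M'$ from $I_\alpha(t)\neq M$. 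Where you genuinely differ is the step $M'\succ M$: the paper exploits the binomial structure of $I_\alpha(t)$, taking a lex-minimal violating monomial $x^ry^s$, observing that its normal form modulo the $\lex$-Gr\"obner basis is a single monomial $x^uy^v$, and contradicting $M=\initial_{\xel}\bigl(I_\alpha(t)\bigr)$; your greedy/matroid argument instead proves the stronger general fact that $\initial_{\lex}(I)\succcurlyeq\initial_{<}(I)$ for every term order $<$ and every $A$-homogeneous ideal $I$ with finitely many standard monomials in each degree, using no special structure of $I_\alpha(t)$. Both are correct: the paper's is elementary and self-contained, while yours is more conceptual and reusable, and (as you note) it survives degrees with $\dim_\kk S_a=\infty$ because the lex-downsets within a fixed degree have finite rank modulo $I_a$. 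The curve step is identical, via Lemma~\ref{l:curve}.

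One warning about your closing aside: the proposed shortcut is false. The defining generators of $I_\alpha(t)$ are in general not a Gr\"obner basis for $\lex$, and $M'$ is not $\langle x^{p_i}y^{q_i}: 0\leq i<k\rangle+\langle x^{\ell+p_i}y^{m+q_i}: k\leq i\leq n\rangle$. In Example~\ref{e:arrows}, where $I_\alpha(t)=\langle x^4,\, x^2y-tx^3,\, y^2-txy\rangle$, that recipe yields $\langle x^3,xy\rangle$, which has infinite colength and so cannot have Hilbert function $h$, whereas in fact $M'=\langle x^3,xy,y^4\rangle$; the generator $y^4$ only appears after S-polynomial reductions. This does not affect your main proof, which never uses the aside, but the claimed explicit description of $M'$ should be dropped.
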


\begin{proof}
  Let $\alpha = (k,\ell+p_k, m+q_k)$ and consider the vector $[
  \begin{smallmatrix} 
    \ell \\ m 
  \end{smallmatrix} 
  ] \in \ZZ^2$.  By construction, the ideal $I_\alpha(t)$ is
  homogeneous with respect to the induced $(\ZZ^2/\ZZ [
  \begin{smallmatrix} 
    \ell \\ m 
  \end{smallmatrix} 
  ] )$-grading of $S$.  A polynomial in $S$ that is homogeneous with
  respect to this grading has only two possible initial terms.
  Moreover, these two initial terms are given by $\lex$ and $\xel$.
  Hence, there are only two equivalence classes of monomial orders
  with respect to $I_\alpha(t)$.  It follows that $I_\alpha(t)$ has at
  most two distinct initial ideals.  Lemma~\ref{l:gb} establishes that
  $M = \initial_{\xel}\bigl( I_\alpha(t) \bigr)$.  Lemma~\ref{l:curve}
  shows that the Gr\"obner degenerations of $I_{\alpha}(t)$ give with
  an irreducible rational curve on $\Hilb_S^h$ containing the points
  corresponding to $M$, $I_{\alpha}(t)$ and $M'$, so it remains to
  show that $M' \succ M$.

  Since $t \neq 0$, we know $I_\alpha(t) \neq M$, so $M =
  \initial_{\xel}\bigl( I_\alpha(t) \bigr)$ implies $M \neq M'$.
  Suppose that $M' \nsucc M$; this means there exists a monomial
  $x^ry^s \in S$ such that the number of standard monomials for $M'$
  with degree equal to $\deg(x^ry^s)$ lexicographically less than or
  equal to $x^ry^s$ is strictly less than the number of such standard
  monomials for $M$.  Choosing $x^ry^s$ to be the lexicographically
  smallest monomial with this property guarantees that $x^ry^s \in
  M'$, $x^ry^s \not\in M$ and each monomial lexicographically less
  than or equal to $x^ry^s$ with degree equal to $\deg(x^ry^s)$ is
  either in both of $M'$ and $M$ or in neither monomial ideal.
  Because $I_\alpha(t)$ is a binomial ideal, the remainder of $x^ry^s$
  on division by the Gr\"{o}bner basis for $I_\alpha(t)$ with respect
  to $\lex$ is a monomial, say $x^uy^v$.  Since $M' =
  \initial_{\lex}\bigl( I_\alpha(t) \bigr)$, we have $x^uy^v \not\in
  M'$, so $x^ry^s \neq x^uy^v$.  Hence, $x^ry^s -t^{\lambda} x^uy^v
  \in I_\alpha(t)$ for some $\lambda >0$ and $x^ry^s \lex x^uy^v$
  which implies that $x^uy^v \not\in M$.  But this means
  $\initial_{\xel}(x^ry^s - t^{\lambda} x^uy^v) \not\in M =
  \initial_{\xel}\bigl( I_\alpha(t) \bigr)$ which is a contraction.
\end{proof}

\begin{example}
  If $A = 0$, $M = \langle x^4, x^2y, y^2 \rangle$ and $\alpha =
  (1,3,0) \in T_+(M)$ as in Example~\ref{e:arrows}, then we have
  $I_\alpha(t) = \langle x^4,x^2y-tx^3, y^2-txy \rangle$ and its
  initial ideals are $M = \initial_{\xel}\bigl( I_\alpha(t) \bigr)$
  and $M' := \langle x^3, xy, y^4 \rangle = \initial_{\lex}\bigl(
  I_\alpha(t) \bigr)$.  The map $[ z_0 \colon z_1 ] \mapsto \langle
  x^4, z_0x^2y- z_1 x^3, z_0y^2-z_1xy, y^4 \rangle$ induces a morphism
  from $\PP^1$ to the appropriate multigraded Hilbert scheme.  In
  particular, we have $[1 \colon 0] \mapsto M$, $[0 \colon 1] \mapsto
  M'$, and $[1 \colon t] \mapsto I_\alpha(t)$. \hfill $\diamond$
\end{example}

For a Hilbert function $h \colon A \to \NN$ satisfying $|h| := \sum_{a
  \in A} h(a) < \infty$, $|h|$ equals the colength of the ideals
parametrized by $\Hilb_S^h$.

\begin{proposition} 
  \label{p:lexmost}
  For a Hilbert function $h \colon A \to \NN$ with $|h| < \infty$,
  there exists a unique monomial ideal $L_h \in \mathcal{P}_h$ such
  that $T_+(L_h) = \varnothing$.  Thus, the poset $\mathcal{P}_h$ has
  a unique maximal element.
\end{proposition}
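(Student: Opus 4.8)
The plan is to prove two things: first, that any monomial ideal $M \in \mathcal{P}_h$ with $T_+(M) = \varnothing$ is a maximal element of $\mathcal{P}_h$, and second, that such an ideal exists and is unique. For the first claim, Proposition~\ref{p:edgeideal} gives exactly the tool needed: if $M' \succ M$ for some $M' \in \mathcal{P}_h$, I would like to produce a positive significant arrow for $M$. So the heart of the existence/maximality half is a combinatorial lemma: \emph{if $M$ is not maximal in $\mathcal{P}_h$, then $T_+(M) \neq \varnothing$}. Granting this, an ideal with $T_+(M)=\varnothing$ is automatically maximal; and since $\mathcal{P}_h$ is a finite poset it has at least one maximal element $L_h$, which must then satisfy $T_+(L_h)=\varnothing$.

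To prove the combinatorial lemma, I would argue contrapositively and by an exchange/straightening argument. Suppose $T_+(M)=\varnothing$; I want to show $M$ is the \emph{unique} top element, which simultaneously handles maximality and uniqueness. Given any $M' \in \mathcal{P}_h$ with $M' \neq M$, consider the lexicographically smallest monomial $x^r y^s$ witnessing a discrepancy in the defining inequalities for $\succcurlyeq$; as in the proof of Proposition~\ref{p:edgeideal}, one can arrange that $x^r y^s \in M' \setminus M$ (or vice versa) and that all lex-smaller monomials of the same degree agree. The condition $T_+(M) = \varnothing$ should force that whenever a standard monomial $x^u y^v$ of $M$ has the same degree as a minimal generator $x^{p_i}y^{q_i}$ with $i>0$ and $u \geq p_i$, the translate $x^{u+p_{i-1}-p_i}y^v$ is \emph{not} in $M$ — i.e. $M$ is ``as lex-large as its Hilbert function permits'' in each degree. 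Turning this into the statement that $M \succcurlyeq M'$ for all $M'$, hence $M = L_h$, is the crux. Concretely I would show: in each degree $a \in A$, the standard monomials of $M$ form a ``lex-down-closed'' set among degree-$a$ monomials as far as the staircase allows, so $M$ maximizes, degree by degree and prefix by prefix, the count of lex-small standard monomials.

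For uniqueness: if $L_h$ and $L_h'$ both had $T_+ = \varnothing$, then both are maximal, and the degree-by-degree maximality just established forces $L_h \succcurlyeq L_h'$ and $L_h' \succcurlyeq L_h$, whence $L_h = L_h'$ by antisymmetry of $\succcurlyeq$. Finally, $\mathcal{P}_h$ having a unique maximal element which dominates everything means it has a maximum.

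\medskip
\noindent
The main obstacle I anticipate is the degree-by-degree analysis in the torsion / nonpositive-grading case: when $\dim_\kk S_0 = \infty$ a graded piece $S_a$ can be infinite-dimensional and contain utterly insignificant arrows, so ``lex-down-closed within degree $a$'' needs care — one must check that $T_+(M) = \varnothing$ still pins down the standard monomials of $M$ uniquely in each degree. Remark~\ref{r:nonpositiveGrading} will be the key input here, identifying exactly when such pathological arrows can occur. The positive-grading case, where each $S_a$ is finite-dimensional and lex-segments behave well, should be comparatively routine; the real content is reconciling the two.
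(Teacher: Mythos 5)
Your existence argument is fine and matches the paper: $\mathcal{P}_h$ is a finite nonempty poset (nonempty because $h$ is a Hilbert function, so some $\initial_{\xel}(I)$ lies in it), so it has a maximal element, and Proposition~\ref{p:edgeideal} forces any maximal element to have no positive significant arrows. The problem is uniqueness, which you correctly identify as ``the crux'' but never actually prove. Your plan is to show directly that $T_+(M)=\varnothing$ forces $M \succcurlyeq M'$ for every $M' \in \mathcal{P}_h$, via a degree-by-degree claim that the standard monomials of $M$ are ``lex-down-closed as far as the staircase allows.'' That phrase is doing all the work and is never made precise; as stated it is not even a correct description of the ideals in question. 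Example~\ref{e:nonlex} exhibits the unique ideal with $T_+=\varnothing$ having $y^7 \in M'$ while $x \notin M'$ with $\deg(x)=\deg(y^7)$ and $x \lex y^7$, so the standard monomials in a fixed degree are genuinely not a lex-segment, and your intermediate assertion is also off: $T_+(M)=\varnothing$ says nothing about arrows $(i,u,v)$ with $u = p_i$ (these are nonnegative but not positive), and such configurations with $x^{u+p_{i-1}-p_i}y^v \in M$ really do occur for the lex-most ideal under torsion gradings (they are exactly the arrows $(i,p_i,v) \in T(L_h)$ used in the proof of Proposition~\ref{p:lexmostsmooth}). So the combinatorial heart of the proposition is missing, and it is precisely the nonpositive/torsion case you flag as a ``worry'' where the sketched approach breaks.

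For comparison, the paper proves uniqueness by induction on the colength $|h|$: given $M$ with $T_+(M)=\varnothing$, one checks that the colon ideal $(M\colon y)$ again has no positive significant arrows and has strictly smaller colength $|h|-p_0$, so it is determined by induction; it then remains to show that all such ideals share the same minimal generator $x^{p_0}$, which is done by a two-case analysis on whether there is a standard monomial $x^uy^v$ of degree $\deg(x^{p_0-1})-\deg(y)$ with $x^uy^{v+1}\in M$, the second case exploiting the identity $\deg(y^{v+1})=0$ that can only arise for nonpositive gradings. Some mechanism of this kind --- either this colon-ideal induction or an honest proof of your per-degree counting claim --- is required; without it the proposal establishes existence but not uniqueness, and hence not the uniqueness of the maximal element either.
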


\noindent
We call the monomial ideal $L_h$ of Proposition~\ref{p:lexmost} the
\define{lex-most} ideal with Hilbert function $h$.

\begin{proof}[Proof of Existence]
  Asserting $h \colon A \to \NN$ is a Hilbert function means that
  there exists an ideal $I$ with Hilbert function equal to $h$.
  Hence, $M = \initial_{\xel}(I)$ is a monomial ideal with Hilbert
  function $h$.  There are only finitely many monomial ideals with
  Hilbert function $h$, so the poset $\mathcal{P}_h$ has at least one
  maximal element.  Proposition~\ref{p:edgeideal} shows that $M \in
  \mathcal{P}_h$ is not maximal when $T_+(M) \neq \varnothing$.
  Therefore, there is at least one monomial ideal $L_h \in
  \mathcal{P}_h$ with $T_+(L_h) = \varnothing$.
\end{proof}

\begin{proof}[Proof of Uniqueness]
  We induct on $|h|$.  Proposition~\ref{p:edgeideal} shows that only
  monomial ideals with no positive significant arrows can be maximal
  elements of $\mathcal{P}_h$.  Suppose that the monomial ideal $M =
  \langle x^{p_0}y^{q_0}, \dotsc, x^{p_n}y^{q_n} \rangle$ is a maximal
  element of $\mathcal{P}_h$, so $T_+(M) = \varnothing$.  Since $|h| <
  \infty$, $M$ has finite colength and $p_n = 0 = q_0$.  If $|h| = 0$
  or $1$, then $\langle 1 \rangle$ or $\langle x,y \rangle$
  respectively is the unique monomial ideal in $\mathcal{P}_h$, so the
  base case of the induction holds.

  For the induction step, we examine the ideal $(M \colon y)$.  The
  minimal generators of $(M \colon y)$ are either $\langle x^{p_0},
  x^{p_1}y^{q_1-1}, \dotsc, y^{q_n-1} \rangle$ when $q_1 > 1$ or
  $\langle x^{p_1}, x^{p_2}y^{q_2-1}, \dotsc, y^{q_n-1} \rangle$ when
  $q_1 = 1$.  As a preamble, we prove that $T_+(M \colon y) =
  \varnothing$.  If there exists a pair $(x^{p_i}y^{q_i -1},
  x^uy^{v-1})$ corresponding to a positive significant arrow of $(M
  \colon y)$, then $i > 0$, $u-p_i > 0$, and $x^{u+p_{i-1}-p_i}y^{v-1}
  \in (M \colon y)$.  The definition of the ideal quotient implies
  that $x^uy^v \not \in M$, and $x^{u+p_{i-1}-p_i}y^v \in M$.  Hence,
  $(i,u,v) \in T_+(M) = \varnothing$ which is a contradiction.
  Additionally, the short exact sequence
  \[
  0 \to \frac{S}{(M:y)}\bigl( -\deg(y) \bigr) \xrightarrow{\; y \;}
  \frac{S}{M} \to \frac{S}{(x^{p_0},y)} \to 0
  \]
  implies that $|h'| = \sum_{a \in A} h'(a) = |h| - p_0 < |h|$ where
  $h' \colon A \to \NN$ is the Hilbert function of $(M \colon y)$, so
  the induction hypothesis ensures that $(M \colon y)$ is unique.
  Therefore it suffices to show that all the maximal elements of
  $\mathcal{P}_h$ contain the same power of $x$ as a minimal
  generator.

  To complete the proof, we assume that $M$ is chosen so that the
  power of the minimal generator $x^{p_0}$ is maximal among all the
  maximal elements of $\mathcal{P}_h$.  We break our analysis into two
  cases.  First, suppose that there is no standard monomial $x^uy^v$
  of $M$ with degree equal to $\deg(x^{p_0 -1}) - \deg(y)$ such that
  $x^uy^{v+1} \in M$.  It follows that 
  \[
  \hslash:= h\bigl( \deg(x^{p_0 -1}) \bigr) - h \bigl( \deg(x^{p_0
    -1}) - \deg(y) \bigr)
  \] 
  is the number of standard monomials for $M$ of degree $\deg(x^{p_0
    -1})$ that are pure powers of $x$.  Moreover, $x^{p_0 -1}$ must be
  the $\hslash^{\text{th}}$ such monomial.  For any $M' \in
  \mathcal{P}_h$, there must be at least $\hslash$ standard monomials
  of degree $\deg(x^{p_0 -1})$ that are pure powers of $x$.  As a
  result, $x^{p_0 -1}$ is standard for all $M'$.  From our choice of
  $M$, we conclude that all the maximal elements of $\mathcal{P}_h$
  contain $x^{p_0}$ as a minimal generator in this case.

  For the second case, suppose that there is a standard monomial
  $x^uy^v$ of $M$ with degree equal to $\deg(x^{p_0 -1}) - \deg(y)$
  such that $x^uy^{v+1} \in M$.  Since there exists a minimal
  generator of $M$ dividing $x^uy^{v+1}$, there is an index $i > 0$
  such that $p_i \leq u < p_{i-1}$ and $q_i = v+1$.  If $u < p_0 -1$
  then we have $(i,p_0-1+p_i-u,0) \in T_{+}(M) = \varnothing$ which is
  a contradiction.  Hence, we may assume that $u = p_0 -1$ which
  implies that $(v+1)\deg(y) = \deg(y^{v+1}) = 0$.  Now, consider a
  hypothetical monomial $x^ry^s \in S$ satisfying $\deg(x^ry^s) =
  \deg(x^{p_0 -1})$ and $r < p_0 -1$.  Since the ideal $M$ has finite
  colength, there is a $\zeta \geq v+1$ such that $x^ry^{\zeta} \in M$
  and $x^r y^{\zeta-1} \not \in M$.  Thus, there is $0 \leq \xi \leq
  v$ with $\deg(x^ry^s) = \deg(x^ry^{\zeta-\xi})$, because
  $\deg(y^{v+1}) = 0$.  If $1 \leq j \leq n$ is the index such that
  $p_j \leq r < p_{j-1}$ and $\zeta = q_j$, then we have
  $\deg(x^{p_j}y^{q_j}) = \deg(x^{p_0-1-r+p_j}y^{\xi})$, so
  $(j,p_0-1-r+p_j,\xi) \in T_+(M) = \varnothing$ which is a
  contradiction.  In other words, the hypothetical monomial $x^ry^s$
  cannot exist.  Since $h\bigl( \deg(x^{p_0-1}) \bigr) > 0$, we deduce
  that $x^{p_0-1}$ must be a standard monomial for all $M' \in
  \mathcal{P}_h$.  From our choice of $M$, we again conclude that all
  the maximal elements of $\mathcal{P}_h$ contain $x^{p_0}$ as a
  minimal generator in this case.
\end{proof}

\begin{example}
  \label{e:nonlex}
  Suppose that $A = \ZZ/3\ZZ$, $\deg(x) = 1$ and $\deg(y) = 1$.  The
  monomial ideals in $S$ with Hilbert function $h(0) = 2$, $h(1) = 3$
  and $h(2) = 1$ are $M := \langle x^5,xy,y^2 \rangle$ and $M' :=
  \langle x^2, xy, y^5 \rangle$.  The poset $\mathcal{P}_h$ is the
  chain $M' \succ M$.  Since we have $x \lex y^7$, $\deg(x) =
  \deg(y^7)$, $y^7 \in M'$ and $x \not\in M'$, it follows that the
  lex-most ideal $M'$ is not a lex-segment ideal.  See
  \cite{MillerSturmfels}*{\S2.4} for more information on lex-segment
  ideals. \hfill $\diamond$
\end{example}

\begin{example}
  \label{e:nonExtremalBetti}
  Suppose that $A = 0$. The monomial ideals in $S$ with Hilbert
  function $h(0) = 3$ are $M := \langle x^3, y \rangle$, $M' :=
  \langle x^2, xy, y^2 \rangle$ and $M'' := \langle x, y^3 \rangle$.
  The poset $\mathcal{P}_h$ is the chain $M'' \succ M' \succ M$.  The
  monomial ideal $M'$ has the largest Betti numbers rather than the
  maximal element $M''$ of $\mathcal{P}_h$.  Thus the
  analogue of the Bigatti-Hulett Theorem
  \cite{MillerSturmfels}*{Theorem~2.24} is false. \hfill $\diamond$
\end{example}

We end this section with its central result.  A scheme is rationally
chain connected if two general points can be joined by a chain of
irreducible rational curves; see \cite{Kollar}*{\S IV.3}.

\begin{theorem}
  \label{t:connected}
  If $S = \kk[x,y]$ and the Hilbert function $h \colon A \to \NN$
  satisfies $|h| < \infty$, then the points on $\Hilb_S^h$
  corresponding to monomial ideals are connected by irreducible
  rational curves associated to positive significant arrows.
  Consequently, $\Hilb_S^h$ is rationally chain connected.
\end{theorem}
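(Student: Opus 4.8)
The plan is to show that the poset $\mathcal{P}_h$ of monomial ideals with Hilbert function $h$ is connected via the rational curves from Proposition~\ref{p:edgeideal}, and then to bootstrap from monomial points to arbitrary points. First I would argue by (downward) induction on the partial order $\succcurlyeq$, using the fact that $|\mathcal{P}_h|<\infty$. By Proposition~\ref{p:lexmost}, the lex-most ideal $L_h$ is the unique maximal element. Given any monomial ideal $M\in\mathcal{P}_h$ with $M\neq L_h$, it cannot be maximal, so $T_+(M)\neq\varnothing$ by the contrapositive of Proposition~\ref{p:edgeideal}; picking any $\alpha\in T_+(M)$ yields an edge ideal $I_\alpha(t)$ whose two Gr\"obner degenerations are $M=\initial_{\xel}(I_\alpha(t))$ and $M'=\initial_{\lex}(I_\alpha(t))$ with $M'\succ M$, and Lemma~\ref{l:curve} (invoked inside Proposition~\ref{p:edgeideal}) produces an irreducible rational curve on $\Hilb_S^h$ through the points $[M]$ and $[M']$. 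Since $M'$ is strictly larger in a finite poset, iterating this finitely many times connects $[M]$ to $[L_h]$ by a chain of such curves. Hence every monomial point is connected to $[L_h]$, and therefore any two monomial points are connected to each other.

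Next I would handle a general point $[I]\in\Hilb_S^h$, where $I$ need not be a monomial ideal. Here I apply Lemma~\ref{l:curve} directly to $I$: the Gr\"obner degenerations of $I$ with respect to $\lex$ and $\xel$ give an irreducible rational curve on $\Hilb_S^h$ passing through $[I]$, $[\initial_{\xel}(I)]$ and $[\initial_{\lex}(I)]$, and $\initial_{\xel}(I)$ is a monomial ideal lying in $\mathcal{P}_h$ (it has the same Hilbert function as $I$). So $[I]$ is joined by a single rational curve to a monomial point, which by the previous paragraph is joined to $[L_h]$. Concatenating, any two points of $\Hilb_S^h$ are joined by a chain of irreducible rational curves, which is exactly the statement that $\Hilb_S^h$ is rationally chain connected in the sense of \cite{Kollar}*{\S IV.3}. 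I should remark that $\Hilb_S^h$ is of finite type over $\kk$ by \cite{HaimanSturmfels}, so ``general points'' and ``all points'' coincide for the chain-connectedness assertion, and the notion is the usual one.

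I do not expect a genuine obstacle here: the substantive work has been front-loaded into Proposition~\ref{p:lexmost} (uniqueness of the maximal element) and Proposition~\ref{p:edgeideal} (the strict increase $M'\succ M$ and the existence of the connecting curve). The only point requiring a little care is the bookkeeping that a finite decreasing-codimension-free, strictly $\succ$-increasing chain in a finite poset must terminate at the unique maximum $L_h$; this is immediate since any maximal element reached has empty $T_+$ and hence equals $L_h$ by Proposition~\ref{p:lexmost}. One could phrase the induction as: if $M\neq L_h$ then $T_+(M)\neq\varnothing$, choose $\alpha\in T_+(M)$, pass to $M'=\initial_{\lex}(I_\alpha(t))\succ M$, and recurse; the process strictly ascends the finite poset $\mathcal{P}_h$ so it stops, necessarily at $L_h$. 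Thus the theorem follows by assembling Lemma~\ref{l:curve}, Proposition~\ref{p:edgeideal}, and Proposition~\ref{p:lexmost}.
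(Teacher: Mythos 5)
Your proposal is correct and follows essentially the same route as the paper: connect each monomial point to $[L_h]$ by iterating Proposition~\ref{p:edgeideal} up the finite poset $\mathcal{P}_h$ (terminating because $L_h$ is the unique maximal element by Proposition~\ref{p:lexmost}), and then attach an arbitrary point to a monomial point by applying Lemma~\ref{l:curve} to its Gr\"obner degenerations. The only slip is cosmetic: the implication $M \neq L_h \Rightarrow T_+(M) \neq \varnothing$ is not the contrapositive of Proposition~\ref{p:edgeideal} but rather the uniqueness assertion of Proposition~\ref{p:lexmost} (that $L_h$ is the only monomial ideal with $T_+ = \varnothing$), which you do invoke correctly in your final paragraph.
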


\begin{proof}
  Consider a point on $\Hilb_S^h$ corresponding to a monomial ideal
  $M$.  We first exhibit a finite chain of curves associated to
  positive significant arrows connecting the points on $\Hilb_S^h$
  corresponding to $M$ and $L_h$.  If $M \neq L_h$,
  Proposition~\ref{p:lexmost} implies that there exists $\alpha \in
  T_+(M)$.  If $M' := \initial_{\xel}\bigl( I_{\alpha}(t) \bigr)$,
  then Proposition~\ref{p:edgeideal} produces an irreducible rational
  curve associated to $\alpha$ which contains the points corresponding
  $M$ and $M'$.  Proposition~\ref{p:edgeideal} also shows that $M'
  \succ M$ in $\mathcal{P}_h$.  If $M' \neq L$, then we may repeat
  these steps.  Since Proposition~\ref{p:lexmost} shows that the
  lex-most ideal $L_h$ is the unique maximal element in
  $\mathcal{P}_h$, this process terminates with a curve that contains
  the point corresponding to $L_h$.  Thus, for every pair of points on
  $\Hilb_S^h$ corresponding to monomial ideals, there is a connected
  curve containing both points for which every irreducible
  component is a rational curve associated to a positive significant
  arrow.

  For each closed point on $\Hilb_S^h$, we produce an irreducible
  rational curve containing this point and a point corresponding to a
  monomial ideal.  If the ideal $I'$ corresponds to a point on
  $\Hilb_S^h$, then Lemma~\ref{l:curve} shows that the Gr\"{o}bner
  degenerations of $I'$ give an irreducible rational curve on
  $\Hilb_S^h$ which contains the points corresponding to $I'$ and
  $\initial_{\xel}(I')$.  Therefore, for every pair of closed points
  on $\Hilb_S^h$ there is a connected curve, in which every
  irreducible component is rational, that contains both points.
\end{proof}

\section{Tangent Spaces}
\label{s:tangents}

This section relates the combinatorics of the significant arrows to
the geometry of the multigraded Hilbert scheme.  Given a monomial
ideal $M$ in $S = \kk[x,y]$ with Hilbert function $h \colon A \to \NN$
satisfying $|h| < \infty$, fix $\alpha \in T_+(M)$ and let
$I_{\alpha}(t)$ be the $\alpha$-edge ideal defined in \eqref{eq:edge}.
We prove that for all $t \in \kk$ the significant arrows of $M$ index
a basis for the tangent space to $\Hilb_S^h$ at the point
corresponding $I_{\alpha}(t)$.  To accomplish this, we first identify
the tangent space to $\Hilb_S^h$ at the point corresponding to
$I_\alpha(t)$ with an explicit linear subspace.

\begin{proposition}
  \label{p:linear}
  Let $M = \langle x^{p_0}y^{q_0}, \dotsc, x^{q_n}y^{q_n} \rangle$ be
  a monomial ideal in $S$ with Hilbert function $h \colon A \to \NN$
  and let $\alpha = (k, \ell+p_k,m+q_k)$ be a positive significant
  arrow for $M$.  The tangent space to $\Hilb_S^h$ at the point
  corresponding to $I_{\alpha}(t)$ is isomorphic to the linear
  subspace of $\AA^r := \Spec \bigl( \kk[c_{u,v}^i : \text{$(i,u,v)$
    is an arrow of $M$}] \bigr)$ cut out by the homogeneous linear
  equations
  \begin{multline}
    \label{eq:linear}
    F(i,u,v) := \textstyle\sum\limits_{\mu = 0}^{b_{u,v}}
    t^\mu \bigl( c_{u-\mu \ell, v+q_{i-1}-q_i-\mu m}^{i-1}
    - c_{u-p_{i-1}+p_i-\mu \ell, v-\mu m}^i \\ - \delta_{i,k}
    tc_{u- p_{k-1}+p_\sigma-(\mu+1)\ell,
      v-q_k+q_\sigma-(\mu+1)m}^\sigma \bigr) \, ,
  \end{multline}
  where $1 \leq i \leq n$, $x^uy^v \not\in M$, $\sigma$ is the largest
  index satisfying $0 \leq \sigma < k$, $\ell + p_{k-1} \geq p_\sigma$
  and $m + q_k \geq q_\sigma$, and $b_{u,v}$ is the largest
  nonnegative integer satisfying $x^{u-\kappa\ell}y^{v-\kappa m} \in
  M$ for all $0 < \kappa < b_{u,v}$.
\end{proposition}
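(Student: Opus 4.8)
The plan is to use the standard description of the tangent space to the multigraded Hilbert scheme $\Hilb_S^h$ at a point corresponding to an ideal $I$ as the space of homomorphisms $\Hom_S(I, S/I)_0$ of degree zero, or equivalently, the space of first-order deformations. Concretely, I would work with the point corresponding to $I_\alpha(t)$ and its minimal Gr\"obner basis $g_0, \dots, g_n$ from Lemma~\ref{l:gb}, where $g_i = x^{p_i}y^{q_i}$ for $i < k$ and $g_i = x^{p_i}y^{q_i} - t x^{\ell+p_i}y^{m+q_i}$ for $i \geq k$. A tangent vector is a first-order perturbation $g_i \mapsto g_i + \varepsilon \, r_i$ where each $r_i$ is a $\kk$-linear combination of the standard monomials of $M$ of the same $A$-degree as $g_i$ (using that $M = \initial_{\xel}(I_\alpha(t))$, so the standard monomials of $M$ give a basis of $S/I_\alpha(t)$ in each degree), subject to the condition that the perturbed generators still generate an ideal with Hilbert function $h$. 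Writing $r_i = \sum_{(i,u,v)} c^i_{u,v} x^u y^v$ identifies the ambient space of perturbations with $\AA^r$, and the constraints come from lifting the syzygies.

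The key computational step is to work out exactly which constraints the syzygies impose. By Lemma~\ref{l:gb}, the syzygy module of $I_\alpha(t)$ is generated by the $n$ elements $\tau_i := y^{-q_{i-1}+q_i} \mathbf{e}_{i-1} - x^{p_{i-1}-p_i}\mathbf{e}_i - \delta_{i,k} t x^{\ell+p_{k-1}-p_\sigma}y^{m+q_k-q_\sigma}\mathbf{e}_\sigma$ for $1 \leq i \leq n$. For each such syzygy, the first-order condition is that $y^{-q_{i-1}+q_i} r_{i-1} - x^{p_{i-1}-p_i} r_i - \delta_{i,k} t x^{\ell+p_{k-1}-p_\sigma}y^{m+q_k-q_\sigma} r_\sigma$ must lie in $I_\alpha(t)$, i.e. must reduce to zero modulo the Gr\"obner basis with respect to $\xel$. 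Reducing this expression: each occurrence of a standard monomial remains, but monomials in $M$ that appear must be rewritten using the binomial relations $x^{p_j}y^{q_j} = t x^{\ell+p_j}y^{m+q_j}$ for $j \geq k$, and this rewriting is what produces the geometric series $\sum_{\mu=0}^{b_{u,v}} t^\mu(\cdots)$ in \eqref{eq:linear}: repeatedly substituting the binomial relation shifts the exponent vector by $(\ell, m)$ each time, and the process terminates after $b_{u,v}$ steps, precisely when the shifted monomial leaves $M$. The coefficient of each standard monomial $x^u y^v$ of $M$ (with $1 \le i \le n$) in the reduced form of the $i$-th lifted syzygy must vanish; collecting these coefficients gives exactly the linear forms $F(i,u,v)$ as stated, with the $\delta_{i,k}$ term accounting for the extra $\mathbf{e}_\sigma$ contribution to the $k$-th syzygy, and the extra factor of $t$ and the shift by one in the exponent of $t$ arising because that term already carries a factor of $t$ and a displacement of $(\ell,m)$ relative to the other two terms.

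The main obstacle I expect is bookkeeping: carefully tracking the exponent shifts under repeated application of the binomial rewriting rule, verifying that the index $b_{u,v}$ correctly records when the reduction terminates, and checking the boundary cases where $u - \kappa\ell$ or $v - \kappa m$ would become negative or where $i - 1 = 0$ or $\sigma$ coincides with some $i-1$. One must also be careful that the coefficients $c^j_{u,v}$ appearing in \eqref{eq:linear} with shifted indices are genuinely among the variables of $\AA^r$ — that is, that $(j, u - \mu\ell, v + \cdots)$ is indeed an arrow of $M$ whenever it appears with a nonzero coefficient, so the linear forms are well-defined; this should follow from the definition of $b_{u,v}$ together with the fact that $x^u y^v$ is a standard monomial. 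Finally, I would invoke the standard fact (as in \cite{HaimanSturmfels}, or \cite{MillerSturmfels}*{\S18.2}) that this space of syzygy-liftable perturbations is canonically the Zariski tangent space, so that the identification with the linear subvariety of $\AA^r$ cut out by the $F(i,u,v)$ is an isomorphism of $\kk$-vector spaces. The characteristic-free and torsion-free-group-free nature of the argument is automatic since everything is phrased in terms of monomials and the single parameter $t$.
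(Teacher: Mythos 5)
Your proposal is correct and follows essentially the same route as the paper: identify the tangent space with $\bigl(\Hom_S(I_\alpha(t),S/I_\alpha(t))\bigr)_0$ via \cite{HaimanSturmfels}*{Proposition~1.6}, parametrize maps by coefficients $c^i_{u,v}$ on standard monomials, impose the lifting condition against the syzygies of Lemma~\ref{l:gb}, and obtain the geometric-series form of \eqref{eq:linear} by taking normal forms modulo the $\xel$-Gr\"obner basis, with $b_{u,v}$ recording the monomials $x^{u-\mu\ell}y^{v-\mu m}$ that reduce to $x^uy^v$. The only point you flag as a worry (whether shifted index triples are genuine arrows) is handled in the paper simply by the convention that $c^i_{u,v}=0$ when $(i,u,v)$ is not an arrow.
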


\begin{remark}
  The tangent space to $\Hilb_S^h$ at the point corresponding to $M =
  I_\alpha(0)$ is cut out by $F(i,u,v) = c_{u, v+q_{i-1}-q_i}^{i-1} -
  c_{u-p_{i-1}+p_i, v}^i$ for $1 \leq i \leq n$ and $x^uy^v \not\in
  M$.  Lemma~\ref{l:gb} explains the importance of the index $\sigma$.
\end{remark}

\begin{proof}[Proof of Proposition~\ref{p:linear}]
  For simplicity, set $I := I_\alpha(t)$.  Lemma~\ref{l:gb} shows that
  the given generators of $I$ form a minimal Gr\"obner basis with
  respect to $\xel$, and Proposition~\ref{p:edgeideal} together with
  \cite{Eisenbud}*{Theorem~15.3} shows that the standard monomials of
  $M$ form a $\kk$-basis for $S/I$.  By
  \cite{HaimanSturmfels}*{Proposition~1.6}, the tangent space is
  isomorphic to $\bigl( \Hom_S(I,S/I) \bigr)_0$ where $0 \in A$.
  Given $\psi \in \bigl( \Hom_S(I,S/I) \bigr)_0$, the $i^{\text{th}}$
  generator of $I$ maps to $\sum_{u,v} c_{u,v}^{i} x^u y^v$ where
  $c_{u,v}^{i} \in \kk$ and the sum runs over all arrows for $M$ of
  the form $(i,u,v)$.  If $r_i$ is the number of such arrows and $r :=
  \sum_{i=0}^n r_i$, then each $\psi \in \bigl( \Hom_S(M,S/M)
  \bigr)_0$ produces a point $(c_{u,v}^i) \in \AA^r$.  Conversely, a
  point $(c_{u,v}^i) \in \AA^r$ defines $\varphi \in \bigl( \Hom_S(
  \bigoplus_{i=0}^n S(- \deg(x^{p_i}y^{q_i})),S/I) \bigr)_0$ by
  sending the $i^{\text{th}}$ standard basis element $\mathbf{e}_i$ of
  $\bigoplus_{i=0}^n S\bigl(- \deg(x^{p_i}y^{q_i}) \bigr)$ to
  $\sum_{u,v} c_{u,v}^i x^u y^v$; again the sum runs over all arrows
  $(i,u,v)$ of $M$.  The syzygies of $I$ determine whether $\varphi$
  restricts to $\bigl( \Hom_S(I,S/I) \bigr)_0$.  More precisely,
  Lemma~\ref{l:gb} provides a free presentation of $I$ having the form
  $S^{n} \xrightarrow{\partial} S^{n+1} \to I \to 0$.  From this, we
  obtain the exact sequence
  \[
  0 \to \Hom_S(I,S/I) \to \Hom_S(S^{n+1},S/I)
  \xrightarrow{\overline{\partial}} \Hom_S(S^{n},S/I) \, ,
  \] 
  so a point in $\AA^r$ defines an element $\bigl( \Hom_S(I,S/I)
  \bigr)_0$ if and only if $\overline{\partial}(\varphi) = \varphi
  \circ \partial = 0$.  This condition is equivalent to a system of
  homogeneous linear equations in the $c_{u,v}^i$. 
    
  We next describe this system of equations.  Since Lemma~\ref{l:gb}
  provides a generating set of syzygies for $I$, we see that $\varphi
  \in \bigl( \Hom_S(I,S/I) \bigr)_0$ if and only if we have
  \begin{multline*}
    0 = y^{-q_{i-1}+q_i} \left( \textstyle\sum_{u,v} c_{u,v}^{i-1} x^u
      y^v \right) - x^{p_{i-1} - p_i} \left( \textstyle\sum_{u,v}
      c_{u,v}^{i} x^u y^v \right) \\ - \delta_{i,k} t x^{\ell +
      p_{k-1} - p_\sigma}y^{m+q_k-q_\sigma} \left(
      \textstyle\sum_{u,v} c_{u,v}^{\sigma} x^u y^v \right) \in S/I\,
      ,
  \end{multline*}
  where the sums run over all $x^u y^v \not\in M$ and $c_{u,v}^{i} =
  0$ when the triple $(i,u,v)$ fails be an arrow.  Lemma~\ref{l:gb}
  also shows that the defining generators for $I$ form a Gr\"obner
  basis such that $\initial_{\xel}(I) = M$.  By taking the normal form
  with respect to the generators of $I$, these equations produce an
  equation for each triple $(i,u,v)$ such that $1 \leq i \leq n$ and
  $x^uy^v \not\in M$.  To be more explicit, observe that the coset in
  $S/I$ containing $x^uy^v \not\in M$ can have more than one element
  only when $x^uy^v \in \langle x^{\ell+p_j}y^{m+q_j} : j \geq k
  \rangle$.  Let $b_{u,v}$ be the largest nonnegative integer such
  that $x^{u-\kappa \ell}y^{v-\kappa m} \in M$ for $0 < \kappa \leq
  b_{u,v}$.  Since $\alpha \in T_+(M)$, we
  have $\ell > 0$, so $u-j\ell<0$ for $j \gg 0$, and thus $b_{u,v} <
  \infty$.  With this notation, the set $\{ x^{u-\mu \ell}y^{v- \mu m}
  : 0 \leq \mu \leq b_{u,v} \}$ consists of all the monomials in $S$
  which reduce to $x^uy^v$ modulo the generators of $I$.  Hence, the
  equation labelled by $(i,u,v)$ is
  \begin{multline*}
    F(i,u,v) := \textstyle\sum\limits_{\mu =0}^{b_{u,v}} t^\mu
    \bigl( c_{u- \mu \ell,v+q_{i-1}-q_i- \mu m}^{i-1} -
    c_{u-p_{i-1}+p_i- \mu \ell, v- \mu m}^{i}  \\
    - \delta_{i,k} tc_{u- p_{k-1} + p_\sigma - (\mu +1)\ell,
      v-q_k+q_\sigma-(\mu +1)m}^{\sigma} \bigr)\, .
  \end{multline*}
  It follows that the tangent space is isomorphic to the linear
  subvariety of $\AA^r$ cut out by these homogeneous linear equations.
\end{proof}

\begin{example}
  \label{e:arrowsL}
  If $A = 0$, $M = \langle x^4, x^2y, y^2 \rangle$, and $\alpha =
  (1,3,0) \in T_+(M)$ as in Example~\ref{e:arrowsSyz}, then the
  subspace in Proposition~\ref{p:linear} is cut out by:
  \begin{xalignat*}{4}
    F(1,0,0) &= 0 & F(1,1,0) &= -tc_{0,0}^{0} &
    F(1,2,0) &= -c_{0,0}^{1}-tc_{1,0}^{0} && \\
    F(1,3,0) &= -c_{1,0}^{1}-tc_{0,1}^{1} &
    F(1,0,1) &= c_{0,0}^{0} & F(1,1,1) &= c_{1,0}^{0} && \\
    F(2,0,0) &= 0  & F(2,1,0) &= 0 &
    F(2,2,0) &= - c_{0,0}^{2} && \\ F(2,3,0) &=
    tc_{2,0}^{1}+t^2c_{1,1}^{1}-c_{1,0}^{2} - tc_{0,1}^{2} &
    F(2,0,1) &= c_{0,0}^{1} & F(2,1,1) &=
    c_{1,0}^{1}+tc_{0,1}^{1} \, . && \quad\diamond
  \end{xalignat*}
\end{example}

\begin{example}
  \label{e:L}
  As in Example~\ref{e:syz}, suppose that $A = 0$, $M = \langle x^7,
  x^6y, x^5y^2, x^4y^3, x^2y^4, y^6 \rangle$, and $\alpha = (4,3,2)
  \in T_+(M)$.  Since $M$ has six generators and colength $26$, the
  linear subvariety in Proposition~\ref{p:linear} is defined by
  $(6-1)(26) = 130$ equations.  The following six equations illustrate
  some of the possibilities.
  \begin{xalignat*}{4}
    F(1,5,0) &= - c_{4,0}^{1} & 
    F(3,3,2) &= c_{3,1}^{2}+tc_{2,3}^{2}+t^2c_{1,5}^{2} -
    c_{2,2}^{3}-tc_{1,4}^{3} &
    F(2,4,1) &= c_{4,0}^{1}-c_{3,1}^{2} &&\\
    F(5,0,5) &= c_{0,3}^{4} &
    F(4,2,3) &= c_{2,2}^{3} - c_{0,3}^{4} -tc_{2,3}^{2} &
    F(4,1,5) &= c_{1,4}^{3}-tc_{1,5}^{2} && \!\!\diamond
  \end{xalignat*}
\end{example}

We next describe the linear relations among the equations $F(i,u,v)$.
By convention, we set $F(j,r,s) = 0$ if $r<0$, $s<0$ or $x^ry^s \in M$.

\begin{lemma}
  \label{l:relation}
  If $x^uy^v \not \in M$ with $u < p_{i-1}$ and $v < q_i - q_{i-1}$,
  then we have the relation
  \begin{multline}
    \label{eq:relation}
    0 = \sum_{j=i}^{\sigma} F(j,u - p_{i-1} + p_{j-1},v
    - q_i + q_j) \\
    + \sum_{\lambda \geq 0} \sum_{j = \sigma+1}^n t^{\lambda} F(j,u -
    p_{i-1}+p_{j-1} - \lambda \ell, v - q_i + q_j - \lambda m) \, .
  \end{multline}
\end{lemma}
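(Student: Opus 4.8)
The plan is to verify \eqref{eq:relation} by the bookkeeping that underlies Proposition~\ref{p:linear}. Recall from its proof that $F(i,u,v)$ is the coefficient of the standard monomial $x^u y^v$ in the $\xel$-normal form of $\varphi(s_i)$, where $s_i := y^{q_i-q_{i-1}}\mathbf{e}_{i-1} - x^{p_{i-1}-p_i}\mathbf{e}_i - \delta_{i,k}tx^{\ell+p_{k-1}-p_\sigma}y^{m+q_k-q_\sigma}\mathbf{e}_\sigma$ is the $i$th generating syzygy of $I_\alpha(t)$ furnished by Lemma~\ref{l:gb} and $\varphi$ is the generic homomorphism $\mathbf{e}_l\mapsto\sum_{(l,a,b)}c^l_{a,b}x^a y^b$. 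Thus \eqref{eq:relation} asserts that a certain $S[t]$-combination $\tau := \sum_j g_j(t)\,s_j$ of the generating syzygies has $\varphi(\tau)$ whose $\xel$-normal form has vanishing $x^u y^v$-coefficient; the multipliers $g_j(t)$ are exactly those for which the reductions below reproduce the shifted arguments appearing in \eqref{eq:relation}.

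First I would identify $\tau$. At $t=0$ the generating syzygies $s_j=y^{q_j-q_{j-1}}\mathbf{e}_{j-1}-x^{p_{j-1}-p_j}\mathbf{e}_j$ telescope to a ``long syzygy'' $\sum_{j=i}^{n}x^{p_{i-1}-p_{j-1}}y^{q_n-q_j}s_j = y^{q_n-q_{i-1}}\mathbf{e}_{i-1}-x^{p_{i-1}-p_n}\mathbf{e}_n$ running from $\mathbf{e}_{i-1}$ to $\mathbf{e}_n$. For general $t$ the only obstruction to the same telescope is the extra $\mathbf{e}_\sigma$-summand in the single exceptional syzygy $s_k$: each time the telescope passes index $k$ it emits a $t$-weighted multiple of $\mathbf{e}_\sigma$, which must be re-telescoped starting from $s_{\sigma+1}$ and, on its next pass through $s_k$, is re-emitted weighted by a further $t$ and shifted by $(\ell,m)$ in the exponents. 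Summing this geometric iteration --- finite because $\alpha\in T_+(M)$ forces $\ell>0$, so the $x$-exponents eventually become negative --- produces $\tau$, and explains both the split of \eqref{eq:relation} at $\sigma$ and the single sum $\sum_{\lambda\ge0}t^\lambda$ in its second half.

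Next I would expand $\varphi(\tau)$ and reduce. Each monomial appearing is either standard or, via repeated application of the binomial Gr\"obner relations $x^{p_j}y^{q_j}\mapsto tx^{\ell+p_j}y^{m+q_j}$ for $j\ge k$, reduces to a standard monomial along a $t$-weighted shift in the $(\ell,m)$-direction whose length is controlled by the integers $b_{a,b}$ from Proposition~\ref{p:linear}; collecting the coefficients of $x^u y^v$ produced this way gives exactly the right-hand side of \eqref{eq:relation}. Grouping by the free generator $\mathbf{e}_l$ involved, every surviving contribution carries a single $c^l_{a,b}$, and since the $c$'s are independent indeterminates the identity reduces to showing that the $\mathbf{e}_{i-1}$-, $\mathbf{e}_n$- and (residual) $\mathbf{e}_\sigma$-contributions each vanish. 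Here the hypotheses enter: $v<q_i-q_{i-1}$ forces the relevant $y$-exponent $v-q_n+q_{i-1}$ (and its $(\ell,m)$-translates, which cannot re-enter $M$) to be negative or to land in $M$, so the corresponding $c^{i-1}$ is zero; symmetrically $u<p_{i-1}$ together with $p_n=0$ --- we are in the finite-colength setting of \S\ref{s:connected} --- forces the $\mathbf{e}_n$-contributions to have negative $x$-exponent, and the $\mathbf{e}_\sigma$-contributions are eliminated by the same device, the degrees being consistent because $\deg(x^\ell y^m)=0$.

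The hard part will be reconciling the telescope with the normal-form reduction. The reduction lengths $b_{a,b}$ vary with the monomial, so the powers of $t$ emitted along consecutive terms of the telescope are not aligned termwise; moreover the convention $F(j,r,s)=0$ for $x^r y^s\in M$ breaks the naive telescope $\sum_j(d_{j-1}-d_j)$ at indices where the indexing monomial lies in $M$, and one must check that the $\sum_\lambda$-shifts in \eqref{eq:relation} are precisely what ``jump over'' those monomials to the next standard one while preserving the cancellation, and that the off-by-one substitution $\mu\mapsto\mu+1$ in the $\delta_{i,k}t\,c^\sigma$ term of \eqref{eq:linear} is what closes the loop at index $k$. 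Once $\tau$ and this shift-matching are pinned down, I expect what remains to be a finite, if delicate, reindexing of exponents that uses nothing beyond Lemma~\ref{l:gb} and the staircase structure of $M$.
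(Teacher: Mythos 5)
Your plan is, at bottom, the same telescoping cancellation that the paper performs, but repackaged: you want to realize the left-hand side of \eqref{eq:relation} as the coefficient of a single standard monomial in the normal form of $\varphi(\tau)$ for a ``long'' combination $\tau=\sum_j g_j(t)s_j$ of the generating syzygies of Lemma~\ref{l:gb}, and then kill the surviving $\mathbf{e}_{i-1}$-, $\mathbf{e}_n$- and residual $\mathbf{e}_\sigma$-contributions by exponent reasons. The end-game you sketch (the $c^{i-1}$ index acquires a negative $y$-exponent because $v<q_i-q_{i-1}$, the $c^n$ index a negative $x$-exponent because $u<p_{i-1}$, $p_n=0$, $\ell>0$, and the $c^\sigma$ remnant is absorbed by the geometric $t$-iteration at $k$) is exactly the paper's endgame. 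The genuine gap is that the step carrying all the content is the one you explicitly defer: the identification of the right-hand side of \eqref{eq:relation} with a normal-form coefficient of $\varphi(\tau)$. Coefficient extraction does not commute with multiplication by your monomial multipliers $g_j$: after multiplying $\operatorname{NF}(\varphi(s_j))$ by $g_j$ one must re-reduce, and each re-reduction emits powers of $t$ along $(\ell,m)$ whose validity depends on the staircase (intermediate monomials must stay in the binomial region and avoid the pure monomial generators $x^{p_j}y^{q_j}$ with $j<k$, where the normal form collapses to $0$). Moreover the natural monomial to extract, e.g.\ $x^u y^{v-q_i+q_n}$ after clearing the $y$-shifts, may itself lie in $M$, in which case its normal-form coefficient is vacuously zero and proving that it equals the displayed combination is the whole lemma. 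So the vanishing cannot be had for free (note also that $\tau\neq 0$ in the free module: for a finite-colength ideal in two variables the syzygy module is free on the $s_j$, so there are no second syzygies), and ``I expect what remains to be a finite, if delicate, reindexing'' is precisely the part that must be written out.

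Concretely, what is missing is the verification that in the combination \eqref{eq:relation} every variable $c^j_{r,s}$ with $\sigma<j<n$ that occurs at all occurs exactly twice, once with coefficient $t^\nu$ (via $F(j+1,\cdot,\cdot)$) and once with $-t^\nu$ (via $F(j,\cdot,\cdot)$); this requires checking that \emph{both} occurrences are genuinely present in the double sum, i.e.\ that the two monomials $x^{r}y^{s-q_j+q_{j+1}}$ and $x^{r+p_{j-1}-p_j}y^{s}$ really reduce to standard monomials and that the resulting shifts $\lambda=\nu-\mu$ and $\lambda'=\nu-\mu'$ are nonnegative. It is exactly here that the hypotheses $u<p_{i-1}$ and $v<q_i-q_{i-1}$ enter (they force $x^{u-p_{i-1}+p_j}y^{v-q_i+q_{j+1}}\notin M$ and $x^{u-p_{i-1}+p_{j-1}}y^{v-q_i+q_j}\notin M$), and your proposal never uses them for this purpose. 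One also needs the separate bookkeeping at $j=\sigma$ (three potential occurrences, of which only two lie in the double sum, with the $\nu=0$ case surviving with coefficient $+1$ to cancel the telescoped $-c^\sigma_{u-p_{i-1}+p_\sigma,\,v-q_i+q_\sigma}$ coming from the sum over $j\le\sigma$) and at $j=n$. Until this case analysis is carried out, the proposal is a plausible outline of the paper's argument rather than a proof; if you do carry it out, you will find you have reproduced the paper's term-by-term cancellation, with the syzygy-telescope serving only as motivation for which $F$'s to combine.
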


\begin{proof}
  We first consider the summands with $j \leq \sigma$.  Since we have
  the inequalities $v < q_i$ and $v-q_i+q_j < q_j \leq q_\sigma \leq
  m+q_k$, the monomials $x^{u-p_{i-1}+p_{j-1}}y^{v-q_i+q_{j-1}}$ and
  $x^{u-p_{i-1}+p_j}y^{v-q_i+q_j}$ do not belong to $\langle
  x^{\ell+p_j}y^{m+q_j} : j \geq k \rangle$.  Hence, we have
  \[
  F(j,u-p_{i-1}+p_{j-1},v-q_i+q_j) =
  c_{u-p_{i-1}+p_{j-1},v-q_i+q_{j-1}}^{j-1} -
  c_{u-p_{i-1}+p_j,v-q_i+q_j}^j
  \]
  so the first part of the relation \eqref{eq:relation} telescopes to
  \[
  \sum_{j=i}^{\sigma} F(j,u - p_{i-1} + p_{j-1},v - q_i + q_j) = -
  c_{u-p_{i-1}+p_\sigma,v-q_i+q_\sigma}^\sigma \, ,
  \]
  because $c_{u,v-q_i+q_{i-1}}^{i-1} = 0$.  Thus, it remains to
  analyze the variables $c_{r,s}^j$ in the double sum
  \begin{equation}
    \label{eq:double}
    \sum_{\lambda \geq 0} \sum_{j = \sigma+1}^n t^{\lambda} F(j,u -
    p_{i-1}+p_{j-1} - \lambda \ell, v - q_i + q_j - \lambda m) \, .
  \end{equation}

  To begin, we consider $j = n$.  The only equations that might
  contain $c_{r,s}^{n}$ have the form
  $F(n,u-p_{i-1}+p_{n-1}-\lambda\ell, v-q_i+q_n-\lambda m)$; in this
  equation, such variables have the form
  $c_{u-p_{i-1}-(\mu+\lambda)\ell, v-q_i+q_n-(\mu+\lambda)m}^n$ for
  some $\mu \geq 0$ since $p_n=0$. Since $u < p_{i-1}$ and $\ell > 0$,
  it follows that $u-p_{i-1}-(\mu+\lambda)\ell < 0$.  Hence, no
  variable of the form $c_{r,s}^n$ appears in the double sum.

  Next, suppose that $\sigma < j < n$.  We first show that $c_{r,s}^j$
  appears in at most two equations of the form \eqref{eq:linear}.
  Specifically, if $x^ry^{s-q_j+q_{j+1}}$ reduces modulo
  $I_{\alpha}(t)$ to the standard monomial $x^{r+\mu
    \ell}y^{s-q_j+q_{j+1}+ \mu m}$ for some $\mu \in \NN$, then the
  variable $c_{r,s}^j$ appears in the equation $F(j+1,r+\mu \ell,
  s-q_j+q_{j+1}+\mu m)$ with coefficient $t^\mu$.  Otherwise
  $x^ry^{s-q_j+q_{j+1}}$ reduces to zero modulo $I_{\alpha}(t)$ and
  the variable $c_{r,s}^j$ does not appear in an equation of the form
  $F(j+1,r',s')$ for any $r', s' \in \NN$.  Similarly, if the monomial
  $x^{r+p_{j-1}-p_j}y^s$ reduces modulo $I_{\alpha}(t)$ to the
  standard monomial $x^{r+p_{j-1}-p_j+\mu' \ell}y^{s+ \mu' m}$ for
  some $\mu' \in \NN$, then $c_{r,s}^j$ appears in $F(j,r+p_{j-1}-p_j
  + \mu' \ell, s + \mu' m)$ with coefficient $-t^{\mu'}$.  Otherwise
  $x^{r+p_{j-1}-p_j}y^{s}$ reduces to zero modulo $I_{\alpha}(t)$ and
  the variable $c_{r,s}^j$ does not appear in an equation of the form
  $F(j,r',s')$ for any $r', s' \in \NN$.  In summary, the variable
  $c_{r,s}^j$ appears in at most two equations of the form
  \eqref{eq:linear} and when it appears the coefficient is uniquely
  determined.

  To complete this case, we show that if the variable $c_{r,s}^j$
  appears in the double sum \eqref{eq:double} then it appears twice:
  once with coefficient $t^{\nu}$ and once with coefficient
  $-t^{\nu}$.  The equation $F(j+1,r+\mu \ell, s-q_j+q_{j+1}+\mu m)$
  occurs in the double sum if and only if
  \begin{align*}
    \begin{bmatrix}
      r+\mu \ell \\ s-q_j+q_{j+1} + \mu m
    \end{bmatrix}
    &= 
    \begin{bmatrix}
      u - p_{i-1} +p_j - \lambda \ell \\ v - q_i + q_{j+1} - \lambda m
    \end{bmatrix} 
    \quad \text{for some $\lambda \in \NN$.}
  \end{align*}
  Similarly, $F(j,r+p_{j-1}-p_j + \mu' \ell, s + \mu' m)$ occurs if
  and only if
  \begin{align*}
    \begin{bmatrix}
      r+p_{j-1}-p_j + \mu' \ell \\  s+\mu' m 
    \end{bmatrix}
    &= 
    \begin{bmatrix}
      u - p_{i-1} +p_{j-1} - \lambda' \ell \\ v - q_i + q_j - \lambda' m
    \end{bmatrix} 
    \quad \text{for some $\lambda' \in \NN$.}
  \end{align*}
  Rearranging these equations, it follows that $c_{r,s}^j$ appears in
  double sum only if
  \begin{equation}
    \label{eq:key} 
    \begin{bmatrix} 
      r \\ s 
    \end{bmatrix}
    =
    \begin{bmatrix}
      u - p_{i-1} + p_j\\ v - q_i +q_j
    \end{bmatrix}
    -
    \nu 
    \begin{bmatrix}
      \ell \\ m 
    \end{bmatrix}
    \quad \text{for some $\nu \in \NN$;}
  \end{equation}
  either $\nu := \mu + \lambda$ and the coefficient of $c_{r,s}^j$ is
  $t^\nu$ or $\nu := \mu' + \lambda'$ and the coefficient of
  $c_{r,s}^j$ is $- t^\nu$.  On the other hand, if \eqref{eq:key} holds
  for some $\nu \in \NN$, then we have
  \begin{align*}
    \begin{bmatrix}
      r \\ s-q_j+q_{j+1}
    \end{bmatrix}
    +
    \nu 
    \begin{bmatrix}
      \ell \\ m 
    \end{bmatrix}
    &= 
    \begin{bmatrix}
      u-p_{i-1}+p_j \\ v-q_i+q_{j+1}
    \end{bmatrix} \quad \text{and} \\
    \begin{bmatrix}
      r+p_{j-1}-p_j \\ s
    \end{bmatrix}
    +
    \nu 
    \begin{bmatrix}
      \ell \\ m 
    \end{bmatrix}
    &= 
    \begin{bmatrix}
      u-p_{i-1}+p_{j-1} \\ v-q_i+q_j
    \end{bmatrix}  \, .
  \end{align*}
  Since $u < p_{i-1}$ and $v < q_i$, the monomials
  $x^{u-p_{i-1}+p_j}y^{v-q_i+q_{j+1}}$ and
  $x^{u-p_{i-1}+p_{j-1}}y^{v-q_i+q_j}$ do not belong to $M$, so both
  of the monomials $x^ry^{s-q_j+q_{j+1}}$ and $x^{r+p_{j-1}-p_j}y^s$
  reduce modulo $I_\alpha(t)$ to standard monomials of $M$.  Hence,
  the variable $c_{r,s}^j$ appears in twice in \eqref{eq:double} with
  $\lambda := \nu - \mu \geq 0$ and $\lambda' := \nu - \mu' \geq 0$.
  We conclude that, when $c_{r,s}^j$ appears in the double sum, it
  appears twice with the same coefficient in $t$ but with opposite
  signs.

  Lastly, assume that $j = \sigma$.  In this case, the variable
  $c_{r,s}^\sigma$ appears in at most three of the equations of the
  form \eqref{eq:linear}; it could appear in $F(\sigma+1,
  u-p_{i-1}+p_{\sigma}-\mu \ell, v-q_{i}+q_{\sigma+1} -\mu m)$ with
  coefficient $t^\mu$ for some $\mu \geq 0$, in $F(\sigma,
  u-p_{i-1}+p_{\sigma-1}-\mu' \ell, v-q_{i}+q_{\sigma} -\mu' m)$ with
  coefficient $-t^{\mu'}$ for some $\mu' \geq 0$, and in
  \[
  F \bigl(k,u-p_{i-1}+ p_{k-1}-p_{\sigma}-(\mu''+1)\ell, v-q_i+q_{k} -
  (\mu''+1) m \bigr)
  \] 
  with coefficient $-t^{\mu''+1}$ for some $\mu'' \geq 0$.  As in the
  previous case, $c_{r,s}^{\sigma}$ appears in \eqref{eq:double} if
  and only if \eqref{eq:key} holds for some $\nu \geq 0$.  However,
  only the first and third equation appear in the double sum, because
  the inner sum of \eqref{eq:double} starts at $j = \sigma+1$.  As a
  consequence, if \eqref{eq:key} holds with $\nu > 0$, then $c_{r,s}^j$
  appears in \eqref{eq:double} precisely twice with the same exponent
  on $t$ but with opposite signs.  Moreover, if \eqref{eq:key} holds
  with $\nu = 0$, then $c^j_{r,s}$ appears in \eqref{eq:double} only
  in the equation $F(\sigma+1, u-p_{i-1}+p_\sigma,
  v-q_i+q_{\sigma+1})$ with coefficient one.  In summary, we have
  established that
  \[
  c_{u-p_{i-1}+p_\sigma,v-q_i+q_\sigma}^\sigma = \sum_{\lambda \geq 0}
  \sum_{j = \sigma+1}^n t^{\lambda} F(j,u - p_{i-1}+p_{j-1} - \lambda
  \ell, v - q_i + q_j - \lambda m)
  \]
  as required.
\end{proof}

Using Lemma~\ref{l:relation}, we can describe the tangent space to
$\Hilb_S^h$ at the point corresponding to $I_{\alpha}(t)$ by a smaller
system of linear equations.

\begin{corollary} 
  \label{c:mingensJ} 
  If $M$ is a monomial ideal in $S = \kk[x,y]$ with Hilbert function
  $h \colon A \to \NN$ and $\alpha \in T_+(M)$, then the tangent space
  to $\Hilb_S^h$ at the point corresponding to $I_{\alpha}(t)$ is
  isomorphic to the subspace of $\AA^r$ cut out by
  \[
  \mathcal{G} := \left\{ F(i,u,v) : 
    \begin{array}[c]{p{220pt}}
      $(i,u,v)$ is an arrow for $M$ with $1 \leq i \leq n$
      and either $u \geq p_{i-1}$ or $v \geq q_i-q_{i-1}$
    \end{array}
  \right\} \, .
  \]  
\end{corollary}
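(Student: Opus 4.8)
The plan is to show that the sub-collection $\mathcal{G}$ already cuts out the same linear subspace of $\AA^r$ as the full system $\{F(i,u,v)\}$ of Proposition~\ref{p:linear}. One inclusion is immediate: since $\mathcal{G}$ consists of some of the equations $F(i,u,v)$, the common zero locus of $\mathcal{G}$ contains the tangent space. So the content is the reverse inclusion, and it suffices to prove that each defining equation $F(i,u,v)$ lies in the $\kk$-linear span of $\mathcal{G}$. Fix one. If its indexing data satisfies $u \geq p_{i-1}$ or $v \geq q_i - q_{i-1}$, then $F(i,u,v)$ already belongs to $\mathcal{G}$ and we are done. Otherwise $u < p_{i-1}$ and $v < q_i - q_{i-1}$, which are exactly the hypotheses of Lemma~\ref{l:relation}.

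\textbf{Reducing the remaining equations to $\mathcal{G}$.} The relation~\eqref{eq:relation} contains $F(i,u,v)$ as its $j = i$ term, with coefficient $1$, so transposing it writes $F(i,u,v)$ as a $\kk$-linear combination of terms $F\bigl(j,\, u - p_{i-1} + p_{j-1} - \lambda\ell,\, v - q_i + q_j - \lambda m\bigr)$ with $j > i$ and $\lambda \geq 0$. By the convention preceding Lemma~\ref{l:relation}, any such term vanishes unless its exponents are nonnegative and its monomial is standard for $M$. For a surviving term, $j - 1 \geq i$ gives $q_{j-1} \geq q_i$, and $\alpha \in T_+(M)$ gives $m < 0$, so the second exponent satisfies
\[
v - q_i + q_j - \lambda m \;=\; (q_j - q_{j-1}) + \bigl( v + (q_{j-1} - q_i) - \lambda m \bigr) \;\geq\; q_j - q_{j-1} \, ,
\]
because $v \geq 0$, $q_{j-1} - q_i \geq 0$, and $-\lambda m \geq 0$. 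Hence every surviving term meets the membership condition for $\mathcal{G}$, so $F(i,u,v)$ lies in the span of $\mathcal{G}$, and the two subspaces coincide.

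\textbf{The main obstacle.} The step I expect to be the real difficulty is making the index bookkeeping in the previous paragraph watertight. Concretely, one must verify that for every nontrivial equation $F(i,u,v)$ outside $\mathcal{G}$, transposing the $j = i$ term of~\eqref{eq:relation} genuinely leaves only summands with $j > i$; equivalently, that such equations occur only when the index $\sigma$ of Lemma~\ref{l:gb} satisfies $i \leq \sigma$ (or that the equations outside $\mathcal{G}$ with $i > \sigma$ are already identically zero, so that Lemma~\ref{l:relation} handles the rest). The case distinction between $i$ and $\sigma$ controls both the range of the double sum in~\eqref{eq:relation} and the validity of the telescoping argument in the proof of that lemma, so it cannot be finessed. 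One must also confirm that the convention on $F(j,r,s)$ correctly discards every term with a negative exponent or a monomial inside $M$, which in turn requires tracking precisely which monomials reduce to a given standard monomial modulo $I_\alpha(t)$ and hence which variables $c_{r,s}^j$ actually survive in each $F(j,u',v')$. Granting this combinatorial accounting, the reduction to $\mathcal{G}$ follows at once from the displayed inequality.
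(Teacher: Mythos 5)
Your overall strategy is the paper's: use Lemma~\ref{l:relation} to rewrite each $F(i,u,v)$ with $u < p_{i-1}$ and $v < q_i - q_{i-1}$ in terms of equations belonging to $\mathcal{G}$, and your inequality for the summands with $j > i$ is exactly the computation the paper makes. But the step you yourself flag as ``the main obstacle'' is a genuine gap, and neither of the escape routes you suggest is available. The case $i > \sigma$ does occur (for instance $\sigma < k \leq i$), and then $j = i$ lies in the range of the double sum in \eqref{eq:relation}, so after transposing the $j=i$, $\lambda = 0$ term you are left not only with summands having $j > i$ but also with terms $F(i,\, u - \lambda\ell,\, v - \lambda m)$ for $\lambda \geq 1$. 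For these your displayed inequality breaks down, since it needs $q_{j-1} \geq q_i$ and here $q_{j-1} = q_{i-1} < q_i$; such terms need not lie in $\mathcal{G}$, and the equations outside $\mathcal{G}$ with $i > \sigma$ are not identically zero either. The paper's own Example~\ref{e:arrowsRel} is a counterexample to both hopes: there $\sigma = 0 < i = 1$, the relation reads $0 = F(1,1,0) + tF(1,0,1)$, and $F(1,1,0) = -tc_{0,0}^{0}$ is a nonzero equation outside $\mathcal{G}$ whose relation involves a $j = i$, $\lambda = 1$ term rather than only $j > i$ terms.

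The missing ingredient is a short induction, which is how the paper closes the argument: induct on $q_i - q_{i-1} - v$. For a leftover term $F(i,\, u-\lambda\ell,\, v - \lambda m)$ with $\lambda \geq 1$, positivity of the arrow gives $m < 0$, so $v - \lambda m > v$ and hence $q_i - q_{i-1} - (v - \lambda m) < q_i - q_{i-1} - v$; either this quantity is already $\leq 0$, in which case the term is in $\mathcal{G}$ (or zero by the convention), or the induction hypothesis expresses it as a linear combination of elements of $\mathcal{G}$. With that induction added, your reduction goes through; without it, the argument as written does not cover all the equations outside $\mathcal{G}$.
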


\begin{proof}
  Since Proposition~\ref{p:linear} establishes that the tangent space
  is cut by all of the equations $F(i,u,v)$, it suffices to show that
  the $F(i,u,v)$ with $u < p_{i-1}$ and $v < q_i-q_{i-1}$ can be
  written as a linear combination of equations $F(i',u',v')$ not of
  this form.  We induct on $q_i-q_{i-1}-v$.  If $0 \geq
  q_i-q_{i-1}-v$, then the claim is vacuously true.  Otherwise,
  consider the expression for $F(i,u,v)$ given by
  Lemma~\ref{l:relation}.  For $j > i$, we have $v-q_i+q_j-\lambda m
  \geq q_j-q_{j-1}$, because $m < 0$ implies that $q_i-q_{j-1}+\lambda
  m \leq 0$.  Hence, the only terms in this expression that might not
  be in $\mathcal{G}$ have the form $F(i,u-\lambda \ell, v - \lambda
  m)$.  But these terms can be written as a linear combination of the
  elements of $\mathcal{G}$ by the induction hypothesis.
\end{proof}

\begin{example}
  \label{e:arrowsRel}
  If $A = 0$, $M = \langle x^4, x^2y, y^2 \rangle$, and $\alpha =
  (1,3,0) \in T_+(M)$ as in Example~\ref{e:arrowsL}, then
  Lemma~\ref{l:relation} applied to $i=1$ and $x, x^2, x^3 \not
  \in M$ shows $0 = F(1,1,0) + t F(1,0,1)$, $0 = F(1,2,0) + F(2,0,1) +
  t F(1,1,1)$, and $0 = F(1,3,0) + F(2,1,1)$.  \hfill $\diamond$
\end{example}

\begin{example}
  As in Example~\ref{e:L}, suppose that $A = 0$, $M = \langle x^7,
  x^6y, x^5y^2, x^4y^3, x^2y^4, y^6 \rangle$, and $\alpha = (4,3,2)
  \in T_+(M)$.  For $i=1$ and $x^5 \not \in M$, Lemma~\ref{l:relation}
  provides the relation
  \[
  0 = F(1,5,0) + F(2,4,1) + F(3,3,2) + F(4,2,3) + F(5,0,5) + t
  F(3,2,4) + t F(4,1,5) \, .
  \]
  Since $x^2y^4 \in M$, we have $F(3,2,4) = 0$ by convention. \hfill
  $\diamond$
\end{example}

The following theorem is the essential result in this section.  The
proof shows that the dimension of the appropriate linear subspace of
$\mathbb A^r$ equals the number of significant arrows.

\begin{theorem} 
  \label{t:tangentBasis}
  Let $M$ be a monomial ideal in $S = \kk[x,y]$ with Hilbert function
  $h \colon A \to \NN$ and fix $\alpha \in T_+(M)$.  The significant
  arrows $T(M)$ of $M$ index a basis for the tangent space to
  $\Hilb^h_S$ at the point corresponding to the edge ideal
  $I_\alpha(t)$ for all $t \in \kk$.
\end{theorem}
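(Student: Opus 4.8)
The plan is to read off the tangent dimension directly from the reduced linear system $\mathcal{G}$ of Corollary~\ref{c:mingensJ}. Every arrow of $M$ is either significant (an element of $T(M)$) or insignificant, and the coordinates of $\AA^r$ are indexed by all arrows of $M$, so it suffices to prove that the solution space of $\mathcal{G}$ has dimension exactly $|T(M)|$, with the coordinates $c^i_{u,v}$ indexed by significant arrows forming a system of free parameters. Equivalently, after putting $\mathcal{G}$ in echelon form with respect to a suitable total order $\prec$ on the variables, I want the pivot columns to be precisely the insignificant arrows.

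For the inequality $\dim \geq |T(M)|$ I would choose $\prec$ so that every insignificant arrow precedes every significant one, refined by level and by $x$-degree. The key point is local: in each equation $F(i,u,v)\in\mathcal{G}$ the terms with $\mu=0$ are $c^{i-1}_{u,\,v+q_{i-1}-q_i}-c^i_{u-p_{i-1}+p_i,\,v}$ with unit coefficients, and, using only $x^uy^v\notin M$ together with the defining condition ``$u\geq p_{i-1}$ or $v\geq q_i-q_{i-1}$'' of $\mathcal{G}$, one of these two slots is a genuine arrow that cannot be significant: since $x^uy^v\notin M$ the $(i-1)$-slot is never nonpositive significant and the $i$-slot is never nonnegative significant, and in the sub-case $u\geq p_{i-1}$ the $i$-slot is a nonnegative arrow while in the sub-case $u<p_{i-1}$ the $(i-1)$-slot fails to be nonnegative, so in either case it is insignificant. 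Arranging $\prec$ so that this insignificant slot also dominates the remaining $t^\mu$-terms and the $\delta_{i,k}$-term of $F(i,u,v)$ shows that every equation of $\mathcal{G}$ has a leading variable of unit coefficient indexed by an insignificant arrow; hence the pivot columns lie among the insignificant arrows and $\dim\geq|T(M)|$.

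For $\dim\leq|T(M)|$ I would exhibit, for each insignificant arrow $\beta=(j,r,s)$, an explicit equation $F_\beta\in\mathcal{G}$ having $c_\beta$ as its leading variable: if $\beta$ is nonnegative and $j\geq 1$, take $F_\beta:=F(j,\,r+p_{j-1}-p_j,\,s)$, while if $\beta$ is not nonnegative take $F_\beta:=F(j+1,\,r,\,s+q_{j+1}-q_j)$; in each case insignificance of $\beta$ is exactly what forces the monomial labelling $F_\beta$ to be standard, so $F_\beta$ genuinely lies in $\mathcal{G}$ and $c_\beta$ occurs in it with unit coefficient. Distinct $\beta$ then have distinct leading variables, so $\{F_\beta\}$ is triangular with respect to $\prec$ and therefore linearly independent: $\rank(\mathcal{G})\geq|\{\text{insignificant arrows}\}|$ and $\dim\leq|T(M)|$. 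Combining the two estimates, the solution space has dimension $|T(M)|$, the pivot columns are exactly the insignificant arrows, and the basis vectors obtained by setting a single significant coordinate to $1$, the rest to $0$, and back-solving for the insignificant coordinates furnish the desired basis indexed by $T(M)$; the equations of $\mathcal{G}$ not of the form $F_\beta$ are then consequences of the $F_\beta$, which is the linear-algebra content of the relations in Lemma~\ref{l:relation}.

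The hard part is the combinatorial bookkeeping hidden in the last two paragraphs: producing one total order $\prec$ that simultaneously makes the designated insignificant slot the leading term of every equation of $\mathcal{G}$ and makes the family $\{F_\beta\}$ triangular, and verifying that the recipe for $F_\beta$ covers every insignificant arrow exactly once. This requires a case analysis by the position of an arrow relative to the staircase of $M$ (nonnegative, nonpositive, or utterly insignificant, and level $0$ versus higher levels) and relative to the data $(k,\ell,m,\sigma)$ of $\alpha$; the utterly insignificant arrows and the terms carrying the $\delta_{i,k}$ factor occur only for nonpositive gradings and are the genuinely delicate part. Because the pivot coefficients are units not involving $t$, the argument is uniform in $t\in\kk$, which is exactly the constancy of the tangent dimension along the curve $I_\alpha(t)$ needed in the following section.
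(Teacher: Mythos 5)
Your proposal takes essentially the same route as the paper's proof: reduce to the system $\mathcal{G}$ of Corollary~\ref{c:mingensJ} and triangularize it with respect to a term order on the variables $c_{u,v}^{i}$ so that the leading variables are exactly the insignificant arrows; your designated $\mu=0$ pivot slots and your equations $F_\beta$ coincide with the ones used in the paper's case analysis. The bookkeeping you defer is exactly what the paper's order conditions (\textsf{C0})--(\textsf{C3}) accomplish --- order the variables primarily by the horizontal displacement $u-p_i$ (equivalently, give $t$ a negative weight), with tie-breaks on the index $i$ depending on the type of arrow --- which makes the designated insignificant slot dominate all the $t^{\mu}$- and $\delta_{i,k}$-terms simultaneously and yields the bijection between insignificant arrows and leading terms.
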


\begin{proof}
  Let $I := I_{\alpha}(t)$ and $\alpha = (k, \ell+p_k, m +q_k) \in
  T_+(M)$.  By Corollary~\ref{c:mingensJ}, we see that the tangent
  space to $\Hilb_S^h$ at the point corresponding to $I$ is isomorphic
  to the subspace of $\AA^r$ cut out by those $F(i,u,v)$ where $x^uy^v
  \not\in M$, $1 \leq i \leq n$, and either $u \geq p_{i-1}$ or $v
  \geq q_i-q_{i-1}$.  It suffices to show that the insignificant
  arrows are in bijection with the initial terms (or leading
  variables) in this system of equations.

  By definition, each $c_{u,v}^{i}$ corresponds to an arrow $(i,u,v)$
  associated to $M$.  For convenience, we say that the variable
  $c_{u,v}^{i}$ is significant, nonnegative, etc., whenever same
  adjective applies to the corresponding arrow.  Let $\order$ be a
  monomial order on the polynomial ring $\kk[c_{u,v}^{i} :
  \text{$(i,u,v)$ is an arrow of $M$}]$ satisfying the following
  conditions:
  \begin{align}
    \label{C0}
    \tag{\textsf{C0}} 
    & 
    \begin{array}[t]{p{410pt}} 
      if $r-p_j > u-p_i$ then $c^j_{r,s} \order c^i_{u,v}$;
    \end{array} \\
    \label{C1}
    \tag{\textsf{C1}} 
    &
    \begin{array}[t]{p{410pt}} 
      for two nonnegative variables $c_{u,v}^{i}$ and $c_{r,s}^{j}$
      with $r-p_j=u-p_i$, the inequality $i > j$ implies that
      $c_{u,v}^{i} \order c_{r,s}^{j}$;
    \end{array} \\
    \label{C2}
    \tag{\textsf{C2}} 
    &
    \begin{array}[t]{p{410pt}} 
      for two nonpositive variables $c_{u,v}^{i}$ and $c_{r,s}^{j}$
      with $r-p_j=u-p_i$, the inequality $i <j$ 
      implies that $c_{u,v}^{i} \order c_{r,s}^{j}$;
    \end{array} \\
    \label{C3}
    \tag{\textsf{C3}} 
    &
    \begin{array}[t]{p{410pt}} 
      for two utterly insignificant variables $c_{u,v}^{i}$ and
      $c_{r,s}^{j}$ with $r-p_j = u-p_i$, the inequality $i < j$
      implies that $c_{u,v}^{i} \order c_{r,s}^{j}$.
    \end{array}
  \end{align} 
  Observe that each equation $F(i,u,v)$ is homogeneous with respect
  to the grading defined by setting $\deg(c_{r,s}^j)$ equal to be the
  image of $\bigl[
    \begin{smallmatrix} 
      r-p_j \\ s-q_j 
    \end{smallmatrix} \bigr]$ in $\ZZ^2/ \ZZ
  \left[ 
    \begin{smallmatrix} 
      \ell \\ m 
    \end{smallmatrix} \right]$.  Hence, \eqref{C0} can be viewed as
  giving $t$ a negative weight.  Since $\ell > 0$, for each nonzero
  sum of the form $\sum\limits_{\mu \geq 0} t^\mu c_{u'-\mu \ell,
    v'-\mu m}^j$, condition~\eqref{C0} implies that $\initial_{\order}
  \Bigl( \textstyle\sum\limits_{\mu \geq 0} t^\mu c_{u'-\mu \ell,
    v'-\mu m}^j \Bigr) = t^{\widetilde{\mu}} c_{u'- \widetilde{\mu}
    \ell, v'- \widetilde{\mu} m}^j$, where $\widetilde{\mu} :=
  \min\{\mu : c_{u'- \mu \ell, v'-\mu m}^j \neq 0 \}$.  It follows
  that
  \begin{multline*}
    \initial_{\order} \bigl( F(i,u,v) \bigr) = \initial_{\order}
    \bigl( t^{\widetilde{\mu}'} c_{u- \widetilde{\mu}'
      \ell,v+q_{i-1}-q_i- \widetilde{\mu}' m}^{i-1} -
    t^{\widetilde{\mu}} c_{u-p_{i-1}+p_i- \widetilde{\mu} \ell,
      v- \widetilde{\mu} m}^{i} \\ - \delta_{i,k}
    t^{\widetilde{\mu}'' + 1} c_{u- p_{k-1} + p_\sigma -
      (\widetilde{\mu}'' +1)\ell,
      v-q_k+q_\sigma-(\widetilde{\mu}'' +1)m}^{\sigma} \bigr)
  \end{multline*}
  for appropriate $\widetilde{\mu}', \widetilde{\mu},
  \widetilde{\mu}'' \in \NN$; \eqref{C0} also guarantees that the
  initial term is the variable accompanying the smallest exponent of
  $t$.
  
  As a first step in constructing the bijection, we show that every
  insignificant arrow corresponds to the initial term of an element in
  $\mathcal{G}$.  We divide the analysis into three cases.

  \vspace{0.55em}
  \paragraph*{\textsc{nonnegative case}}  
  If $(i,u,v)$ is a nonnegative insignificant arrow, then
  $x^{u+p_{i-1}-p_i}y^v$ does not belong to $M$.  Since $u \geq p_i$,
  we have $u+p_{i-1}-p_i \geq p_{i-1}$, so the variable $c_{u,v}^{i}$
  appears as a nonzero term in $F(i,u+p_{i-1}-p_i,v) \in \mathcal{G}$
  (i.e.\  $\widetilde{\mu} = 0$).  Hence, \eqref{C0} and \eqref{C1}
  establish that $\initial_{\order} \bigl( F(i,u+p_{i-1}-p_i,v) \bigr)
  = c_{u,v}^{i}$.

  \vspace{0.55em}
  \paragraph*{\textsc{nonpositive case}}  
  If $(i,u,v)$ is a nonpositive insignificant arrow, then
  $x^uy^{v-q_i+q_{i+1}} \not\in M$.  Since $v\geq 0$, $v -q_i+q_{i+1}
  \geq -q_i+q_{i+1}$, the variable $c_{u,v}^{i}$ appears as a nonzero
  term in $F(i+1,u,v-q_i+q_{i+1}) \in \mathcal{G}$ (so
  $\widetilde{\mu}' = 0$).  Together \eqref{C0} and \eqref{C2}
  establish that $\initial_{\order} \bigl( F(i+1,u,v-q_i+q_{i+1})
  \bigr) = c_{u,v}^{i}$.

  \vspace{0.55em}
  \paragraph*{\textsc{utterly insignificant case}} 
  If $(i,u,v)$ is an utterly insignificant arrow, then we have $u <
  p_i \leq p_j$ for $j \leq i$ and $v < v-q_i+q_{i+1} < q_{i+1} \leq
  q_j$ for $j > i$, so $x^uy^{v-q_i+q_{i+1}} \not\in M$.  Hence, the
  variable $c_{u,v}^{i}$ appears as a nonzero term in
  $F(i+1,u,v-q_i+q_{i+1}) \in \mathcal{G}$.  Hence, \eqref{C0} and
  \eqref{C3} establish that $\initial_{\order} \bigl(
  F(i+1,u,v-q_i+q_{i+1}) \bigr) = c_{u,v}^{i}$.

  \vspace*{0.55em}
  \noindent
  By combining these three cases, we get an injective map from the
  insignificant arrows of $M$ to the elements of $\mathcal{G}$.

  To establish that this map is a bijection, we show that the initial
  term of each element of $\mathcal{G}$ corresponds to an
  insignificant arrow.  Again, there are three cases.  Fix $x^uy^v
  \not \in M$.

  \vspace{0.55em}
  \paragraph*{\textsc{nonnegative case}}    
  If $u \geq p_{i-1}$, then we have the inequalities $u >
  u-p_{i-1}+p_i \geq p_i$.  Hence, $(i,u-p_{i-1}+p_i,v)$ is an
  insignificant nonnegative arrow for $M$.  Moreover, \eqref{C0} and
  \eqref{C1} ensure that $\initial_{\order} \bigl( F(i,u,v) \bigr) =
  c_{u-p_{i-1}+p_i,v}^{i}$.

  \vspace{0.55em}
  \paragraph*{\textsc{nonpositive case}}  
  If $v \geq q_i$, then we have the inequalities $v > v+q_{i-1}-q_i
  \geq q_{i-1}$.  Hence, $(i-1,u,v+q_{i-1}-q_i)$ is an insignificant
  nonnegative arrow for $M$.  The inequality $u > u-p_{i-1}+p_i$
  together with \eqref{C0} and \eqref{C2} imply that
  $\initial_{\order} \bigl( F(i,u,v) \bigr) =
  c_{u,v+q_{i-1}-q_i}^{i-1}$.

  \vspace{0.55em}
  \paragraph*{\textsc{utterly insignificant case}} 
  If $u < p_{i-1}$ and $q_i - q_{i-1} \leq v < q_i$, then we have the
  inequalities $\min(v,q_{i-1}) > v+q_{i-1}-q_i \geq 0$.  Hence,
  $(i-1,u,v+q_{i-1}-q_i)$ is an utterly insignificant arrow for $M$.
  The inequality $u > u-p_{i-1}+p_i$ together with \eqref{C0} and
  \eqref{C3} imply that $\initial_{\order} \bigl( F(i,u,v) \bigr) =
  c_{u,v+q_{i-1}-q_i}^{i-1}$.

  \vspace*{0.55em}
  \noindent
  In each case, the initial term of $F(i,u,v)$ is an insignificant
  arrow.  Moreover, if we have $(i,u,v) \neq (j,r,s)$ with $F(i,u,v),
  F(j,r,s) \in \mathcal{G}$, then $F(i,u,v)$ and $F(j,r,s)$ have
  different initial terms.  Therefore, we have a bijection between the
  insignificant arrows of $M$ and the initial terms of the elements of
  $\mathcal{G}$.

  Since the initial terms for the elements of $\mathcal{G}$ are
  relatively prime, they form a Gr\"obner basis with respect to
  $\order$.  Therefore, $T(M)$ indexes a basis for the tangent space
  to $\Hilb_S^h$ at the point corresponding to $I_{\alpha}(t)$ for all
  $t \in \kk$.
\end{proof}

\begin{example}
  If $A = 0$, $M = \langle x^4, x^2y, y^2 \rangle$, and $\alpha =
  (1,3,0) \in T_+(M)$ as in Example~\ref{e:arrowsRel}, then
  Theorem~\ref{t:tangentBasis} shows that the tangent space to the
  appropriate multigraded Hilbert scheme at the point corresponding to
  $I_{\alpha}(t)$ is isomorphic to subspace cut out by
  \begin{align*}
    \langle \mathcal{G} \rangle &:= \langle F(1,0,1), F(1,1,1),
    F(2,2,0), F(2,3,0), F(2,0,1), F(2,1,1) \rangle \\
    &= \langle c_{0,0}^{0}, c_{1,0}^{0}, -c_{0,0}^{2},
    -c_{1,0}^{2}-tc_{0,1}^{2}+tc_{2,0}^{1}+t^2c_{1,1}^{1},
    c_{0,0}^{1}, c_{1,0}^{1} + t c_{0,1}^{1} \rangle \, .
  \end{align*}
  Hence, the tangent space has dimension $(3)(6) - 6 = 12$.\hfill
  $\diamond$
\end{example}

\section{Smoothness}
\label{s:smoothness}

The goal of this final section is to prove Theorem~\ref{t:main}.  To
begin, we show that $\Hilb_S^h$ has at least one nonsingular point.
This result parallels \cite{ReevesStillman}*{Theorem~1.4} and our
proof extends the techniques in \cite{Evain}*{Proposition~10}.

\begin{proposition} 
  \label{p:lexmostsmooth} 
  Let $S = \kk[x,y]$ and let $L_h$ be the lex-most ideal for a Hilbert
  function $h \colon A \to \NN$ satisfying $|h| < \infty$.  The ideal
  $L_h$ corresponds to a nonsingular point on $\Hilb_S^h$.
\end{proposition}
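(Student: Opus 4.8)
The plan is to show that the Zariski tangent space and the local ring of $\Hilb_S^h$ at the point corresponding to $L_h$ have the same dimension. First I would verify that $\dim_\kk T_{L_h}\Hilb_S^h = |T(L_h)| =: d$. The tangent space at the \emph{monomial} point $L_h$ is $\bigl(\Hom_S(L_h,S/L_h)\bigr)_0$, which is cut out in $\AA^r$ by the monomial-case equations $F(i,u,v)=c^{i-1}_{u,v+q_{i-1}-q_i}-c^i_{u-p_{i-1}+p_i,v}$ recorded after Proposition~\ref{p:linear}; the $t=0$ case of the telescoping relations from Lemma~\ref{l:relation} together with the combinatorial bijection in the proof of Theorem~\ref{t:tangentBasis} (both of which only simplify when $t=0$, and where the hypothesis $T_+(L_h)=\varnothing$ just records that every nonnegative significant arrow of $L_h$ is a boundary arrow) match the insignificant arrows of $L_h$ with the leading terms of these equations, so the tangent space has dimension $|T(L_h)|$. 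In particular $\dim_{L_h}\Hilb_S^h\le d$, and the proposition follows once I exhibit an irreducible $d$-dimensional subvariety of $\Hilb_S^h$ through the point of $L_h$.

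To build such a subvariety, I would follow \cite{Evain}*{Proposition~10} and construct an ideal $\mathcal{I}$ in $\kk[\mathbf{c}]\otimes_\kk S$, where $\mathbf{c}=(c^i_{u,v})$ is a vector of $d$ indeterminates indexed by the significant arrows $(i,u,v)\in T(L_h)$, with three properties: (a) $\mathcal{I}$ reduces to $L_h$ when $\mathbf{c}=0$; (b) $\mathcal{I}$ has a Gr\"obner basis with respect to $\lex$ whose leading terms are exactly the minimal generators of $L_h$, so that $\initial_{\lex}(\mathcal{I})=L_h\otimes\kk[\mathbf{c}]$ and the standard monomials of $L_h$ form a $\kk[\mathbf{c}]$-basis of each graded piece of $(\kk[\mathbf{c}]\otimes S)/\mathcal{I}$, making $\mathcal{I}$ a flat family of admissible ideals with Hilbert function $h$ and hence a morphism $\varphi\colon\AA^d\to\Hilb_S^h$ with $\varphi(0)$ the point of $L_h$; and (c) the coefficient $c^i_{u,v}$ is recoverable from the reduced Gr\"obner basis of every fibre, so $\varphi$ is injective on $K$-points for all $\kk$-algebras $K$. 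The generators of $\mathcal{I}$ are not the naive linear deformations $x^{p_i}y^{q_i}\mapsto x^{p_i}y^{q_i}+\sum_{(i,u,v)}c^i_{u,v}x^uy^v$; following Evain one must add correction terms of higher order in $\mathbf{c}$ so that the S-polynomials, which (as in Lemma~\ref{l:gb}) come from adjacent pairs of minimal generators of $L_h$, reduce to zero over $\kk[\mathbf{c}]$.

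With this family in hand the argument closes: since $\varphi$ is injective its generic fibre is zero-dimensional, so $\overline{\varphi(\AA^d)}$ is an irreducible closed subvariety of $\Hilb_S^h$ of dimension $d$ containing the point of $L_h$; hence $\dim_{L_h}\Hilb_S^h\ge d$, and combined with $\dim_{L_h}\Hilb_S^h\le\dim_\kk T_{L_h}\Hilb_S^h=d$ this gives equality, which is exactly the assertion that $L_h$ corresponds to a nonsingular point. As all the estimates are combinatorial, this works over an arbitrary field.

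The main obstacle is property (b): writing the generators of $\mathcal{I}$ down explicitly and verifying that they form a Gr\"obner basis over $\kk[\mathbf{c}]$, equivalently that $\initial_{\lex}(\mathcal{I})=L_h$. This is where the hypothesis that $L_h$ is the lex-most ideal is essential --- being the maximum of $\mathcal{P}_h$ is exactly what guarantees enough standard monomials of $L_h$ below each minimal generator to absorb the S-polynomial remainders, so that a flat deformation with $L_h$ as initial ideal exists at all. The bookkeeping is further complicated by nonpositive gradings: when $\dim_\kk S_0=\infty$ or $A$ has torsion there are boundary significant arrows to track in addition to the genuinely two-dimensional ones, but, as in the earlier sections, this does not force us outside the category of combinatorial arguments.
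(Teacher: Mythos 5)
Your overall strategy is the same as the paper's: compute the tangent space dimension $d=|T(L_h)|$ at the monomial point (your handling of the $t=0$ degeneration of Proposition~\ref{p:linear}, Lemma~\ref{l:relation} and Theorem~\ref{t:tangentBasis} is fine), and then exhibit a $d$-dimensional family through the point by building an ideal over $\kk[\mathbf{c}]$ with $\lex$-initial ideal $L_h$, parametrized by the significant arrows. But the proposal has a genuine gap exactly where you flag ``the main obstacle'': you never construct the family $\mathcal{I}$, you only assert that, following Evain, suitable higher-order correction terms in $\mathbf{c}$ can be added so that the S-polynomials reduce to zero. That assertion is the entire content of the step, and it is not automatic: for a general monomial ideal such a flat deformation by \emph{all} of its significant arrows need not exist, so one cannot wave at Evain and move on. The paper's proof consists precisely of an explicit recursive construction: for each $i$ it forms $g_i := y^{-q_{i-1}+q_i} + \sum_{(i,p_i,v)\in T(L_h)} c^i_{p_i,v}\,y^{v-q_{i-1}}$, sets $f_n:=\prod_i g_i$, and defines $f_i$ from $f_{i+1}$ by dividing by $g_{i+1}$ a combination involving the nonpositive arrows (via the index $\varepsilon(i,u,v)$), then checks by hand that the S-polynomials of adjacent generators reduce to zero, so that $\initial_{\lex}(I\otimes k(\mathfrak p))=L_h$ on every fibre and $I$ is admissible.

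Moreover, your heuristic for where lex-mostness enters (``enough standard monomials below each minimal generator to absorb the S-polynomial remainders'') is not the actual mechanism and would not guide you to the construction. The hypothesis $T_+(L_h)=\varnothing$ is used concretely: it forces every significant arrow to be either nonpositive or of the special form $(i,p_i,v)$ with $v\ge q_{i-1}$, and it is this dichotomy that makes the multiplicative, triangular construction possible --- the $(i,p_i,v)$ arrows are absorbed into the factors $g_i$, the nonpositive arrows enter through the recursion defining $f_i$, and divisibility of $f_i$ by $\prod_{k\le i}g_k$ is what makes the S-polynomial computation close up. Without carrying this out (or some equivalent explicit family), the inequality $\dim_{L_h}\Hilb_S^h\ge d$ is unproven, and with it the proposition. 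Your point (c) on injectivity is likewise only checkable once the generators are written down; as stated it is a promissory note attached to an object you have not produced.
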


\begin{proof}
  Let $d$ be the number of significant arrows associated to $L_h$.  By
  Theorem~\ref{t:tangentBasis}, $d$ equals the dimension of the
  tangent space to $\Hilb_S^h$ at the point corresponding to $L_h$.
  Thus, it suffices to show that the dimension of $\Hilb_S^h$ at this
  point is at least $d$.  We accomplish this by constructing a map
  $\tau \colon \AA^d \to \Hilb_S^h$ in which $\tau(0)$ corresponds to
  $L_h$ and the dimension of the image is $d$.  Since $\Hilb_S^h$ is a
  fine moduli space, the map $\tau \colon \AA^d \to \Hilb_S^h$ is
  determined by an admissible ideal $I$ in $R := K[x,y]$ for $K:= \kk
  \bigl[ c_{u,v}^i : (i,u,v) \in T(M) \bigr]$.  We may regard $I$ as a
  family of ideals over the base $\AA^d = \Spec(K)$.

  We define the generators of $I$ recursively.  Since
  Proposition~\ref{p:lexmost} states $T_+(L_h) = \varnothing$, the
  significant arrows associated to $L_h$ are either nonpositive or
  have the form $(i,p_i,v)$.  For $1 \leq i \leq n$, consider $g_i :=
  y^{-q_{i-1}+q_i} + \textstyle\sum\nolimits_{(i,p_i,v) \in T(L_h)}
  c_{p_i,v}^i \, y^{v-q_{i-1}}$.  Since $(i,p_i,v) \in T(L_h)$, we
  have $x^{p_{i-1}}y^v \in M$, so $v \geq q_{i-1}$ and $g_i$ is a
  polynomial in $R = K[x,y]$.  Setting $f_n := \prod_{i=1}^n g_i$
  means $\initial_{\lex}(f_n) = \prod_{i=1}^n y^{-q_{i-1}+q_i} =
  y^{q_n} = x^{p_n}y^{q_n}$ because $p_n = q_0 = 0$.  Next, suppose
  that the polynomials $f_{i+1},\dots, f_n$ are defined, with
  $\prod_{k=1}^j g_k$ dividing $f_j$ for $i+1 \leq j \leq n$.  Given
  $(i,u,v) \in T_{\leq 0}(L_h)$, we have $x^uy^{v-q_i+q_{i+1}} \in
  L_h$, so the minimal monomial generator $x^{p_j}y^{q_j}$ divides
  $x^uy^{v-q_i+q_{i+1}}$ for some index $j$ such that $i < j \leq n$.
  Let $\varepsilon =\varepsilon(i,u,v) := \max\{ j :
  \text{$x^{p_j}y^{q_j}$ divides $x^uy^{v-q_i+q_{i+1}}$} \}$ and, for
  $0 \leq i < n$, define
  \[
  f_i := \frac{1}{g_{i+1}} \biggl( x^{p_i-p_{i+1}} f_{i+1} +
  \sum\limits_{(i,u,v) \in T_{\leq 0}(L_h)} c_{u,v}^i \,
  x^{u-p_{\varepsilon}}y^{v-q_i+q_{i+1}-q_{\varepsilon}}
  f_{\varepsilon} \biggr) \, .
  \]
  Since $\prod_{k=1}^{i+1} g_{k}$ divides $f_j$ for all $j>i$, it
  follows that $f_i \in R$ with $\prod_{k=1}^i g_k$ dividing $f_i$.
  Repeating this process, we can define $f_i \in R$ for $0 \leq i \leq
  n$.  Moreover, the equation $y^{-q_i+q_{i+1}} \initial_{\lex}(f_i) =
  \initial_{\lex}(g_{i+1}f_i) = x^{p_i-p_{i+1}}
  \initial_{\lex}(f_{i+1})$ establishes that $\initial_{\lex}(f_i) =
  x^{p_i}y^{q_i}$.  With this notation, we define the ideal $I :=
  \langle f_0, \dotsc, f_n \rangle \subseteq K[x,y]$.

  We next show that $\initial_{\lex} \bigl(I \otimes_K k(\mathfrak{p})
  \bigr) = L_h \otimes_K k(\mathfrak{p})$ where $k(\mathfrak{p}) :=
  K_{\mathfrak{p}}/\mathfrak{p} K_{\mathfrak{p}}$ is the residue field
  of the point $\mathfrak{p} \in \Spec(K)$.  Since the minimal
  generators of $L_h$ are the initial terms with respect to $\lex$ of
  the defining generators of $I(\mathfrak{p}) := I \otimes_K
  k(\mathfrak{p})$, it suffices to show that these generators form a
  Gr\"{o}bner basis.  By Buchberger's criterion
  \cite{Eisenbud}*{Exercise~15.19}, we need only prove that the
  S-polynomials reduce to zero modulo the generators of
  $I(\mathfrak{p})$ for pairs of generators corresponding to the
  minimal syzygies of $L_h$.  The minimal syzygies of a monomial ideal
  in $R(\mathfrak{p}) := R \otimes_K k(\mathfrak{p}) =
  k(\mathfrak{p})[x,y]$ are indexed by adjacent pairs of minimal
  generators; see \cite{MillerSturmfels}*{Proposition~3.1}.  The
  S-polynomial for the adjacent generators $f_{i-1},f_i$ is
  \begin{multline*}
    y^{-q_{i-1}+q_i}f_{i-1} - x^{p_{i-1}-p_i} f_i =
    (y^{-q_{i-1}+q_i}-g_i)f_{i-1} + \textstyle\sum\limits_{(i-1,u,v)
      \in T_{\leq 0}(L_h)} c_{u,v}^{i-1} \,
    x^{u-p_{\varepsilon}}y^{v-q_{i-1}+q_i-q_{\varepsilon}}
    f_{\varepsilon} \\
    =  \textstyle\sum\limits_{(i-1,u,v) \in T_{\leq 0}(L_h)}
    c_{u,v}^{i-1}
    x^{u-p_{\varepsilon}}y^{v-q_{i-1}+q_i-q_{\varepsilon}}
    f_{\varepsilon} \!-\! \Bigl( \textstyle\sum\limits_{(i,p_i,v) \in
      T(L_h)} \!\! c_{p_i,v}^i y^{v-q_{i-1}} \Bigr) f_{i-1} \, .
  \end{multline*}
  Since the initial terms of all the summands in the last expression
  are less than the monomial $y^{-q_{i-1}+q_i}
  \initial_{\lex}(f_{i-1})$, we conclude that this S-polynomial
  reduces to zero modulo the generators of $I(\mathfrak{p})$.  It
  follows from \cite{Eisenbud}*{Theorem~15.3} that $h \colon A \to
  \NN$ is the Hilbert function of $R(\mathfrak{p})/I(\mathfrak{p})$.

  We now use this to show that $I$ is admissible.  Since we have
  $\dim_{k(\mathfrak{p})} \bigl( R(\mathfrak{p})/I(\mathfrak{p})
  \bigr)_a = h(a)$ for all $\mathfrak{p} \in \Spec(K)$, Nakayama's
  Lemma implies that the $K_{\mathfrak{p}}$-module
  $(R_\mathfrak{p}/I_{\mathfrak{p}})_a$ requires at most $h(a)$
  generators.  However, the rank of the $K_{\mathfrak{p}}$-module
  $(R_\mathfrak{p}/I_{\mathfrak{p}})_a$ is also bounded above by 
  $\dim_{k(0)} \bigl( R(0)/I(0) \bigr)_a$ and the Hilbert function at
  the generic point $\langle 0 \rangle \in \Spec(K)$ also equals
  $h(a)$.  Hence, $(R/I)_a$ is a locally free $K$-module of constant
  rank $h(a)$ on $\Spec(K)$.  The map $\tau \colon \AA^d \to
  \Hilb_S^h$ determined by the admissible $R$-ideal $I$ is injective,
  so the dimension of the image is $d$.
\end{proof}

We conclude with the proof of the main result.

\begin{proof}[Proof of Theorem~\ref{t:main}]
  We first show that $\Hilb_S^h$ is nonsingular when $S = \kk[x,y]$
  and $\kk$ is a field.  An ideal in $S = \kk[x,y]$ with codimension
  greater than one has finite colength.  Hence we may assume, by
  Theorem~\ref{t:factoring}, that $|h| := \sum_{a \in A} h(a) <
  \infty$.  Given a closed point on $\Hilb_S^h$, Lemma~\ref{l:curve}
  shows that the Gr\"{o}bner degenerations of the corresponding ideal
  $I'$ give an irreducible rational curve on $\Hilb_S^h$ that contains
  the points corresponding to $I'$ and $\initial_{\xel}(I')$.  Since
  the dimension of the tangent space is upper semicontinuous, it
  suffices to demonstrate that each point on $\Hilb_S^h$ corresponding
  to a monomial ideal is nonsingular.  Theorem~\ref{t:connected}
  establishes that the points on $\Hilb_S^h$ corresponding to monomial
  ideals lie on a curve $C$ in which the irreducible components are
  associated to positive significant arrows.  It follows from
  Theorem~\ref{t:tangentBasis} that the dimension of the tangent space
  is weakly increasing as we move along $C$ from a point corresponding
  to a monomial ideal to the point corresponding to $L_h$.
  Proposition~\ref{p:lexmostsmooth} proves that the point
  corresponding to $L_h$ is nonsingular.  We conclude that dimension
  of the tangent space is constant along $C$ and $\Hilb_S^h$ is
  nonsingular.  Theorem~\ref{t:connected} also establishes that
  $\Hilb_S^h$ is connected, so it follows that $\Hilb_S^h$ is
  irreducible.

  To complete the proof, let $S = \ZZ[x,y]$ and let $\eta \colon
  \Hilb_S^h \to \Spec(\ZZ)$ be the canonical map.  To show $\eta$ is
  smooth, it suffices by \cite{EGA}*{Theorem~17.5.1} to demonstrate
  that $\eta$ is flat and, for each $\mathfrak{p} \in \Spec(\ZZ)$,
  that the fiber $\eta^{-1}(\mathfrak{p})$ is smooth over the perfect
  field $\ZZ_{\mathfrak{p}}/\mathfrak{p} \ZZ_{\mathfrak{p}}$.  Since
  each fiber $\eta^{-1}(\mathfrak{p})$ is
  $\Hilb_{\ZZ/\mathfrak{p}[x,y]}^{h}$ (for example, see
  \cite{HaimanSturmfels}*{Lemma~3.14}), combining the first paragraph
  with \cite{EGA}*{Corollaire~17.15.2} shows that each fiber is
  smooth.  The lex-most ideal on each fiber produces a section of
  $\eta$, which implies that $\eta$ is surjective.  The image of this
  section is irreducible, since $\Spec(\ZZ)$ is, and the first
  paragraph also shows the fibers $\eta^{-1}(\mathfrak{p})$ are all
  irreducible as well, so it follows that $\Hilb_S^h$ is irreducible.
  Hence, the underlying reduced scheme $(\Hilb_S^h)_{\text{red}}$ is
  irreducible and dominates $\Spec(\ZZ)$, so
  \cite{Hartshorne}*{Proposition~III.9.7} establishes that the
  canonical map $(\Hilb_S^h)_{\text{red}} \to \Spec(\ZZ)$ is flat.
  The fact that the nilradical of $\Hilb_S^h$ is the zero sheaf, so
  $\Hilb_S^h = (\Hilb_S^h)_{\text{red}}$, can then be deduced from the
  fact that each fiber $\eta^{-1}(\mathfrak{p})$ is reduced.
  Therefore, we conclude that $\Hilb_S^h$ is smooth and irreducible
  over $\ZZ$.
\end{proof}

\begin{bibdiv}
\begin{biblist}
	
\bib{AltmannSturmfels}{article}{
  author={Altmann, K.},
  author={Sturmfels, B.},
  title={The graph of monomial ideals},
  journal={J. Pure Appl. Algebra},
  volume={201},
  date={2005},
  number={1-3},
  pages={250--263},
}

\bib{Bourbaki}{book}{
  label={Bou},
  author={Bourbaki, N.},
  title={Commutative algebra. Chapters 1--7},
  series={Elements of Mathematics (Berlin)},
  publisher={Springer-Verlag},
  place={Berlin},
  date={1998},
}

\bib{CEVV}{article}{
  label={CEV$^{2}$},
  author={Cartwright, D.A.},
  author={Erman, D.},
  author={Velasco, M.},
  author={Viray, B.},
  title={Hilbert schemes of 8 points},
  note={(to appear in the Journal of Algebra \& Number Theory),
    available at
    \href{http://arxiv.org/abs/0803.0341}{arXiv:0803.0341} [math.AG]}
}

\bib{EGA}{article}{
  label={EGA},
  author={Grothendieck, A.},
  title={\'El\'ements de g\'eom\'etrie alg\'ebrique. IV. \'Etude
    locale des sch\'emas et des morphismes de sch\'emas, Quatri\`eme
    partie},
  journal={Inst. Hautes \'Etudes Sci. Publ. Math.},
  number={32},
  date={1967},
  pages={361},
}
	
\bib{Eisenbud}{book}{
  label={Eis},
  author={Eisenbud, D.},
  title={Commutative algebra},
  series={Graduate Texts in Mathematics},
  volume={150},
  publisher={Springer-Verlag},
  place={New York},
  date={1995},
  pages={xvi+785},
}

\bib{EisenbudHarris}{book}{
  author={Eisenbud, D.},
  author={Harris, J.},
  title={The geometry of schemes},
  series={Graduate Texts in Mathematics},
  volume={197},
  publisher={Springer-Verlag},
  place={New York},
  date={2000},
  pages={x+294},
}

\bib{Evain}{article}{
  label={Eva},
  author={Evain, L.},
  title={Irreducible components of the equivariant punctual Hilbert
    schemes},
  journal={Adv. Math.},
  volume={185},
  date={2004},
  number={2},
  pages={328--346},
}

\bib{Fogarty}{article}{
  label={Fog},
  author={Fogarty, J.},
  title={Algebraic families on an algebraic surface},
  journal={Amer. J. Math},
  volume={90},
  date={1968},
  pages={511--521},
}
		
\bib{Fumasoli}{article}{
  label={Fum},
  author={Fumasoli, S.},
  title={Hilbert scheme strata defined by bounding cohomology},
  journal={J. Algebra},
  volume={315},
  date={2007},
  number={2},
  pages={566--587},
}

\bib{HaimanCatalan}{article}{
  label={Hai},
  author={Haiman, M.},
  title={$t,q$-Catalan numbers and the Hilbert scheme},
  journal={Discrete Math.},
  volume={193},
  date={1998},
  number={1-3},
  pages={201--224},
}
	
\bib{HaimanSturmfels}{article}{
  author={Haiman, M.},
  author={Sturmfels, B.},
  title={Multigraded Hilbert schemes},
  journal={J. Algebraic Geom.},
  volume={13},
  date={2004},
  number={4},
  pages={725--769},
}

\bib{HartshorneThesis}{article}{
  label={H1},
  author={Hartshorne, R.},
  title={Connectedness of the Hilbert scheme},
  journal={Inst. Hautes \'Etudes Sci. Publ. Math.},
  number={29},
  date={1966},
  pages={5--48},
}

\bib{Hartshorne}{book}{
  label={H2},
  author={Hartshorne, R.},
  title={Algebraic geometry},
  series={Graduate Texts in Mathematics},
  volume={52},
  publisher={Springer-Verlag},
  place={New York},
  date={1977},
  pages={xvi+496},
}

\bib{Huibregtse}{article}{
  label={Hui},
  author={Huibregtse, M.E.},
  title={A description of certain affine open subschemes that form an
    open covering of ${\rm Hilb}\sp n\sb {{\bf A}\sp 2\sb {\rm k}}$},
  journal={Pacific J. Math.},
  volume={204},
  date={2002},
  number={1},
  pages={97--143},
}

\bib{Iarrobino}{article}{
  label={Iar},
  author={Iarrobino, A.A.},
  title={Punctual Hilbert schemes},
  journal={Mem. Amer. Math. Soc.},
  volume={10},
  date={1977},
  number={188},
  pages={viii+112},
}
	
\bib{Iversen}{article}{
  label={Ive},
  author={Iversen, B.},
  title={A fixed point formula for action of tori on algebraic
    varieties},
  journal={Invent. Math.},
  volume={16},
  date={1972},
  pages={229--236},
}

\bib{Kollar}{book}{
  author={Koll{\'a}r, J.},
  title={Rational curves on algebraic varieties},
  series={Ergebnisse der Mathematik und ihrer Grenzgebiete.},
  volume={32},
  publisher={Springer-Verlag},
  place={Berlin},
  date={1996},
  pages={viii+320},
}
			
\bib{M2}{article}{
  label={M2},
  author={Grayson, D.R.},
  author={Stillman, M.E.},
  title={Macaulay 2, a software system for research in algebraic
    geometry},
  eprint={www.math.uiuc.edu/Macaulay2/}
}

\bib{MS}{article}{
  label={MaS},
  author={Maclagan, D.},
  author={Smith, G.G.},
  title={Uniform bounds on multigraded regularity},
  journal={J. Algebraic Geom.},
  volume={14},
  date={2005},
  number={1},
  pages={137--164},
}
	
\bib{MaclaganThomas}{article}{
  author={Maclagan, D.},
  author={Thomas, R.R.},
  title={The toric Hilbert scheme of a rank two lattice is smooth and
    irreducible},
  journal={J. Combin. Theory Ser. A},
  volume={104},
  date={2003},
  number={1},
  pages={29--48},
}

\bib{Mall}{article}{
  label={Mal},
  author={Mall, D.},
  title={Connectedness of Hilbert function strata and other
    connectedness results},
  journal={J. Pure Appl. Algebra},
  volume={150},
  date={2000},
  number={2},
  pages={175--205},
}
	
\bib{MillerSturmfels}{book}{
  label={MiS},
  author={Miller, E.},
  author={Sturmfels, B.},
  title={Combinatorial commutative algebra},
  series={Graduate Texts in Mathematics},
  volume={227},
  publisher={Springer-Verlag},
  place={New York},
  date={2005},
  pages={xiv+417},
}

\bib{Pardue}{article}{
  label={Par},
  author={Pardue, K.},
  title={Deformation classes of graded modules and maximal Betti
    numbers},
  journal={Illinois J. Math.},
  volume={40},
  date={1996},
  number={4},
  pages={564--585},
}

\bib{PeevaStillmanConnected}{article}{
  author={Peeva, I.},
  author={Stillman, M.},
  title={Connectedness of Hilbert schemes},
  journal={J. Algebraic Geom.},
  volume={14},
  date={2005},
  number={2},
  pages={193--211},
}
	
\bib{Reeves}{article}{
  label={Ree},
  author={Reeves, A.},
  title={The radius of the Hilbert scheme},
  journal={J. Algebraic Geom.},
  volume={4},
  date={1995},
  number={4},
  pages={639--657},
}

\bib{ReevesStillman}{article}{
  author={Reeves, A.},
  author={Stillman, M.},
  title={Smoothness of the lexicographic point},
  journal={J. Algebraic Geom.},
  volume={6},
  date={1997},
  number={2},
  pages={235--246},
}

\bib{Santos}{article}{
  label={San},
  author={Santos, F.},
  title={Non-connected toric Hilbert schemes},
  journal={Math. Ann.},
  volume={332},
  date={2005},
  number={3},
  pages={645--665},
}

\bib{Sturmfels}{article}{
  label={Stu},
  author={Sturmfels, B.},
  title={On vector partition functions},
  journal={J. Combin. Theory Ser. A},
  volume={72},
  date={1995},
  number={2},
  pages={302--309},
}

\bib{Vakil}{article}{
  label={Vak},
  author={Vakil, R.},
  title={Murphy's law in algebraic geometry: badly-behaved deformation
    spaces},
  journal={Invent. Math.},
  volume={164},
  date={2006},
  number={3},
  pages={569--590},
}

\bib{Yameogo}{article}{
  label={Yam},
  author={Yam{\'e}ogo, J.},
  title={D\'ecomposition cellulaire de vari\'et\'es param\'etrant des
    id\'eaux homog\`enes de ${\bf C}[\![x,y]\!]$. Incidence des
    cellules. I},
  journal={Compositio Math.},
  volume={90},
  date={1994},
  number={1},
  pages={81--98},
}
		
\end{biblist}
\end{bibdiv}

\raggedright
 
\end{document}